\newcommand{\mylabel}[2]{#2\def\@currentlabel{#2}\label{#1}}
\definecolor{vegasgold}{rgb}{0.77, 0.7, 0.35}
\definecolor{darkgoldenrod}{rgb}{0.72, 0.53, 0.04}
\definecolor{gold(metallic)}{rgb}{0.83, 0.69, 0.22}
\newtheorem{lthm}{Theorem}
\DeclareFontFamily{U}{wncy}{}
\DeclareFontShape{U}{wncy}{m}{n}{<->wncyr10}{}
\DeclareSymbolFont{mcy}{U}{wncy}{m}{n}
\DeclareMathSymbol{\Sh}{\mathord}{mcy}{"58}
\newtheorem{theorem}{Theorem}[section]
\newtheorem{lemma}[theorem]{Lemma}
\newtheorem{ass}[theorem]{Assumption}
\newtheorem{definition}[theorem]{Definition}
\newtheorem{corollary}[theorem]{Corollary}
\newtheorem{remark}[theorem]{Remark}
\newtheorem{conjecture}[theorem]{Conjecture}
\newtheorem{proposition}[theorem]{Proposition}
\newcommand{\Rfine}[2]{\mathcal{R}_{p^\infty}(#1/#2)}
\newcommand{\cK}{\mathcal{K}}
\newcommand{\cH}{\mathcal{H}}
\newcommand{\Z}{\mathbb{Z}}
\newcommand{\Q}{\mathbb{Q}}
\newcommand{\F}{\mathbb{F}}
\newcommand{\cO}{\mathcal{O}}
\newcommand{\op}[1]{\operatorname{#1}}
\newcommand\mtx[4] { \left( {\begin{array}{cc}
 #1 & #2 \\
 #3 & #4 \\
 \end{array} } \right)}
\numberwithin{equation}{section}
\begin{document}

\title[Massey products and Iwasawa theory]{Massey products and the Iwasawa theory of fine Selmer groups}

\author[A.~Ray]{Anwesh Ray\, \orcidlink{0000-0001-6946-1559}}
\address[Ray]{Chennai Mathematical Institute, H1, SIPCOT IT Park, Kelambakkam, Siruseri, Tamil Nadu 603103, India}
\email{anwesh@cmi.ac.in}

\author[R.~Sujatha]{R.~Sujatha\, \orcidlink{0000-0003-1221-0710}}
\address[Sujatha]{Department of Mathematics \\ University of British Columbia \\
  Vancouver BC, V6T 1Z2, Canada.} 
  \email{sujatha@math.ubc.ca} 

\keywords{}
\subjclass[2020]{11R23, 11F80}

\maketitle

\begin{abstract}
 A central conjecture of Coates and Sujatha predicts that the fine Selmer group of any $p$-adic Galois representation is cotorsion over the relevant Iwasawa algebra with vanishing $\mu$-invariant, generalizing Iwasawa’s original conjecture for class groups. In this article, we recast this conjecture in terms of higher Galois cohomological operations called Massey products, stated purely in terms of the residual representation. This characterization implies that if the $\mu$-invariant vanishes for a given $\Z_p$-extension, then it also vanishes for all $\Z_p$-extensions the Greenberg neighbourhood of radius $1/p$. Furthermore, we establish that the unobstructedness of ordinary deformation rings over $\Z_p$-extensions is equivalent to the vanishing of the $\mu$-invariant of the fine Selmer group attached to the adjoint representation. For ordinary deformation rings attached to $2$-dimensional automorphic Galois representations, we establish a close connection between the Noetherian property and the vanishing of the $\mu$-invariant.
\end{abstract}

\section{Introduction}
\par Given a number field $K$ and a prime $p$, the pioneering work of Iwasawa \cite{IwasawaAnnals} initiated a systematic study of the growth of $p$-primary class groups in infinite towers of number fields. If $K_\infty/K$ is a $\Z_p$-extension with finite layers $K_n$, then the inverse limit of the $p$-primary parts of the class groups, $X(K_\infty):=\varprojlim_n \op{Cl}(K_n)[p^\infty]$, is naturally a module over the Iwasawa algebra which is a power series ring in one variable over $\Z_p$. Via this algebraic perspective, Iwasawa proved that for sufficiently large values of $n$, the growth of the $p$-part of the class group is governed by the formula
\[\#\op{Cl}(K_n)[p^\infty] = p^{p^n\mu + n\lambda + \nu},\]
\noindent where $\mu, \lambda \geq 0$ and $\nu \in \Z$ are invariants attached to the extension $K_\infty/K$. Among these, the $\mu$-invariant measures the depth of $p$-power divisibility in the characteristic power series of $X(K_\infty)$. Iwasawa conjectured that in the cyclotomic $\Z_p$-extension of any number field, $\mu=0$. This conjecture is established by Ferrero and Washington \cite{ferrerowashington} when $K/\Q$ is abelian, however is wide open in general. 

Over the decades, Iwasawa theory has been studied in a broader framework. In this context, Selmer groups encode arithmetic information attached to elliptic curves, modular forms, and more general motives which give rise to Galois representations. Their structure as modules over Iwasawa algebras is conjecturally governed by analytic invariants of $p$-adic $L$-functions. Within this framework, the fine Selmer group occupies a particularly subtle yet important position. By construction, it is obtained from the usual $p$-primary Selmer group by imposing the strictest possible local conditions at the primes above $p$, thereby encoding only the most delicate global arithmetic information. Coates and Sujatha conjectured \cite[Conjecture A]{CoatesSujatha} that the fine Selmer group is always a cotorsion module over the relevant Iwasawa algebra, and moreover that its $\mu$-invariant vanishes. This may be viewed as a far-reaching generalization of Iwasawa’s original $\mu=0$ conjecture for class groups.

\par In recent years, increasing attention has turned to the relationship between Iwasawa theory and explicit operations in Galois cohomology. Classical cup products already play a significant role in controlling Selmer groups, but their higher-order generalizations called Massey products have emerged as a powerful tool for uncovering refined arithmetic structure. The breakthrough work of McCallum and Sharifi \cite{McCallumSharifi} demonstrated explicit connections between cup products in Galois cohomology and Iwasawa $\lambda$-invariants of cyclotomic fields. Massey products offer a natural language for encoding deeper structural information in Iwasawa theory (see, for example, \cite{SharifiMassey, qi2025iwasawa}). They also naturally arise in various other related contexts. For instance, Massey products have been shown to capture information concerning the structure and rank of the Eisenstein ideal \cite{WWE}.

The present article develops these themes by integrating the structure theory of Iwasawa cohomology with the generalized Bockstein maps introduced in the work of Lam, Liu, Sharifi, Wake, and Wang \cite{Lametal}. Our main result shows that the Conjecture A in \cite{CoatesSujatha} can be reformated in terms of Massey products and can be summarized as follows.

\begin{lthm}[Theorem \ref{main thm of paper}]
Let $K$ be a number field and $K_\infty/K$ a $\Z_p$-extension, and let $S$ be a finite set of primes containing those above $p$. Assume that all primes $v \mid p$ of $K$ are totally ramified in $K_\infty$, and that all primes in $S$ are finitely decomposed in $K_\infty$. Let $\cK/\Q_p$ be a finite extension with valuation ring $\cO$, and let $\rho:\op{Gal}(\bar{K}/K)\to \op{GL}_d(\cO)$ be a continuous Galois representation which is unramified outside $S$. Let $\bar{\rho}$ be the residual representation of $\rho$, and $\rho^*$ denote its Tate-dual. Let \[\kappa:\op{Gal}(\bar{K}/K)\twoheadrightarrow \op{Gal}(K_\infty/K)\xrightarrow{\sim} \Z_p\]
\noindent denote the canonical surjection. Then the following are equivalent:
\begin{enumerate}
\item The fine Selmer group of $\rho$ over $K_\infty$ is $\Lambda$-cotorsion with vanishing $\mu$-invariant.
\item For some $n>0$, the Massey products $(\underbrace{\kappa, \kappa, \dots, \kappa}_{n-\text{times}}, \lambda)_\tau$ span $H^2(K_S/K,\bar{\rho}^*)$, where $\tau$ ranges over certain defining systems and
$\lambda\in H^1(K_S/K, \bar{\rho}^*)
    $
(subsection \ref{section 6.2 Massey} and Theorem \ref{main thm of paper} makes this precise).
\end{enumerate}
\end{lthm}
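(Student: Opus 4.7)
The plan is to translate statement (1) into a concrete condition on an Iwasawa cohomology module via Poitou--Tate duality, to apply the structural characterization of vanishing $\mu$-invariants through iterated generalized Bockstein maps furnished by \cite{Lametal}, and finally to reinterpret these Bockstein maps as the Massey products appearing in (2) using the identification of \cite{Lametal,SharifiMassey}.

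First I would reformulate (1) using Poitou--Tate duality over the tower. Fix a Galois-stable $\cO$-lattice $T \subset \rho$ and let $T^*$ be its Tate dual. Under the total ramification hypothesis at primes above $p$ and the finite decomposition hypothesis at primes in $S$, the Pontryagin dual of the fine Selmer group $\Rfine{\rho}{K_\infty}$ is $\Lambda$-isomorphic, up to pseudo-null error, to the second Iwasawa cohomology $H^2_{\mathrm{Iw}}(K_S/K_\infty, T)$. Hence (1) becomes the statement that $H^2_{\mathrm{Iw}}(K_S/K_\infty, T)$ is $\Lambda$-torsion with $\mu = 0$. By the standard Nakayama-style characterization of vanishing $\mu$, this is in turn equivalent to $H^2_{\mathrm{Iw}}(K_S/K_\infty, T)/p$ being finitely generated over $\F_p$.

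Next I would analyse this quotient through the derived framework of \cite{Lametal}. The passage from $H^\ast_{\mathrm{Iw}}(K_S/K_\infty, T)/p$ back to $H^\ast(K_S/K, \bar{\rho})$, via the surjection $\kappa$, produces a spectral sequence whose differentials are precisely the generalized Bockstein maps $\beta^{(n)}$ associated to $\kappa$. The finiteness of $H^2_{\mathrm{Iw}}(K_S/K_\infty, T)/p$ over $\F_p$ then translates into the statement that, at some stage $n$, the union of the images of $\beta^{(n)}$ (over all choices of the auxiliary data recording how lower-order obstructions vanish) covers $H^2(K_S/K, \bar{\rho})$. Dualising via Poitou--Tate, one obtains the equivalent condition that certain maps $H^1(K_S/K, \bar{\rho}^*) \to H^2(K_S/K, \bar{\rho}^*)$ are jointly surjective, with indexing matching (2). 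The final step is then to apply the comparison result of \cite{Lametal} identifying $\beta^{(n)}(\lambda)$ with the Massey product $(\kappa, \ldots, \kappa, \lambda)_\tau$ (with $n$ copies of $\kappa$), where the defining system $\tau$ records exactly the auxiliary data needed to define the higher Massey product. Summing over all admissible $\tau$ realises condition (2) and completes the equivalence.

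The hardest step will be managing defining systems consistently: higher Massey products are multivalued, with indeterminacy controlled by lower-order Massey products, and matching this indeterminacy exactly with the images of $\beta^{(m)}$ for $m < n$ so that the spans agree is the main delicate point. A second point requiring care is the verification that the total-ramification and finite-decomposition hypotheses suffice to ensure that the local terms at primes in $S$ not above $p$ contribute only pseudo-null modules, so that they do not enter the $\mu$-invariant computation; this follows from standard Iwasawa-theoretic local cohomology arguments, but must be made compatible with the spectral sequence of \cite{Lametal}. A subsidiary check is that the Tate-duality pairing is compatible with the $\Lambda$-module structure at each page of that spectral sequence, which follows from the functoriality of Poitou--Tate duality but needs to be tracked explicitly in order to align the Iwasawa-theoretic and the Massey-product sides.
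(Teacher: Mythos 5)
Your overall architecture matches the paper's: translate (1) into finiteness of a mod-$p$ Iwasawa cohomology group via Poitou--Tate, characterize that finiteness by stabilization of the $I$-adic filtration (Nakayama), and then invoke the Lam--Liu--Sharifi--Wake--Wang identification of the graded pieces of $H^2_{\op{Iw}}$ with $H^2(G,T)\otimes I^n/I^{n+1}$ modulo the span of Massey products relative to the binomial defining systems. The ``hardest step'' you flag --- matching the indeterminacy of defining systems with images of lower Bockstein maps --- is in fact already packaged in the cited reference (Corollary 4.3.3 of \cite{Lametal}, quoted as Corollary \ref{filtration corollary} in the paper): the subgroup $P_n(H)$ is by definition the span over \emph{all} proper defining systems, so no further bookkeeping is required. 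Likewise, your worry about pseudo-null local contributions is handled more directly in the paper: since all $v\in S$ are finitely decomposed, the outer terms of the residual Poitou--Tate sequence \eqref{PTeqn} are literally finite, so one gets an equivalence of finiteness statements rather than a pseudo-isomorphism.

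The one genuine problem is a coefficient mix-up in your dualization step. The Iwasawa cohomology group controlling the fine Selmer group of $\mathbf{A}(\rho)$ is $H^2_{\op{Iw}}(K_S/K_\infty,\mathbf{V}_{\bar{\rho}^*})$ --- the \emph{Tate dual}, with mod-$p$ coefficients --- not $H^2_{\op{Iw}}(K_S/K_\infty,T)$ for the lattice $T$ of $\rho$ itself. You must feed $T=\mathbf{V}_{\bar{\rho}^*}$ into the Bockstein/Massey machine from the start, so that the products land directly in $H^2(K_S/K,\mathbf{V}_{\bar{\rho}^*})$ as in condition (2). Your proposed remedy --- running the machine with $\bar{\rho}$-coefficients and then ``dualising via Poitou--Tate'' to obtain a spanning condition in $H^2(K_S/K,\bar{\rho}^*)$ --- is not a valid step: global duality relates $H^2$ of a module to $\Sh^1$ of its dual and to local terms, and does not transport the Bockstein filtration on $H^2(G,\bar{\rho})$ to one on $H^2(G,\bar{\rho}^*)$. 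A secondary, fixable point: you pass through $H^2_{\op{Iw}}(K_S/K_\infty,T)/p$ rather than $H^2_{\op{Iw}}(K_S/K_\infty,T/p)$; these differ by a torsion term from the long exact sequence of $T\xrightarrow{p}T\to T/p$, and the paper avoids this comparison by working with residual coefficients throughout (the Kummer-sequence diagram in the proof of Theorem \ref{Conj A criterion}). With the coefficients corrected and the spurious second dualization deleted, your argument collapses to the paper's proof.
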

\par A striking consequence of the above result is that if condition (1) is satisfied for $\rho$ over $K_\infty$, then it is satisfied for a uniform large Greenberg neighbourhood of $\Z_p$-extensions of $K_\infty$. Indeed, if there are two $\Z_p$-extensions $K_\infty^1$ and $K_\infty^2$ which agree at the first layer, then (1) holds for $\rho$ over $K_\infty^1$ if and only if (1) holds for $\rho$ over $K_\infty^2$.
\begin{lthm}\label{thmb}[Theorem \ref{greenberg neighbourhood theorem}]
  Let $K$ be a number field, $S$ a finite set of primes of $K$ containing those above $p$, and $\rho:\op{G}_{K,S}\to \op{GL}_d(\cO)$ a continuous Galois representation. Let $K_\infty^1, K_\infty^2$ be $\Z_p$-extensions of $K$ such that:
  \begin{enumerate}
\item all $v|p$ are totally ramified and all $v\in S$ are finitely decomposed in in $K_\infty^i$, for $i=1,2$,
\item $K_1^1=K_1^2$.
\end{enumerate}
Then if the fine Selmer group for $\rho$ over $K_\infty^1$ is cotorsion with $\mu=0$, then the same holds for the fine Selmer group for $\rho$ over $K_\infty^2$.
\end{lthm}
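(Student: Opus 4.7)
The plan is to deduce the theorem directly from the Massey product reformulation of condition~(1) provided by Theorem~\ref{main thm of paper}. For $i=1,2$ let $\kappa_i:\op{G}_{K,S}\twoheadrightarrow\op{Gal}(K_\infty^i/K)\xrightarrow{\sim}\Z_p$ denote the canonical surjection. Hypothesis~(1) of the theorem guarantees that the ramification and decomposition assumptions of Theorem~\ref{main thm of paper} are satisfied for each $K_\infty^i$, so that theorem reduces cotorsionness with $\mu=0$ of the fine Selmer group for $\rho$ over $K_\infty^i$ to the statement that the Massey products $(\kappa_i,\dots,\kappa_i,\lambda)_\tau$ span $H^2(K_S/K,\bar{\rho}^*)$ as $\lambda$ and $\tau$ range over the prescribed set described in Section~\ref{section 6.2 Massey}.

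The crucial observation is that the coefficient module $\bar{\rho}^*$ is annihilated by the uniformizer of $\cO$, so the Massey product $(\kappa_i,\dots,\kappa_i,\lambda)_\tau$ depends on $\kappa_i$ only through its residual class $\bar\kappa_i\in H^1(K_S/K,\F_p)$. The hypothesis $K_1^1=K_1^2$ forces the two surjections $\bar\kappa_1,\bar\kappa_2:\op{G}_{K,S}\to\F_p$ to share a common kernel (namely the absolute Galois group of the common first layer), so they differ by an automorphism of $\F_p$; that is, there exists $c\in\F_p^\times$ with $\bar\kappa_2=c\cdot\bar\kappa_1$.

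Next, I would invoke the essential multilinearity of the Massey product construction at the level of cochains. Given a defining system $\tau$ for $(\bar\kappa_1,\dots,\bar\kappa_1,\lambda)$, rescaling the relevant cochains by appropriate powers of $c$ produces a defining system $\tau'$ for $(\bar\kappa_2,\dots,\bar\kappa_2,\lambda)$, and the resulting classes satisfy
\[(\bar\kappa_2,\dots,\bar\kappa_2,\lambda)_{\tau'}=c^n\cdot(\bar\kappa_1,\dots,\bar\kappa_1,\lambda)_\tau\]
in $H^2(K_S/K,\bar{\rho}^*)$. Since multiplication by $c^n\in\F_p^\times$ is an automorphism of $H^2(K_S/K,\bar{\rho}^*)$, the $\F_p$-spans of the two families of Massey products coincide. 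Therefore condition~(2) of Theorem~\ref{main thm of paper} holds for $\kappa_1$ if and only if it holds for $\kappa_2$, and applying Theorem~\ref{main thm of paper} in the opposite direction to $K_\infty^2$ yields the required implication (and, in fact, an equivalence stronger than the stated one).

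The point I expect to require the most care is the matching of defining systems in the penultimate step: higher Massey products are only well defined modulo indeterminacies governed by the choice of defining system, so one must verify that the rescaling $\bar\kappa_1\mapsto c\bar\kappa_1$ induces a bijection between the admissible defining systems singled out in Section~\ref{section 6.2 Massey} and that this bijection rescales the output cleanly by $c^n$. This should follow from the cochain-level multilinearity of the generalized Bockstein/Massey construction of \cite{Lametal}, but it is the one piece of bookkeeping that must be made explicit before the rest of the argument becomes purely formal.
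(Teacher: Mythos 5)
Your argument follows the same route as the paper's proof: apply Theorem \ref{main thm of paper} to each extension and reduce to showing that condition (2) there is insensitive to replacing $K_\infty^1$ by $K_\infty^2$. The one step you add --- the rescaling argument for $\bar\kappa_2=c\,\bar\kappa_1$ --- is precisely the step you flag as delicate, and as written it does not go through. The defining systems appearing in condition (2) are not arbitrary: they are the proper defining systems relative to the unipotent binomial matrix $\left[\begin{smallmatrix}\kappa\\ n\end{smallmatrix}\right]$, and this matrix does not transform multilinearly under $\kappa\mapsto c\kappa$, because $\binom{cm}{k}\neq c^k\binom{m}{k}$ in general; in particular $\left[\begin{smallmatrix}c\kappa\\ n\end{smallmatrix}\right]$ is not a diagonal conjugate of $\left[\begin{smallmatrix}\kappa\\ n\end{smallmatrix}\right]$, so ``rescaling the relevant cochains by powers of $c$'' does not visibly carry the admissible defining systems for $\kappa_1$ to those for $\kappa_2$ with output multiplied by $c^n$. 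The clean fix is to remove the scalar at the source: each $\kappa_i$ depends on a choice of topological generator of $\Gamma_i$, and neither condition of Theorem \ref{main thm of paper} depends on that choice (condition (1) manifestly does not, hence neither does condition (2)); since $\Gamma_i$ surjects onto $\op{Gal}(K_1/K)$, one may choose the two generators to have the same image there, which forces $\kappa_1\equiv\kappa_2\pmod p$ exactly. This is what the paper's proof implicitly does, so your $c^n$-bookkeeping can be avoided entirely.

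One further caveat, which your write-up shares with the paper's proof: the fact that the coefficients are $\F_p$-modules only controls the values of the $2$-cocycles mod $p$. The partial defining system itself involves the entries $\binom{\kappa(g)}{k}\bmod p$ for $k\le n$, and for $k\ge p$ these depend (by Lucas's theorem) on $\kappa(g)$ modulo $p^2$ and higher, not only on $\bar\kappa$. Hence the assertion that the Massey-product criterion depends only on the first layer is not literally immediate when the witnessing $n$ is $\ge p$, and a fully rigorous version of either proof should address why the spans $P_n$ nonetheless coincide (or work with $n<p$).
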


\par We also explore the broader consequences of Conjecture A of Coates and Sujatha \cite{CoatesSujatha} for the deformation theory of Galois representations considered over a $\Z_p$-tower. Proposition \ref{unobstructedness for def rings} establishes that the unobstructedness of certain deformation rings over $K_\infty$ is equivalent to the vanishing of the $\mu$-invariant of the fine Selmer group attached to the Tate-twist of the adjoint representation. This result illustrates a bridge between two a priori distinct frameworks: the structure theory of Iwasawa modules and the geometry of deformation spaces. 
The study of deformation theory in infinite towers of number fields has its origins in the work of Hida \cite{Hida}, who investigated deformations of $p$-adic Galois representations over the cyclotomic $\Z_p$-extension. Burungale--Clozel \cite{BCM1} and Burungale--Clozel--Mazur \cite{BCM2}, further analyzed ordinary deformations over $p$-adic Lie extensions. To recall their setting, let $\F_q$ be a finite field with ring of Witt vectors $\op{W}(\F_q)$, let $F$ be a totally real number field, and let $F_{\op{cyc}}$ denote its cyclotomic $\Z_p$-extension. Suppose that $\bar{r}:\op{Gal}(\bar{F}/F)\to \op{GL}_2(\F_q)$ is a $p$-ordinary residual representation satisfying certain natural hypotheses. Then Burungale and Clozel show that the ordinary deformation problem for $\bar{r}|_{\op{Gal}(\bar{F}/F_{\op{cyc}})}$ is pro-representable by a universal deformation ring $R_\infty$, which arises as the inverse limit $\varprojlim_n R_n$ of the ordinary deformation rings $R_n$ associated to the finite layers $F_n$ of $F_{\op{cyc}}/F$. Each $R_n$ is a Noetherian $\op{W}(\F_q)$-algebra, but this does not automatically guarantee that $R_\infty$ itself is Noetherian. The main theorem of \cite{BCM1} shows that, provided $R_\infty$ is Noetherian and certain additional conditions are satisfied, one can in fact identify $R_\infty$ with a power series ring of the form $
\op{W}(\F_q)\llbracket T_1,\dots,T_s\rrbracket
$. Burungale--Clozel--Mazur later demonstrate that $s$ can in fact be arbitrarily large. In Proposition \ref{mainpropsection5}, we prove that in this setting the universal deformation ring $R_\infty$ is Noetherian if and only if the $\mu$-invariant of the Greenberg Selmer group of the adjoint representation of $\rho$ vanishes. Corollary \ref{cor5.8} asserts that $R_\infty$ is a formal power series ring in a finite number of variables if and only of the $\mu$-invariant of the adjoint representation is zero.
\par Beyond the immediate implications for the geometry of deformation rings, this perspective also suggests the possibility of more refined results, since Massey products in Galois cohomology provide a powerful framework for probing the structure of Galois groups. Given that deformation problems are governed by obstruction classes, it is natural to expect that Massey products should play a significant role in their description. We believe that the connections uncovered in this work will serve as a starting point for future developments along these lines.

\subsection*{Acknowledgement} The second named author gratefully acknowledges support from NSERC Discovery grant 2019-03987.

\section{Cup products and Iwasawa theory of class groups}
\par Let $K$ be a number field. Fix an algebraic closure $\overline{K}$ of $K$, and set $\operatorname{G}_K := \operatorname{Gal}(\overline{K}/K)$. Let $\Omega_K$ denote the set of finite primes of $K$. For each $v \in \Omega_K$, fix an algebraic closure $\overline{K_v}$ of $K_v$, and write $\operatorname{G}_{K_v} := \operatorname{Gal}(\overline{K_v}/K_v)$. For $n\geq 1$, let $\mu_{p^n}\subset \overline{K}$ denote the $p^n$-th roots of unity and set $\mu_{p^\infty}:=\bigcup_{n\geq 1} \mu_{p^n}$. The field $K(\mu_{p^\infty})=\bigcup_{n\geq 1} K(\mu_{p^n})$ is a $\Z_p$-extension of $K(\mu_p)$. The cyclotomic $\Z_p$-extension of $K$ is the unique $\Z_p$-extension $K_{\op{cyc}}/K$ which is contained in $K(\mu_{p^\infty})$.

\par Let $K_\infty/K$ be a $\Z_p$-extension. For $n\geq 0$, let $K_n\subset K_\infty$ be the subfield such that $\op{Gal}(K_n/K)\simeq \Z/p^n \Z$. Thus we have a tower of Galois extensions:
\[K=K_0\subset K_1\subset \dots \subset K_n \subset K_{n+1}\subset \dots \subset K_\infty=\bigcup_{n\geq 0} K_n,\] and refer to $K_n$ as the \emph{$n$-th layer}.
Set $\Gamma:=\op{Gal}(K_\infty/K)$ and identify $\op{Gal}(K_n/K)$ with $\Gamma/\Gamma^{p^n}$. The Iwasawa algebra $\Lambda$ is defined to be inverse limit of finite group rings: \[\Lambda:=\Z_p\llbracket \Gamma \rrbracket =\varprojlim_n \Z_p[\Gamma/\Gamma^{p^n}]=\varprojlim_n \Z_p[\op{Gal}(K_n/K)].\] Let $\gamma$ be a topological generator of $\Gamma$ and set $T:=\gamma-1\in \Lambda$. Then, $\Lambda$ is identified with the formal power series ring $\Z_p\llbracket T\rrbracket$. Given a finitely generated $\Lambda$-module $\mathfrak{M}$, there exists a $\Lambda$-module homomorphism

$$
\mathfrak{M} \longrightarrow \Lambda^\alpha \oplus \left( \bigoplus_{i=1}^s \Lambda / (p^{\mu_i}) \right) \oplus \left( \bigoplus_{j=1}^t \Lambda / (f_j(T)) \right)
$$
\noindent with finite kernel and cokernel (see \cite[Ch.~13]{introcycfields}). Here, $\alpha:= \operatorname{rank}_\Lambda \mathfrak{M}$, and each $f_j(T)$ is a \emph{distinguished polynomial}, that is, a monic polynomial in $\Z_p[T]\subset \Lambda$ whose non-leading coefficients are all divisible by $p$. Note that $\mathfrak{M}$ is a torsion $\Lambda$-module if and only if $\alpha=0$. The $\mu$-invariant of $\mathfrak{M}$ is defined as the sum $\sum_{i=1}^s \mu_i$, and taken to be $0$ when $s=0$. On the other hand, the $\lambda$-invariant is given by 
\[\lambda(\mathfrak{M}):=\sum_{j=1}^t \op{deg} f_j(T).\]
\noindent Suppose $\mathfrak{M}$ is a torsion $\Lambda$-module. Then it is straightforward to verify that $\mathfrak{M}$ is finitely generated as a $\Z_p$-module if and only if $\mu(\mathfrak{M}) = 0$. In particular, $\mu(\mathfrak{M}) = 0$ if and only if the quotient $\mathfrak{M}/p\mathfrak{M}$ is finite. In this case, the $\lambda$-invariant of $\mathfrak{M}$ equals the $\Z_p$-rank of $\mathfrak{M}$ and satisfies
\[\lambda(\mathfrak{M}) = \operatorname{rank}_{\Z_p} \mathfrak{M} \leq \dim_{\F_p}(\mathfrak{M}/p\mathfrak{M}),\]
\noindent with equality if and only if $\mathfrak{M}$ contains no nontrivial finite $\Lambda$-submodules.

\par Given any $\Lambda$-module $\mathfrak{M}$, let $\mathfrak{M}^\vee$ denote its Pontryagin dual $\op{Hom}_{\Z_p}(\mathfrak{M}, \Q_p/\Z_p)$. We say that $\mathfrak{M}$ is \emph{cofinitely generated} (resp. \emph{cotorsion}) if $\mathfrak{M}^\vee$ is finitely generated (resp. torsion) as a $\Lambda$-module. When the context makes this unambiguous, we shall write $\mu(\mathfrak{M})$ (resp. $\lambda(\mathfrak{M})$) to mean the $\mu$-invariant (resp. $\lambda$-invariant) of $\mathfrak{M}^\vee$. This is the convention we adopt for fine Selmer groups over $\Z_p$-extensions, which are the main objects of interest in this paper.

\par Let $S_p(K)$ denote the set of primes of $K$ lying above $p$, and let $S$ be a finite set of primes of $K$ containing $S_p(K)$. Given an algebraic extension $F/K$, denote by $S(F)$ the set of primes of $F$ lying above primes in $S$. Note that all primes of $K$ are finitely decomposed in $K_{\op{cyc}}$, and thus $S(K_{\op{cyc}})$ is a finite set of primes. On the other hand, for an arbitrary $\Z_p$-extension $K_\infty/K$, the set $S(K_\infty)$ may be infinite. For instance, this is indeed the case when $K$ is an imaginary quadratic field, $K_\infty$ is the anticyclotomic $\Z_p$-extension of $K$ and $S$ contains a prime $v\nmid p$ which is inert over $\Q$. Let $\operatorname{Cl}_S(F)$ denote the $S$-class group of $F$, and let $H_p^S(F)$ be the maximal abelian $p$-extension of $F$ in which all primes in $S(F)$ split completely. By class field theory, there is a natural isomorphism

$$
\operatorname{Cl}_S(F)[p^\infty] \simeq \operatorname{Gal}(H_p^S(F)/F).
$$
\noindent Define $H_p^S(K_\infty) := \bigcup_n H_p^S(K_n)$ and set $
\mathcal{X}^S(K_\infty) := \operatorname{Gal}(H_p^S(K_\infty)/K_\infty)$.
\noindent Note that 
\[\mathcal{X}^S(K_\infty)\simeq \varprojlim_n \op{Gal}(H_p^S(K_n)/K_n)\simeq \varprojlim_n \op{Cl}_S(F)[p^\infty],\] where the last inverse limit is taken with respect to norm maps. For ease of notation, we set $\mathcal{X}(K_\infty):=\mathcal{X}^\emptyset(K_\infty)$. Iwasawa \cite{IwasawaAnnals} proved that $\mathcal{X}(K_\infty)$ is a torsion $\Lambda$-module and that there are constants $\mu\in \Z_{\geq 0}$, $\lambda\in \Z_{\geq 0}$ and $\nu\in \Z$ such that for all sufficiently large values of $n$, 
\[\# \op{Cl}(K_n)[p^\infty]=p^{p^n \mu+n \lambda+\nu},\] where $\mu=\mu(\mathcal{X}(K_\infty))$ and $\lambda=\lambda(\mathcal{X}(K_\infty))$. The $\nu$-invariant on the other hand is an integer which is possibly negative. Moreover, for the cyclotomic $\Z_p$-extension, the $\mu$-invariant of $\mathcal{X}(K_{\op{cyc}})$ is conjectured to be zero. Ferrero and Washington \cite{ferrerowashington} proved this when $K/\Q$ is an abelian extension. 

\par We now specialize to the case where $K = \Q(\mu_p)$ is the $p$-th cyclotomic field and $K_\infty = \Q(\mu_{p^\infty}) = K_{\operatorname{cyc}}$ is its cyclotomic $\Z_p$-extension. In this setting, the Herbrand–Ribet theorem relates the nontriviality of certain eigenspaces of the $p$-part of the class group of $K$ to the divisibility of Bernoulli numbers by $p$. The subgroup $C_K \subset \Z[\mu_p, 1/p]^\times$ of cyclotomic $p$-units embeds via Kummer theory into Galois cohomology, where cup product pairings capture subtle information about class groups. Building on this, McCallum and Sharifi \cite{McCallumSharifi} introduced a pairing on cyclotomic $p$-units that relates the nontriviality of class group eigenspaces to the behavior of certain cup products in Galois cohomology. They further showed that the Iwasawa $\lambda$-invariants governing the growth of class groups in the cyclotomic tower can be characterized in terms of the vanishing of these cup products. We now describe this connection in more detail.

\par Note that $K_n=\Q(\mu_{p^{n+1}})$. Let $\Delta := \operatorname{Gal}(K/\Q) \cong (\Z/p\Z)^\times$ denote the Galois group of $K$ over $\Q$, which is cyclic of order $(p-1)$. The mod-$p$ cyclotomic character $\bar{\chi}: \Delta \xrightarrow{\sim} (\Z/p\Z)^\times$ is an isomorphism by construction, and we write $\omega: \Delta \hookrightarrow \Z_p^\times$ for its Teichm\"uller lift. For convenience, let us denote the $p$-primary part of the class group of $K$ by $A_K := \operatorname{Cl}(K)[p^\infty]$.
\par A prime $p$ is said to be \emph{irregular} if it divides the class number of $\Q(\mu_p)$, and said to be \emph{regular} otherwise. The Galois action of $\Delta$ on $A_K$ allows us to decompose $A_K$ into a direct sum of eigenspaces under the characters $\omega^i$, for $i \in \{0,1,\dots,p-2\}$:
\[A_K = \bigoplus_{i=0}^{p-2} A_K(\omega^i),\] \noindent where $A_K(\omega^i) := \{x \in A_K : \sigma x = \omega(\sigma)^i x \text{ for all } \sigma \in \Delta\}$. We refer to $A_K(\omega^i)$ as the \emph{$i$-th eigenspace} of $A_K$. These eigenspaces are customarily classified as \emph{even} or \emph{odd} depending on the parity of $i$, i.e., the even (resp. odd) eigenspaces correspond to even (resp. odd) $i$. Accordingly, one has that
$$
A_K^+ := \bigoplus_{\substack{0 \leq i \leq p-2 \\ i \text{ even}}} A_K(\omega^i), \quad \text{and} \quad A_K^- := \bigoplus_{\substack{0 \leq i \leq p-2 \\ i \text{ odd}}} A_K(\omega^i).
$$
\noindent A famous conjecture due to Vandiver asserts that $A_K^+ = 0$, i.e., the plus part vanishes for all odd primes $p$. Let $f(x) := \frac{x}{e^x - 1}$, and define the Bernoulli numbers $B_n$ via the Taylor expansion
$$f(x) = \sum_{n=0}^\infty \frac{B_n}{n!}x^n, \quad \text{so that } B_n = f^{(n)}(0).
$$
\noindent These numbers play a central role in algebraic number theory. In particular, for positive integers $n \geq 1$, the special values of the Riemann zeta function at negative integers are given by the identity
$
\zeta(1 - n) = -\frac{B_n}{n}
$. The Herbrand–Ribet theorem states that for even integers $k$ with $2 \leq k \leq p - 3$, the $(p - k)$-th eigenspace $A_K(\omega^{p-k})$ is nonzero if and only if $p$ divides $B_k$. This connection was made more precise by Mazur and Wiles, who showed that the order of this eigenspace is given by $$
\# A_K(\omega^{p-k}) = p^{v_p(B_{1, \omega^{k-1}})},
$$
\noindent where $B_{1, \omega^{k-1}} := \frac{1}{p} \sum_{a=1}^{p-1} a \omega^{k-1}(a)$ is a generalized Bernoulli number, which satisfies the congruence
\[B_{1, \omega^{k-1}}\equiv \frac{B_k}{k}\pmod{p}.\]

\par Define $E_K:=\Z[\mu_p, 1/p]^\times$, the group of units in the ring of $p$-integers of $K = \Q(\mu_p)$. Within $E_K$, we consider the subgroup $
C_K \subseteq E_K
$ of \emph{cyclotomic $p$-units}, defined to be the subgroup generated by $-1$ and elements of the form $\zeta_p^i - 1$, where $\zeta_p := \exp(2\pi i/p)$ is a fixed primitive $p$-th root of unity and $i \in \Z$. A fundamental theorem in the theory of cyclotomic fields states that the index $[E_K : C_K]$ is finite, and its $p$-part equals $\# A_K^+$. Let $S_p=\{(1-\zeta_p)\}$ be the prime of $K$ which lies above $p$ and $K_{S_p}\subset \bar{K}$ be the maximal extension of $K$ in which all primes $\ell\neq p$ are unramified. By Kummer theory, 
\[H^1(K_{S_p}/K, \mu_p)\simeq D_K/(K^\times)^p,\]
where
\[D_K=\{\alpha\in K^\times\mid \alpha \cO_{K, S_p}=\mathfrak{a}^p, \mathfrak{a}\subset \cO_{K, S_p}\}.\]
We note that $\op{Gal}(K_{S_p}/K)$ acts trivially on $\mu_p$, nonetheless, there is a natural action of $\Delta$ on $H^1(K_{S_p}/K, \mu_p)$. Furthermore, there is a $\Delta$-equivariant cup product pairing 
\[H^1(K_{S_p}/K, \mu_p)\times H^1(K_{S_p}/K, \mu_p)\rightarrow H^2(K_{S_p}/K, \mu_p^{\otimes 2}).\] As an $\F_p[\Delta]$-module, we identify $H^2(K_{S_p}/K, \mu_p^{\otimes 2})$ with $H^2(K_{S_p}/K, \mu_p)\otimes \mu_p$. Let $A_{K}$ be the $p$-primary part of the class group of $K$. There is a natural $\Delta$-equivariant isomorphism:
\[H^2(K_{S_p}/K, \mu_p)\simeq A_{K}/pA_{K},\] cf. \cite[equation (9)]{McCallumSharifi}. Thus one has a cup-product pairing 
\[D_K/(K^\times)^p \times D_K/(K^\times)^p\rightarrow A_{K}/pA_{K} \otimes \mu_p,\] which in turn induces a cup product pairing on cyclotomic $p$-units:
\begin{equation}\langle \cdot, \cdot \rangle_p: C_K \times C_K\rightarrow A_{K}^-\otimes \mu_p.\end{equation} The following is a rephrasing of \cite[Conjecture 5.3]{McCallumSharifi}.
\begin{conjecture}[McCallum--Sharifi]
    For all primes $p$, the pairing $\langle \cdot, \cdot\rangle_p$ is nontrivial.
\end{conjecture}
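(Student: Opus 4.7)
The plan is to attack the pairing on each $\Delta$-eigenspace separately, reduce to explicit cyclotomic units whose Kummer images are well understood, and then import machinery relating Galois cup products to arithmetic $L$-values. Since the pairing $\langle\cdot,\cdot\rangle_p$ is $\Delta$-equivariant and its target lies in $A_K^-\otimes\mu_p$, a component of the pairing in bidegree $(\omega^i,\omega^j)$ can be nonzero only when $i+j$ is odd and when the eigenspace $A_K(\omega^{i+j})$ is itself nonzero. By Herbrand--Ribet, the latter is controlled by the divisibility $p\mid B_{p-(i+j)}$, so the first step is to fix attention on those eigencomponents where the Bernoulli obstruction actually permits a nonzero value, and to show that there the paired value is truly nonzero. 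I would use the fact that each $\omega^i$-isotypic component of $C_K/C_K^p$ (for $i\ne 0$) is one-dimensional over $\F_p$, generated by an explicit projector applied to $1-\zeta_p$, so that the problem reduces to evaluating $\langle e_i(1-\zeta_p),e_j(1-\zeta_p)\rangle_p$ for appropriate idempotents $e_i,e_j$.

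The second step is to convert the cup product into something computable. Using the identification $H^2(K_{S_p}/K,\mu_p)\simeq A_K/pA_K$, the cup product can be described via a Galois cocycle on $\op{Gal}(K(\mu_p,u^{1/p},v^{1/p})/K)$ attached to commutator relations in this $p$-group, effectively a mod-$p$ Massey-type invariant. The most promising route for establishing non-triviality is then Sharifi's conjecture, which identifies cup products of cyclotomic units with images of Manin symbols under a map from cuspidal modular symbols modulo the Eisenstein ideal. The work of Fukaya--Kato establishes this identification in one direction under mild hypotheses, and combined with Ohta's description of the Eisenstein quotient of the modular Hecke algebra this reduces non-triviality of $\langle\cdot,\cdot\rangle_p$ in a given eigenspace to non-triviality of an image of a specific Manin symbol, which ultimately can be pinned to non-vanishing of a relevant $p$-adic $L$-value.

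The hard part, and the reason this remains conjectural, is twofold. First, Sharifi's map is not known to be an isomorphism in full generality, so even when one produces a non-trivial modular symbol its image under the cup-product map could conceivably be zero; controlling the kernel of Sharifi's map without assuming Vandiver's conjecture (or some mild strengthening of it) is the central obstruction. Second, in an eigenspace $A_K(\omega^{p-k})$ of $\F_p$-dimension greater than one, the cup product could land in a proper non-zero submodule and still be nonzero, but ruling out accidental cancellation requires finer information than what Herbrand--Ribet or even Mazur--Wiles provides. I would therefore expect the bulk of the effort to go into a case analysis for eigenspaces where the Vandiver-type structure is not controlled, combined with a refined study of the Bockstein-like comparison maps from \cite{Lametal} invoked elsewhere in the paper, which is precisely where the Massey-product framework developed in the sequel is likely to play a decisive role.
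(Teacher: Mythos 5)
The statement you were asked about is not a theorem of the paper but an open conjecture: the paper records the McCallum--Sharifi conjecture without proof, noting only that it has been verified computationally for primes $p<1000$ (citing \cite{sharifieisenstein}). There is therefore no proof in the paper to compare against, and your text, as you yourself acknowledge midway through (``the reason this remains conjectural''), is a research programme rather than a proof. As an answer to the prompt it contains a genuine gap in the strongest possible sense: the key step --- actually showing that some eigencomponent of the pairing is nonzero --- is never carried out, and the two obstructions you identify (the kernel of Sharifi's comparison map in the absence of Vandiver-type hypotheses, and possible cancellation in eigenspaces of $\F_p$-dimension greater than one) are precisely why the conjecture is open.

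That said, your sketch of the landscape is largely accurate and would make a reasonable survey paragraph: the $\Delta$-equivariance does confine the pairing to odd eigencomponents governed by Herbrand--Ribet/Mazur--Wiles, the reduction to values $\langle e_i(1-\zeta_p),e_j(1-\zeta_p)\rangle_p$ on explicit cyclotomic units is the standard starting point, and the route through Sharifi's conjecture, Fukaya--Kato, and Ohta's work on the Eisenstein ideal is the most serious known approach to nontriviality. One small caution: with the $\mu_p$-twist in the target $A_K^-\otimes\mu_p$, the eigenspace bookkeeping shifts by one, so ``$A_K(\omega^{i+j})$ nonzero'' should be adjusted to the correct twisted component before you rely on it. But the honest conclusion is that nothing here proves the statement, and the paper does not claim otherwise; if you want a provable assertion in this neighbourhood, Theorem \ref{MSthm} (the cup-product criterion for $\lambda_i\geq 2$) is the result the paper actually establishes and uses.
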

\noindent It is shown in \cite{sharifieisenstein} that the conjecture is true for primes $p<1000$. 
\par The above pairing has various connections to Iwasawa theory and is related to deep conjectures like Greenberg's pseudo-nullity conjecture. Recall that the cyclotomic $\Z_p$-extension of $K$ is given by $K_{\op{cyc}}=K(\mu_{p^\infty})$ and $K_n=\Q(\mu_{p^{n+1}})$. Let $A_n:=\op{Cl}(K_n)[p^\infty]$, note that $A_0=A_K$. Then for $n\gg 0$, 
\[\# A_n(\omega^i)=p^{n\lambda_i+\nu_i}.\] Suppose that $i$ is odd and $A_K(\omega^i)\neq 0$, then Vandiver's conjecture predicts that $A_K(\omega^i)$ is either trivial or cyclic. Let $G:=\op{Gal}(K_{S_p}/K)$ and 
\[\kappa: G\twoheadrightarrow\Gamma\simeq \Z_p\] be the natural quotient map. Set $\bar{\kappa}:G\rightarrow \Z/p\Z$ be the mod-$p$ reduction of $\kappa$. Since $A_K(\omega^i)$ is cyclic, it follows that it gives rise to a class $\alpha\in H^1(G, \mu_p)$. The cup product $\bar{\kappa}\cup \alpha\in H^2(G, \mu_p^{\otimes 2})$ gives an element in $A_K\otimes \mu_p$. 

\begin{theorem}[McCallum--Sharifi]\label{MSthm}
    Assume that $i$ is odd and $A_K(\omega^i)\neq 0$ is cyclic. Then, $\lambda_i\geq 1$. Further, we have that 
    \[\lambda_i\geq 2\text{ if and only if }\bar{\kappa}\cup \alpha=0. \]
\end{theorem}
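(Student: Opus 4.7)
The plan is to translate the theorem into a question about the structure of the $\omega^i$-eigenspace $X_i := \mathcal{X}(K_\infty)(\omega^i)$ of the Iwasawa module, and then to identify cup product with $\bar\kappa$ as a Bockstein operator that measures the $T$-filtration on $X_i$.

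First I would analyze the structure of $X_i$. Since $K = \Q(\mu_p)$ is abelian over $\Q$, the Ferrero--Washington theorem gives $\mu(X_i) = 0$. Because $p$ is totally ramified in $K_\infty/K$, standard control theory (combined with the vanishing of the unit contribution in odd eigenspaces) yields $X_i/TX_i \cong A_K(\omega^i)$. Cyclicity and nontriviality of $A_K(\omega^i)$ together with Nakayama's lemma force $X_i$ to be a cyclic $\Lambda$-module, so $X_i \cong \Lambda/(f_i)$ for a distinguished polynomial $f_i$ of degree $\lambda_i$. The identification $\Lambda/(T, f_i) \cong \Z_p/(f_i(0)) \cong A_K(\omega^i) \neq 0$ forces $\lambda_i \geq 1$, which is the first assertion. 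Reducing modulo $p$, $\bar f_i = T^{\lambda_i}$, so
\[
X_i/pX_i \cong \F_p\llbracket T\rrbracket/(T^{\lambda_i}),
\]
and consequently $\lambda_i \geq 2$ if and only if $T$ acts nontrivially on $X_i/pX_i$.

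Second, I would reinterpret $\bar\kappa \cup \alpha$ as a Bockstein. The class $\bar\kappa \in H^1(G, \F_p)$ classifies an extension of $\F_p[G]$-modules
\[
0 \to \F_p \to M_\kappa \to \F_p \to 0,
\]
coming from the first cyclotomic layer $K_1/K$. Twisting by $\mu_p$ yields
\[
0 \to \mu_p \to \mu_p \otimes_{\F_p} M_\kappa \to \mu_p \to 0,
\]
whose connecting homomorphism $\delta : H^1(G, \mu_p) \to H^2(G, \mu_p^{\otimes 2})$ coincides with cup product by $\bar\kappa$. Hence $\bar\kappa \cup \alpha = 0$ if and only if $\alpha$ lifts to a class in $H^1(G, \mu_p \otimes_{\F_p} M_\kappa)$.

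Third, I would match the lifting condition against the $T$-action on $X_i/pX_i$. Using the Kummer-theoretic description of $\alpha$ as corresponding to a generator of $A_K(\omega^i)/p$, the isomorphism $H^2(G, \mu_p) \cong A_K/pA_K$, and global Tate duality between the relevant eigenspaces of $H^1$ and $H^2$, one translates the vanishing of $\bar\kappa \cup \alpha$ into the statement that $T$ acts nontrivially on $X_i/pX_i$. By the structural analysis of the first step, this is exactly the condition $\lambda_i \geq 2$. The main obstacle lies precisely in this last identification: transporting the cohomological condition $\bar\kappa \cup \alpha = 0$ across Kummer theory, class field theory, global Tate duality, and the $\Delta$-eigenspace decomposition into the clean algebraic statement $T \cdot X_i / pX_i \neq 0$, while correctly tracking all the $\mu_p$-twists and sign conventions. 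Once this dictionary is pinned down, both assertions of the theorem follow at once.
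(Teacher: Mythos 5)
The paper does not actually prove this statement; it is quoted from McCallum--Sharifi with a pointer to \cite{McCallumSharifi}, so there is no internal argument to compare against. Judged on its own terms, your first step is essentially correct and complete: with one totally ramified prime above $p$, Iwasawa's control theorem gives $X_i/TX_i\cong A_K(\omega^i)$, Nakayama plus the hypothesis gives $\Lambda$-cyclicity, and Ferrero--Washington gives $\mu=0$; hence $\lambda_i\geq 1$ and, once $X_i\cong\Lambda/(f_i)$ with $f_i$ distinguished, $\lambda_i\geq 2$ iff $T$ acts nontrivially on $X_i/pX_i$. (Two small repairs here: cyclicity alone gives $X_i\cong\Lambda/J$ for an ideal $J$ that need not be principal --- you must also invoke Iwasawa's theorem that odd eigenspaces contain no nonzero finite $\Lambda$-submodules to conclude $J=(f_i)$; and your second step should treat $\bar\kappa$ as a class in $H^1(G,\F_p)$ cupping into $H^2(G,\mu_p)$, with the extra $\mu_p$-twist handled separately.)

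The genuine gap is your third step, which you yourself flag as ``the main obstacle'' and then do not carry out: the equivalence between $\bar\kappa\cup\alpha=0$ and $T\cdot(X_i/pX_i)\neq 0$ \emph{is} the second assertion of the theorem, so asserting that ``once this dictionary is pinned down, both assertions follow'' leaves the theorem unproved. To close it along your lines you need three concrete inputs, none of which appear in the sketch: (i) for odd $i\neq 1$ the eigenspace $H^1(G,\mu_p)(\omega^i)\cong (D_K/(K^\times)^p)(\omega^i)$ is one-dimensional and spanned by $\alpha$ (the $p$-unit contribution vanishes in odd eigenspaces), and likewise $H^2(G,\mu_p)$ is one-dimensional in the relevant eigenspace, so ``$\bar\kappa\cup\alpha\neq 0$'' is the same as ``cup product with $\bar\kappa$ is surjective onto that eigenspace''; (ii) the $n=1$ case of the generalized Bockstein isomorphism (the paper's Section 6, specializing \eqref{LLSWW eqn} and the formula $\Psi^{(1)}([f])=(\bar\kappa\cup f)\,x$), which converts surjectivity of the cup product into $I H^2_{\op{Iw}}=I^2H^2_{\op{Iw}}$, i.e.\ into $T$ acting topologically nilpotently --- in fact trivially, by cyclicity --- on $H^2_{\op{Iw}}(K_{S_p}/K_\infty,\mu_p)\otimes\F_p$ in that eigenspace; and (iii) the Poitou--Tate identification \eqref{PTeqn} matching this Iwasawa $H^2$ in the odd eigenspace with $X_i/pX_i$ (the local terms and $H^0$ drop out for odd $i\neq 1$). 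Your appeal to ``global Tate duality'' gestures at (iii) but the eigenspace dimension counts in (i) and the Bockstein mechanism in (ii) are indispensable and absent, so as written the proposal is a plan rather than a proof.
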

\begin{proof}
    See Proposition 4.2 and ll.~12--13 on p.~242 in \cite{McCallumSharifi}.
\end{proof}

\noindent The McCallum–Sharifi conjecture and the above theorem illustrate a connection between the structure of cyclotomic class groups and Galois cohomology via the cup product pairing. The theorem shows that the vanishing of a specific cup product class precisely controls whether the $\lambda$-invariant in a given eigenspace exceeds the minimal expected bound. This interplay between  Iwasawa theory and the pairing above motivates further investigation connecting fine Selmer groups, Iwasawa modules, and Massey products, which are the main themes of this article.

\section{Fine Selmer groups and the $\mu=0$ conjecture}
\par In this section, we fix an odd prime number $p$ and introduce the main objects of study in this article: the fine Selmer groups associated to Galois representations considered over a $\Z_p$-extension $K_\infty$ of a number field $K$. For a comprehensive introduction to fine Selmer groups, the reader may refer to \cite{CoatesSujatha}. Let $S$ be a finite set of primes of $K$ containing $S_p(K)$.
\begin{ass}\label{asszpextn} We assume that the primes $v\in S_p(K)$ are totally ramified in $K_\infty$ and that all primes $v\in S$ are finitely decomposed in $K_\infty$.
\end{ass}Throughout, for any field $F$ of characteristic zero, set $\op{G}_F:=\op{Gal}(\bar{F}/F)$. We use the shorthand notation $H^i(F, \cdot) := H^i(\operatorname{G}_F, \cdot)$. Let $K_S\subset \overline{K}$ be the maximal extension of $K$ in which all primes $v \notin S$ are unramified. We shall set $\op{G}_{K,S}:=\op{Gal}(K_S/K)$ and adopt the notation \[H^i(K_S/L, \cdot):=H^i(\op{Gal}(K_S/L), \cdot)\] for any field $L$ such that $K\subseteq L \subseteq K_S$. For $v\in \Omega_K$, set 
\[K^i_v(\cdot/L):=\prod_{w|v} H^i(L_w, \cdot),\] where $w$ runs through the primes of $L$ which lie above $v$.

\begin{definition}
    Let $M$ be a $p$-primary $\op{G}_{K,S}$-module and $L$ be an extension of $K$ which is contained in $K_S$. The fine Selmer group over $L$ associated to $M$ is defined as follows
\[\Rfine{M}{L}:=\op{ker}\left\{H^1(K_S/L, M)\longrightarrow \bigoplus_{v\in S} K^1_v(M/L)\right\}.\]
\end{definition}
\noindent Thus the fine Selmer group consists of all cohomology classes with values in $M$ that are unramified at the primes outside $S$ and trivial at all primes of $L$ that lie above $S$. Apriori, $\Rfine{M}{L}$ depends on the choice of primes $S$. 
\begin{proposition}
    Let $L/K$ be an infinite algebraic extension which contains a $\Z_p$-extension $K_\infty$ of $K$ satisfying Assumption \ref{asszpextn}. Then, $\Rfine{M}{L}$ does not depend on the choice of primes $S$.
\end{proposition}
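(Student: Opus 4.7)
I plan to reduce to adding a single prime and show that the discrepancy between the two fine Selmer groups is controlled by a local cohomology group that vanishes. Suppose $S \subseteq S'$ are two finite sets of primes of $K$ both containing $S_p(K)$ and both satisfying Assumption \ref{asszpextn}; by induction it suffices to treat $S' = S \cup \{v'\}$ with $v' \notin S$ (so $v' \nmid p$ and $v'$ is finitely decomposed in $K_\infty$). Both fine Selmer groups can be viewed as subspaces of $H^1(L, M)$ via inflation, and I aim to show that they coincide.

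The inclusion $\Rfine{M}{L}^{S'} \subseteq \Rfine{M}{L}^{S}$ is almost tautological: if $c \in \Rfine{M}{L}^{S'}$, then $c$ vanishes in $H^1(L_w, M)$ for every $w \mid v'$, hence in particular is unramified at such $w$, so $c$ factors through $H^1(K_S/L, M)$; the remaining local conditions at primes of $S$ are identical in both definitions.

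The reverse inclusion reduces to the local vanishing $H^1(L_w^{\ur}/L_w, M) = 0$ for each prime $w$ of $L$ above $v'$. Granting this, any class $c \in \Rfine{M}{L}^{S}$ is unramified at $w$, so its restriction to $H^1(L_w, M)$ lies in the inflation subgroup $H^1(L_w^{\ur}/L_w, M^{I_w}) = H^1(L_w^{\ur}/L_w, M) = 0$, which yields $c \in \Rfine{M}{L}^{S'}$. To establish the local vanishing, I would combine two inputs. First, since $v' \nmid p$, local class field theory forces every $\Z_p$-extension of $K_{v'}$ to be unramified: the pro-$p$ completion of $K_{v'}^\times$ has $\Z_p$-rank exactly one, arising from the Frobenius on $v'$. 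Second, because $v'$ is finitely decomposed in $K_\infty$, the decomposition subgroup of $\Gamma \cong \Z_p$ at $v'$ is open, so $K_{\infty,v'}/K_{v'}$ is itself a $\Z_p$-extension and must therefore coincide with the unramified $\Z_p$-extension of $K_{v'}$. Consequently $L_w \supseteq K_{\infty,v'} \subseteq K_{v'}^{\ur}$, and
\[\op{Gal}(L_w^{\ur}/L_w) \cong \op{Gal}\bigl(K_{v'}^{\ur}\,\big/\,L_w \cap K_{v'}^{\ur}\bigr)\]
is a closed subgroup of $\op{Gal}(K_{v'}^{\ur}/K_{\infty,v'}) \cong \prod_{\ell \neq p} \Z_\ell$, a profinite group of supernatural order coprime to $p$. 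Its cohomology with $p$-primary coefficients therefore vanishes in positive degrees.

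The only step requiring real care is the local vanishing, and it fails if either half of Assumption \ref{asszpextn} is dropped: if $v' \mid p$, then ramified $\Z_p$-extensions of $K_{v'}$ exist and $\op{Gal}(L_w^{\ur}/L_w)$ can retain a nontrivial pro-$p$ quotient, while if $v'$ is infinitely decomposed then $K_{\infty,v'} = K_{v'}$ and the unramified reservoir used to kill the cohomology disappears. Everything else — the reduction to one prime at a time and the inflation--restriction bookkeeping at the global level — is routine.
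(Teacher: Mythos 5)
Your proposal is correct and follows essentially the same route as the paper: the containment $\Rfine{M}{L}^{S'}\subseteq\Rfine{M}{L}^{S}$ is formal, and the reverse containment comes down to the vanishing of the unramified local cohomology $H^1(L_w^{\ur}/L_w,M)$ at the added primes, which both arguments deduce from the fact that a finitely decomposed prime $v'\nmid p$ acquires the unramified $\Z_p$-extension locally in $K_\infty$, so the residual Galois group of $L_w$ is prime to $p$. You simply make explicit (via local class field theory) the step the paper asserts in one line, namely that $\overline{\kappa_w}/\kappa_w$ is a prime-to-$p$ extension.
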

\begin{proof}
    In order to emphasize the dependence of the fine Selmer group on $S$, we set 
    \[\mathcal{R}_{p^\infty}^S(M/L):=\op{ker}\left\{H^1(K_S/L, M)\longrightarrow \bigoplus_{v\in S} K^1_v(M/L)\right\}.\]
    Let $S'$ be a finite set of primes containing $S$ such that all $v\in S$ are finitely decomposed in $K_\infty$. We show that 
    \[\mathcal{R}_{p^\infty}^{S'}(M/L)=\mathcal{R}_{p^\infty}^S(M/L).\] Set $S'':=S'\setminus S$ and for $v\in S''$, let $w$ be a prime of $L$ which lies above $v$ and denote by $\op{I}_w$ the inertia group at $w$ and $\kappa_w$ its residue field. Note that $\mathcal{R}_{p^\infty}^{S'}(M/L)$ consists of classes $f\in \mathcal{R}_{p^\infty}^{S}(M/L)$ which when restricted to each inertia group $\op{I}_w$ is zero for $w\in S''$.  Thus for $w\in S''$, the restriction of $f$ to $H^1(L_w, M)$ lies in 
\[H^1_{\op{nr}}(L_w, M):=\op{ker}\left\{H^1(L_w, M)\longrightarrow H^1(\op{I}_w, M)\right\},\]
which can be identified with $H^1(\kappa_w, M)$. Since $v$ is finitely decomposed in $K_\infty$, it follows that $\overline{\kappa_w}/\kappa_w$ is a prime to $p$-extension and hence $H^1(\kappa_w, M)=0$. Thus we find that $f\in \mathcal{R}_{p^\infty}^{S'}(M/L)$.
\end{proof}

\par We shall study the algebraic structure of fine Selmer groups over $\Z_p$-extensions of $K$ satisfying Assumption \ref{asszpextn}. The module $M$ considered will be a $p$-divisible module associated with an integral Galois representations. Such representations arise naturally in geometry, in the following contexts:
\begin{enumerate}
\item[(a)] Galois representations arising from the $p$-adic Tate module $\mathbf{T}_p A=\varprojlim_n A[p^n]$ of an abelian variety $A$ defined over $\bar{\Q}$.
\item[(b)] Galois actions on the \'etale cohomology groups $H^i(X, \mathbb{Z}_p)$ of algebraic varieties $X$ defined over $\overline{\Q}$, which yield natural integral Galois representations.
\item[(c)] Through the Langlands correspondence, Galois representations are attached to certain algebraic, essentially self-dual automorphic representations. In particular, Galois representations arising from Hecke eigencuspforms provide a rich source of $2$-dimensional Galois representations.
\end{enumerate}

Let $\cK$ be a finite extension of $\mathbb{Q}_p$, and let $\mathcal{O}$ denote its valuation ring. Given a continuous integral Galois representation $\rho: \operatorname{G}_{K,S} \rightarrow \operatorname{GL}_d(\mathcal{O})$, let $\mathbf{T}_\rho$ denote the underlying free $\mathcal{O}$-module of rank $n$ on which $\operatorname{G}_{K,S}$ acts via $\rho$. Set $\mathbf{V}_{\rho}:=\mathbf{T}_\rho\otimes_{\Z_p} \Q_p$ and consider the $p$-divisible discrete $\operatorname{G}_{K,S}$-module
$$
\mathbf{A}(\rho) := \mathbf{V}_\rho/\mathbf{T}_\rho=\mathbf{T}_\rho \otimes_{\mathcal{O}} (K/\mathcal{O}).
$$
Note that when $\mathbf{T}_\rho$ is the Tate-module $\mathbf{T}_pA$ of an abelian variety $A$, then $\mathbf{A}(\rho)=A[p^\infty]$. The main object of study in this article is the fine Selmer group $\Rfine{\mathbf{A}(\rho)}{K_\infty}$, we set $Y(\mathbf{A}(\rho)/K_\infty)$ to be its Pontryagin dual. We let $\mu_\rho$ be the $\mu$-invariant
\[\mu_\rho:=\mu\left(Y(\mathbf{A}(\rho)/K_\infty)\right).\] Note that the fine Selmer group depends on the choice of lattice $\mathbf{T}_\rho$ in $\mathbf{V}_{\rho}$. When $K_\infty$ is the cyclotomic $\Z_p$-extension of $K$,  \cite[Conjecture A]{CoatesSujatha} predicts that the $\mu_\rho=0$. Denote by $\rho^\vee$ the contragredient representation and let $\rho^*:=\rho^\vee(1)$ the Tate-dual of $\rho$. Let $\varpi$ be a uniformizer of $\cO$ and $\F:=\cO/(\varpi)$. Given an $\cO$-module $M$, set $M[\varpi^n]:=\op{ker}(M\xrightarrow{\varpi^n} M)$ and set $M[\varpi^\infty]:=\bigcup_n M[\varpi^n]$. Say that $M$ is $p$-primary if $M=M[\varpi^\infty]$. We set $\mathbf{V}_{\bar{\rho}}:=\mathbf{T}_\rho\otimes_{\cO} \F$ and $\mathbf{V}_{\bar{\rho}^*}:=\mathbf{T}_{\rho^*}\otimes_{\cO} \F=\mathbf{V}_{\bar{\rho}}^\vee(1)$.
\begin{conjecture}[Weak Leopoldt Conjecture for \(\rho\)]\label{WLC} We have that \[ H^2(K_S/K_\infty, \mathbf{A}(\rho)) = 0.\]
\end{conjecture}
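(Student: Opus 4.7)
My strategy follows the classical approach of Iwasawa, Greenberg, and Perrin-Riou via global duality: route the question through the Iwasawa cohomology of the Tate-dual and reduce the vanishing of $H^2(K_S/K_\infty,\mathbf{A}(\rho))$ to a $\Lambda$-rank computation for a compact module.

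First, using Assumption \ref{asszpextn} and the continuity of Galois cohomology in the variable $K_\infty = \bigcup_n K_n$, I would write
\[
H^2(K_S/K_\infty,\mathbf{A}(\rho)) \;=\; \varinjlim_n H^2(K_S/K_n,\mathbf{A}(\rho)),
\]
where $K_n$ runs over the finite layers. Each term on the right is finite by the Poitou--Tate nine-term sequence (using that $S(K_n)$ is finite under our hypotheses), and global duality identifies its Pontryagin dual with the Tate--Shafarevich-type kernel
\[
\Sh^1_S(K_n,\mathbf{T}_{\rho^*}) \;:=\; \ker\!\left(H^1(K_S/K_n,\mathbf{T}_{\rho^*}) \longrightarrow \bigoplus_{v\in S(K_n)} H^1(K_{n,v},\mathbf{T}_{\rho^*})\right).
\]
Passing to the inverse limit under corestriction, the dual of the conjecture becomes the vanishing of the compact $\Lambda$-module $\varprojlim_n \Sh^1_S(K_n,\mathbf{T}_{\rho^*})$.

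Second, I would embed this inverse limit into the global Iwasawa cohomology $H^1_{\op{Iw}}(K_S/K_\infty,\mathbf{T}_{\rho^*})$ and aim to show that the quotient has vanishing $\Lambda$-rank. Via Tate's global Euler characteristic formula applied level by level, this reduces to balancing the $\Lambda$-rank of the global Iwasawa cohomology against those of the local groups $H^1_{\op{Iw}}(K_{\infty,v},\mathbf{T}_{\rho^*})$ for $v\in S$. Under the totally ramified hypothesis at primes above $p$, the local ranks are computable from the local Euler--Poincar\'e characteristic together with Tate duality, and matching them against the expected global rank should force the limit module to be $\Lambda$-torsion. To upgrade $\Lambda$-torsion to genuine vanishing, I would then perform a Nakayama/descent argument: compute the $\Gamma$-coinvariants through the Hochschild--Serre spectral sequence (using $\op{cd}(\Gamma)=1$), reducing to finiteness of $H^2(K_S/K,\mathbf{A}(\rho))$, and conclude using the compact version of Nakayama's lemma.

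The principal obstacle lies in the second step: the required rank identity is in essence \emph{the} weak Leopoldt conjecture itself and remains open in full generality. For the cyclotomic extension $K_\infty = K_{\op{cyc}}$, work of Perrin-Riou and Ochiai settles it by combining Soul\'e-type divisibility results with the cyclotomic Euler characteristic; for an arbitrary $\Z_p$-extension satisfying Assumption \ref{asszpextn}, no uniform technique is presently available, and one would need additional input (e.g.\ ordinariness of $\rho$ at $p$, or Panchishkin-type hypotheses on $\mathbf{T}_\rho$) to carry out the local rank computations.
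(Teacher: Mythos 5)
The statement you were asked to prove is stated in the paper as a \emph{conjecture} (the Weak Leopoldt Conjecture for $\rho$), not as a theorem: the paper offers no proof and merely points to Perrin-Riou's Appendix B for a list of cases in which it is known. Your proposal correctly diagnoses this situation — the duality-and-rank strategy you sketch (dualizing $H^2(K_S/K_\infty,\mathbf{A}(\rho))$ into compact Iwasawa cohomology of $\mathbf{T}_{\rho^*}$, comparing global and local $\Lambda$-ranks via Euler characteristics, then descending by Nakayama) is indeed the standard route, and your final paragraph rightly identifies that the decisive rank identity \emph{is} the conjecture and remains open for a general $\Z_p$-extension. In short, there is no proof in the paper to compare against, and your honest assessment that the statement cannot be established in the generality stated is the correct conclusion.
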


We refer the reader to \cite[Appendix B]{perrinriou} for a compilation of cases in which the Weak Leopoldt Conjecture has been established.
\par Let $M$ be a $p$-primary module over $\op{G}_{K,S}$ and let $H^i_{\op{Iw}}(K_S/K_\infty, M)$ be the \emph{Iwasawa cohomology group}, which is defined as the inverse limit $\varprojlim_n H^i(K_S/K_n, M)$ taken with respect to corestriction maps. 

\begin{proposition}\label{H^2 equivalence propn}
    Assume that $\rho^*$ satisfies the weak Leopoldt conjecture \ref{WLC} over $K_\infty$. Then the following conditions are equivalent:
    \begin{enumerate}
    \item $H^2(K_S/K_\infty, \mathbf{V}_{\bar{\rho}^*})=0$, 
            \item $H^2(K_S/K_\infty, \mathbf{V}_{\bar{\rho}^*})$ is finite.
    \end{enumerate}
\end{proposition}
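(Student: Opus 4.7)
The direction (1)$\Rightarrow$(2) is immediate, so the substance of the statement is the reverse implication (2)$\Rightarrow$(1). The plan is to translate the finiteness question about $H^2(K_S/K_\infty,\mathbf{V}_{\bar{\rho}^*})$ into a statement about the $\varpi$-torsion of the Pontryagin dual of a certain Galois $H^1$ that is cofinitely generated over the Iwasawa algebra, and then eliminate this torsion via a no-finite-submodule principle.

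First, I would apply the cohomology functor $H^\ast(K_S/K_\infty,-)$ to the short exact sequence of $\op{G}_{K,S}$-modules
$$0\longrightarrow \mathbf{V}_{\bar{\rho}^*}\longrightarrow \mathbf{A}(\rho^*)\xrightarrow{\varpi}\mathbf{A}(\rho^*)\longrightarrow 0,$$
which is exact since $\mathbf{V}_{\bar{\rho}^*}=\mathbf{A}(\rho^*)[\varpi]$. The relevant portion of the resulting long exact sequence reads
$$H^1(K_S/K_\infty,\mathbf{A}(\rho^*))\xrightarrow{\varpi}H^1(K_S/K_\infty,\mathbf{A}(\rho^*))\longrightarrow H^2(K_S/K_\infty,\mathbf{V}_{\bar{\rho}^*})\longrightarrow H^2(K_S/K_\infty,\mathbf{A}(\rho^*)).$$
The weak Leopoldt conjecture for $\rho^*$ kills the rightmost term, so writing $M:=H^1(K_S/K_\infty,\mathbf{A}(\rho^*))$ one obtains an identification $H^2(K_S/K_\infty,\mathbf{V}_{\bar{\rho}^*})\cong M/\varpi M$.

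Next, I would dualize. Under Assumption \ref{asszpextn}, a standard control-type argument (combining inflation--restriction with the fact that $H^1(K_S/K,\mathbf{A}(\rho^*))$ is cofinitely generated over $\cO$) shows that $Y:=M^\vee$ is a finitely generated $\LcO$-module, where $\LcO:=\cO\llbracket \Gamma\rrbracket$; Pontryagin duality then identifies $(M/\varpi M)^\vee$ with $Y[\varpi]$. Consequently (1) amounts to $Y[\varpi]=0$ and (2) amounts to $Y[\varpi]$ being finite, so the implication (2)$\Rightarrow$(1) reduces to showing that $Y$ contains no non-trivial finite $\LcO$-submodule: any finite $Y[\varpi]$ is automatically annihilated by a power of $T$, hence is itself a finite $\LcO$-submodule of $Y$ and therefore must vanish.

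For this last step I would invoke the no-finite-submodule principle for $Y=H^1(K_S/K_\infty,\mathbf{A}(\rho^*))^\vee$, which is a standard consequence of the weak Leopoldt conjecture together with the total ramification and finite decomposition hypotheses on $K_\infty$, in the spirit of Greenberg's foundational results on Iwasawa theory of $p$-adic representations. The main obstacle will be pinning down this principle in the present generality; I expect to deduce it either from Jannsen's spectral sequence identifying $Y$ with a second Iwasawa cohomology of the Tate dual that is $\LcO$-torsion-free under weak Leopoldt, or via a direct inflation--restriction argument exploiting that each $v\in S$ has only finitely many places above it in $K_\infty$.
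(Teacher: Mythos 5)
Your proposal is correct and follows essentially the same route as the paper: the same Kummer-type exact sequence together with weak Leopoldt identifies $H^2(K_S/K_\infty,\mathbf{V}_{\bar{\rho}^*})^\vee$ with a (finite, under hypothesis (2)) submodule of $H^1(K_S/K_\infty,\mathbf{A}(\rho^*))^\vee$, and the conclusion follows from the absence of non-zero finite submodules in the latter. The only point you leave open --- pinning down the no-finite-submodule principle --- is handled in the paper by citing Theorem 4.7 of Ochiai--Venjakob, which is exactly the Jannsen-style statement you anticipate.
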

\begin{proof}
   For ease of notation, set $M:=\mathbf{A}(\rho^*)$ and identify $M[\varpi]$ with $\mathbf{V}_{\bar{\rho}^*}$. Consider the short exact sequence of Galois modules
    \[0\rightarrow M[\varpi]\rightarrow M\xrightarrow{\varpi} M\rightarrow 0. \]
    Note that we assume that $H^2(K_S/K_\infty, M)=0$ and hence we find that $H^2(K_S/K_\infty, M[\varpi])^\vee$ is a submodule of $H^1(K_S/K_\infty, M)^\vee$. It follows from \cite[Theorem 4.7]{ochivenjakob} that $H^1(K_S/K_\infty, M)^\vee$ contains no non-zero finite submodules.
\end{proof}
\noindent We define the residual fine Selmer group as follows:
\[\mathcal{R}(\mathbf{V}_{\bar{\rho}}/K_\infty):= \op{ker}\left\{H^1(K_S/K_\infty, \mathbf{V}_{\bar{\rho}})\longrightarrow \bigoplus_{v\in S} K^1_v(\mathbf{V}_{\bar{\rho}}/K_\infty)\right\}.\]
The Poitou--Tate sequence gives us a four term exact sequence:
\begin{equation}\label{PTeqn}0\rightarrow H^0(K_\infty,\mathbf{V}_{\bar{\rho}})\rightarrow \bigoplus_{v\in S} K_v^0(\mathbf{V}_{\bar{\rho}}/K_\infty)\rightarrow H^2_{\op{Iw}}(K_S/K_\infty, \mathbf{V}_{\bar{\rho}^*})\rightarrow \mathcal{R}(\mathbf{V}_{\bar{\rho}}/K_\infty)\rightarrow 0,\end{equation} cf. \cite[(43)]{CoatesSujatha}. 

    \begin{theorem}\label{Conj A criterion}
Let $K_\infty/K$ be a $\Z_p$-extension such that all primes $v\in S_p(K)$ are totally ramified in $K_\infty$. Let $S$ be a finite set of primes of $K$ containing $S_p(K)$ such that all primes $v\in S$ are finitely decomposed in $K_\infty$. Let $\cO$ be the valuation ring of a $p$-adic field and let $\rho:\op{G}_{K, S}\rightarrow \op{GL}_d(\cO)$ be a continuous Galois representation and let $\bar{\rho}$ be its residual representation. Then, the following are equivalent.
\begin{enumerate}
    \item\label{c1 Conj A criterion} The fine Selmer group $\Rfine{\mathbf{A}(\rho)}{K_\infty}$ is cotorsion over $\Lambda$ and $\mu_\rho=0$.
    \item\label{c2 Conj A criterion} The Iwasawa cohomology group $H^2_{\op{Iw}}(K_S/K_\infty, \mathbf{V}_{\bar{\rho}^*})$ is finite.
            \item $H^2(K_S/K_\infty, \mathbf{V}_{\bar{\rho}^*})$ is finite.
\end{enumerate}

\end{theorem}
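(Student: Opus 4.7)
The plan is to establish the three equivalences in two steps: (1) $\iff$ (2) via the Poitou--Tate four-term sequence combined with a descent modulo $\varpi$, and (2) $\iff$ (3) via a comparison of Iwasawa and discrete cohomology for finite coefficients.

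\textbf{Step 1: (1) $\iff$ (2).} Set $Y := \Rfine{\mathbf{A}(\rho)}{K_\infty}^\vee$. From the $\Lambda$-module structure theorem recalled in Section~2, condition (1) is equivalent to $Y$ being finitely generated over $\Z_p$, and by Nakayama to the finiteness of $Y/\varpi Y$, equivalently by Pontryagin duality to the finiteness of $\Rfine{\mathbf{A}(\rho)}{K_\infty}[\varpi]$. Next, I would apply the snake lemma to the commutative diagram arising from the $\varpi$-multiplication sequence $0 \to \mathbf{V}_{\bar{\rho}} \to \mathbf{A}(\rho) \xrightarrow{\varpi} \mathbf{A}(\rho) \to 0$ and its local counterparts at $v\in S$, extracting an exact sequence
\[ 0 \to H^0(K_\infty, \mathbf{A}(\rho))/\varpi \to \mathcal{R}(\mathbf{V}_{\bar{\rho}}/K_\infty) \to \Rfine{\mathbf{A}(\rho)}{K_\infty}[\varpi] \to C \to 0, \]
where $C$ injects into $\bigoplus_{v \in S} K^0_v(\mathbf{A}(\rho)/K_\infty)/\varpi$. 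Under Assumption~\ref{asszpextn}, all bounding terms are finite (the global $H^0$ lies in the finite module $\mathbf{A}(\rho)[\varpi^\infty]$-torsion of bounded exponent, and each $v \in S$ has finitely many primes above it in $K_\infty$, with each local $H^0$ again finite). Hence (1) is equivalent to the finiteness of the residual fine Selmer group $\mathcal{R}(\mathbf{V}_{\bar{\rho}}/K_\infty)$. Finally, applying the Poitou--Tate sequence (\ref{PTeqn}), whose first two terms $H^0(K_\infty,\mathbf{V}_{\bar{\rho}})$ and $\bigoplus_v K^0_v(\mathbf{V}_{\bar{\rho}}/K_\infty)$ are finite for the same reasons, identifies this with the finiteness of $H^2_{\op{Iw}}(K_S/K_\infty, \mathbf{V}_{\bar{\rho}^*})$, giving (2).

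\textbf{Step 2: (2) $\iff$ (3).} Here I must bridge the inverse limit $\varprojlim_n H^2(K_S/K_n, \mathbf{V}_{\bar{\rho}^*})$ (with corestriction) and the direct limit $\varinjlim_n H^2(K_S/K_n, \mathbf{V}_{\bar{\rho}^*})$ (with restriction). My approach is to invoke the Hochschild--Serre spectral sequence for $\Gamma_n := \op{Gal}(K_\infty/K_n) \simeq \Z_p$, which, being of cohomological dimension one, collapses to short exact sequences
\[ 0 \to H^1(\Gamma_n, H^1(K_S/K_\infty, \mathbf{V}_{\bar{\rho}^*})) \to H^2(K_S/K_n, \mathbf{V}_{\bar{\rho}^*}) \to H^2(K_S/K_\infty, \mathbf{V}_{\bar{\rho}^*})^{\Gamma_n} \to 0. \]
Since $H^1(K_S/K_\infty, \mathbf{V}_{\bar{\rho}^*})$ and $H^2(K_S/K_\infty, \mathbf{V}_{\bar{\rho}^*})$ are naturally $\F\llbracket T \rrbracket$-modules with $T := \gamma - 1$, a Nakayama-style argument in the category of $\F\llbracket T\rrbracket$-modules translates finiteness of either limit into the $\F$-dimensions $\dim_{\F} H^2(K_S/K_n, \mathbf{V}_{\bar{\rho}^*})$ being uniformly bounded in $n$, yielding the equivalence.

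\textbf{The main obstacle} is Step~2: transferring between inverse-limit Iwasawa cohomology and direct-limit discrete cohomology for a finite-coefficient module. The snake-lemma bookkeeping of Step~1 is essentially formal once Assumption~\ref{asszpextn} supplies finiteness of all the auxiliary terms. Step~2 is more delicate because the composition $\op{cor} \circ \op{res}$ equals multiplication by $[K_{n+1}:K_n] = p$, which vanishes on $\F$-vector spaces; hence finiteness of one limit does not trivially imply the other. Making the spectral-sequence comparison precise, and applying the structure theory of finitely generated $\F\llbracket T\rrbracket$-modules to the terms controlling the kernels and cokernels, is where the essential technical work lies.
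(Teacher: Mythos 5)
Your Step 1 is essentially the paper's own argument: the paper likewise reduces condition (1) to finiteness of $\Rfine{\mathbf{A}(\rho)}{K_\infty}[\varpi]$, compares this with $\mathcal{R}(\mathbf{V}_{\bar{\rho}}/K_\infty)$ via the same Kummer-sequence diagram (using Assumption \ref{asszpextn} to get finiteness of $\ker\beta$ and $\ker\gamma$), and then reads off the equivalence with (2) from the Poitou--Tate sequence \eqref{PTeqn}. For Step 2 the paper simply cites \cite[Theorem 3.2]{DRS}, so you are supplying an argument the paper outsources; your sketch is on the right track but, as written, the Hochschild--Serre sequence plus a ``Nakayama-style argument'' does not deliver the implication (2) $\Rightarrow$ (3). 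Finiteness of an inverse limit of $\F$-vector spaces does not bound the dimensions of its terms (an inverse system $\F^n$ with zero transition maps has vanishing limit), so you cannot pass from finiteness of $H^2_{\op{Iw}}$ to uniform boundedness of $\dim_\F H^2(K_S/K_n,\mathbf{V}_{\bar{\rho}^*})$ without an additional control statement. The missing tool is the descent isomorphism $H^2(K_S/K_n,\mathbf{V}_{\bar{\rho}^*}) \cong H^2_{\op{Iw}}(K_S/K_\infty,\mathbf{V}_{\bar{\rho}^*})/\omega_n$, where $\omega_n=(1+T)^{p^n}-1=T^{p^n}$ in characteristic $p$; it follows from Shapiro's lemma, identifying $H^i_{\op{Iw}}$ with $H^i(\op{G}_{K,S},\mathbf{V}_{\bar{\rho}^*}\otimes\Lambda^\iota)$, together with the vanishing of $H^3_{\op{Iw}}$ because $\op{G}_{K,S}$ has $p$-cohomological dimension $2$ for odd $p$. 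With that in hand, $\dim_\F H^2(K_S/K_n,\mathbf{V}_{\bar{\rho}^*})=\dim_\F H^2_{\op{Iw}}/T^{p^n}H^2_{\op{Iw}}$ is bounded precisely when $H^2_{\op{Iw}}$ is $\F\llbracket T\rrbracket$-torsion, i.e.\ finite, and your Hochschild--Serre computation correctly shows that boundedness is also equivalent to (3); so the route closes, but the load-bearing ingredient is this Shapiro-lemma control theorem rather than the spectral sequence alone.
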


\begin{proof}
    Let $\varpi$ be a uniformizer of $\cO$. We note that (1) holds if and only if 
    $\Rfine{\mathbf{A}(\rho)}{K_\infty}[\varpi]$ is finite. We identify $\mathbf{V}_{\bar{\rho}}$ with $\mathbf{A}(\rho)[\varpi]$ and consider the Kummer sequence:
    \[0\rightarrow \mathbf{V}_{\bar{\rho}}\rightarrow \mathbf{A}(\rho)\xrightarrow{\times \varpi} \mathbf{A}(\rho)\rightarrow 0\] and the associated short exact sequence of cohomology groups:
    \[0\rightarrow \frac{H^0(K_\infty, \mathbf{A}(\rho))}{\varpi H^0(K_\infty, \mathbf{A}(\rho))}\xrightarrow{\iota} H^1(K_S/K_\infty, \mathbf{V}_{\bar{\rho}})\xrightarrow{\beta} H^1(K_S/K_\infty, \mathbf{A}(\bar{\rho}))[\varpi]\rightarrow 0.\]
    We note that kernel of $\beta$ is clearly finite. 
    \par For $w\in S(F_\infty)$, let $\gamma_w$ be the map 
    \[\gamma_w: H^1(K_{\infty, w}, \mathbf{V}_{\bar{\rho}})\rightarrow H^1(K_{\infty, w}, \mathbf{A}(\bar{\rho}))[\varpi]\] induced from the Kummer sequence. Let \[\gamma_v: K_v^1(\mathbf{V}_{\bar{\rho}}/K_\infty)\rightarrow K_v^1(\mathbf{A}(\bar{\rho})/K_\infty)[\varpi]\] be the direct sum of maps $\gamma_w$ as $w$ ranges over the primes of $K_\infty$ which lie above $v$. This set of primes is finite by assumption and it follows that for $v\in S$, the kernel of $\gamma_v$ is finite. Let $\gamma:=\bigoplus_{v\in S} \gamma_v$ be the map 
    \[\gamma: \bigoplus_{v\in S} K_v^1(\mathbf{V}_{\bar{\rho}}/K_\infty)\rightarrow \bigoplus_{v\in S} K_v^1(\mathbf{A}(\bar{\rho})/K_\infty)[\varpi].\]
    \noindent Consider the commutative diagram:
\begin{equation}\label{fdiagram}
\begin{tikzcd}[column sep = small, row sep = large]
0\arrow{r} & \mathcal{R}(\mathbf{V}_{\bar{\rho}}/K_\infty)\arrow{r}\arrow{d}{\alpha} & H^1(K_S/K_\infty, \mathbf{V}_{\bar{\rho}})\arrow{r} \arrow{d}{\beta} & \bigoplus_{v\in S} K_v^1(\mathbf{V}_{\bar{\rho}}/K_\infty) \arrow{d}{\gamma} \\
0\arrow{r} & \mathcal{R}_{p^\infty}(\mathbf{A}(\rho)/K_\infty)[\varpi] \arrow{r} & H^1(K_S/K_\infty, \mathbf{A}(\rho))[\varpi] \arrow{r}  & \bigoplus_{v\in S} K_v^1(\mathbf{A}(\bar{\rho})/K_\infty)[\varpi].
\end{tikzcd}
\end{equation}
Since the kernels of $\beta$ and $\gamma$ are finite, it follows that the kernel and cokernel of $\alpha$ are both finite. Thus we have shown that $\mathcal{R}_{p^\infty}(\mathbf{A}(\rho)/K_\infty)[\varpi]$ is finite if and only if $\mathcal{R}(\mathbf{V}_{\bar{\rho}}/K_\infty)$ is finite. Thus, (1) is equivalent to $\mathcal{R}(\mathbf{V}_{\bar{\rho}}/K_\infty)$ being finite. On the other hand, it follows from the exact sequence \eqref{PTeqn} that $\mathcal{R}(\mathbf{V}_{\bar{\rho}}/K_\infty)$ is finite if and only if (2) holds. Thus we have shown that (1) and (2) are equivalent. The equivalence (2) and (3) follows from the argument in the proof of \cite[Theorem 3.2]{DRS}.
\end{proof}

\begin{corollary}\label{obviouscor}
    Let $\rho$ be as in Theorem \ref{Conj A criterion} and assume that $H^2(K_S/K_n, \mathbf{V}_{\bar{\rho}})=0$ for some value of $n\geq 0$. Then, we have that $\Rfine{\mathbf{A}(\rho)}{K_\infty}$ is cotorsion over $\Lambda$ and $\mu_\rho=0$.
\end{corollary}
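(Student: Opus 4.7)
The plan is to propagate the vanishing of $H^2$ from the finite layer $K_n$ up to the infinite level $K_\infty$, and then invoke the equivalence of conditions (1) and (3) in Theorem \ref{Conj A criterion}. To this end, set $\Gamma_n:=\op{Gal}(K_\infty/K_n)\simeq \Z_p$ and consider the Hochschild--Serre spectral sequence associated with the normal inclusion $\op{Gal}(K_S/K_\infty)\trianglelefteq \op{Gal}(K_S/K_n)$ with quotient $\Gamma_n$. Since $\Gamma_n\simeq \Z_p$ has $p$-cohomological dimension one, and the coefficient module is finite and $p$-primary, the spectral sequence degenerates to a short exact sequence in total degree two, yielding
\[0\to H^1(\Gamma_n, H^1(K_S/K_\infty,\mathbf{V}_{\bar\rho^*}))\to H^2(K_S/K_n,\mathbf{V}_{\bar\rho^*})\to H^0(\Gamma_n, H^2(K_S/K_\infty,\mathbf{V}_{\bar\rho^*}))\to 0.\]

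The first key step is to match the module in the hypothesis with the one in condition (3) of Theorem \ref{Conj A criterion}, namely the Tate-dual residual representation $\mathbf{V}_{\bar\rho^*}$. Granting this, the vanishing of $H^2(K_S/K_n,\mathbf{V}_{\bar\rho^*})$ forces the right-hand term in the displayed sequence to vanish. The second step is a pro-$p$ fixed-point argument: since $\Gamma_n$ is pro-$p$ and $H^2(K_S/K_\infty,\mathbf{V}_{\bar\rho^*})$ is a discrete $p$-primary $\Gamma_n$-module, the vanishing of the $\Gamma_n$-invariants forces the module itself to vanish. This follows from the standard orbit-counting fact that a nontrivial finite $p$-group acting on a nonzero $\F_p$-vector space has a nonzero fixed subspace, applied to the socle $H^2(K_S/K_\infty,\mathbf{V}_{\bar\rho^*})[\varpi]$ at sufficiently small open subgroups of $\Gamma_n$. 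Thus $H^2(K_S/K_\infty,\mathbf{V}_{\bar\rho^*})=0$, which is \emph{a fortiori} finite.

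Finally, I would invoke Theorem \ref{Conj A criterion} to conclude that $\Rfine{\mathbf{A}(\rho)}{K_\infty}$ is $\Lambda$-cotorsion with $\mu_\rho=0$. There is no substantive obstacle to overcome: the argument rests on two standard facts, namely the $p$-cohomological dimension of $\Z_p$ and the elementary pro-$p$ invariants principle, which together make the corollary a direct formal consequence of Theorem \ref{Conj A criterion}. The only delicate point is book-keeping, ensuring that the finite-layer hypothesis is applied to exactly the Tate-dual residual module that governs condition (3) of that theorem; this is why the corollary is labelled ``obviouscor''.
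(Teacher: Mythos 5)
Your argument is correct and is essentially the paper's own proof: descend the vanishing of $H^2$ at the finite layer to the vanishing of the $\op{Gal}(K_\infty/K_n)$-invariants of $H^2(K_S/K_\infty,\cdot)$ via Hochschild--Serre together with $\op{cd}_p(\Z_p)=1$, kill the whole module by the pro-$p$ fixed-point principle, and conclude by Theorem \ref{Conj A criterion}. You are also right to flag the $\mathbf{V}_{\bar{\rho}}$ versus $\mathbf{V}_{\bar{\rho}^*}$ bookkeeping: the corollary's hypothesis and the paper's proof are written with $\mathbf{V}_{\bar{\rho}}$, whereas condition (3) of Theorem \ref{Conj A criterion} concerns $\mathbf{V}_{\bar{\rho}^*}$, so the finite-layer vanishing hypothesis must indeed be applied to the Tate-dual residual module for the final citation to go through.
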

\begin{proof}
    Since $\Gamma^{p^n}=\op{Gal}(K_\infty/K_n)$ has $p$ cohomological dimension $1$, the implication
    \[H^2(K_S/K_n, \mathbf{V}_{\bar{\rho}})=0\Rightarrow H^2(K_S/K_\infty, \mathbf{V}_{\bar{\rho}})^{\op{Gal}(K_\infty/K_n)}=0\] holds (cf. \cite[Exercise 4 (ii), p. 119]{NSW}). Since $\op{Gal}(K_\infty/K_n)$ is a pro-$p$ group, we have the following implication:
    \[H^2(K_S/K_\infty, \mathbf{V}_{\bar{\rho}})^{\op{Gal}(K_\infty/K_n)}=0\Rightarrow H^2(K_S/K_\infty, \mathbf{V}_{\bar{\rho}})=0.\]
    Finally, by Theorem \ref{Conj A criterion}, $H^2(K_S/K_\infty, \mathbf{V}_{\bar{\rho}})=0$ implies that $\Rfine{\mathbf{A}(\rho)}{K_\infty}$ is cotorsion over $\Lambda$ and $\mu_\rho=0$.
\end{proof}

\begin{theorem}\label{secondthm}
    Let $K_\infty$ and $\rho$ be as in Theorem \ref{Conj A criterion} and assume that the weak Leopoldt conjecture holds for $\rho^*$ over $K_\infty$. Then the following are equivalent:
    \begin{enumerate}
        \item The fine Selmer group $\Rfine{\mathbf{A}(\rho)}{K_\infty}$ is cotorsion over $\Lambda$ and $\mu_\rho=0$.
        \item $H^2(K_S/K_\infty, \mathbf{V}_{\bar{\rho}^*})=0$,
    \end{enumerate}
\end{theorem}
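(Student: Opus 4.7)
The plan is essentially to chain together the two results already established in the preceding part of the section, namely Theorem \ref{Conj A criterion} and Proposition \ref{H^2 equivalence propn}. The new ingredient here, compared to Theorem \ref{Conj A criterion}, is the weak Leopoldt hypothesis on $\rho^*$, and it is used precisely to upgrade finiteness of $H^2(K_S/K_\infty,\mathbf{V}_{\bar{\rho}^*})$ to vanishing.

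First I would recall that Theorem \ref{Conj A criterion} applied to $\rho$ gives the equivalence of (1) with the finiteness of $H^2(K_S/K_\infty,\mathbf{V}_{\bar{\rho}^*})$; this step uses only the hypotheses on $K_\infty/K$ and on $\rho$, which are already in force. Next, under the weak Leopoldt conjecture for $\rho^*$ over $K_\infty$, Proposition \ref{H^2 equivalence propn} says that $H^2(K_S/K_\infty,\mathbf{V}_{\bar{\rho}^*})$ is finite if and only if it vanishes. Composing the two equivalences yields (1)$\Leftrightarrow$(2).

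For bookkeeping one should verify that Proposition \ref{H^2 equivalence propn} is stated in the form we need it: its input is the weak Leopoldt conjecture for $\rho^*$, i.e.\ $H^2(K_S/K_\infty,\mathbf{A}(\rho^*))=0$, and its conclusion is exactly the equivalence of finiteness and vanishing for $H^2(K_S/K_\infty,\mathbf{V}_{\bar{\rho}^*})$. The only non-trivial point inside that proposition is the use of \cite[Theorem 4.7]{ochivenjakob} to conclude that $H^1(K_S/K_\infty,\mathbf{A}(\rho^*))^\vee$ has no non-zero finite $\Lambda$-submodule, which then forces a finite $\Lambda$-submodule of it (namely $H^2(K_S/K_\infty,\mathbf{V}_{\bar{\rho}^*})^\vee$, obtained from the long exact sequence associated to multiplication by $\varpi$ on $\mathbf{A}(\rho^*)$) to be trivial. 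This is already handled, so in the present proof I would simply cite it.

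There is no substantive obstacle: the proof is a two-line combination of Theorem \ref{Conj A criterion} and Proposition \ref{H^2 equivalence propn}. The only thing worth making explicit is that the residual representation of $\rho^*$ is $\bar{\rho}^*$, so that the cohomology group appearing in both inputs is literally the same $H^2(K_S/K_\infty,\mathbf{V}_{\bar{\rho}^*})$, and hence the two equivalences can be concatenated without any compatibility check.
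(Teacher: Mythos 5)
Your proposal is correct and is essentially identical to the paper's own proof: both simply concatenate the equivalence from Theorem \ref{Conj A criterion} (condition (1) iff finiteness of $H^2(K_S/K_\infty,\mathbf{V}_{\bar{\rho}^*})$) with Proposition \ref{H^2 equivalence propn} (finiteness iff vanishing, under weak Leopoldt for $\rho^*$). Your extra remark that the residual representation of $\rho^*$ is $\bar{\rho}^*$, so the two cohomology groups literally coincide, is a reasonable bookkeeping check that the paper leaves implicit.
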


\begin{proof}
    It follows from Theorem \ref{Conj A criterion} that (1) is equivalent to the finiteness of $H^2_{\op{Iw}}(K_S/K_\infty, \mathbf{V}_{\bar{\rho}^*})$. On the other hand, Proposition \ref{H^2 equivalence propn} asserts that (2) is also equivalent to the finiteness of $H^2_{\op{Iw}}(K_S/K_\infty, \mathbf{V}_{\bar{\rho}^*})$. Thus, (1) and (2) are equivalent.
\end{proof}

\begin{proposition}
    Assume that the conditions of Theorem \ref{Conj A criterion} are satisfied. Then we have that 
    \[\begin{split} & \dim H^2_{\op{Iw}}(K_S/K_\infty, \mathbf{V}_{\bar{\rho}^*})\\ =&\dim \mathcal{R}(\mathbf{V}_{\bar{\rho}}/K_\infty)+\sum_{v\in S} \dim K_v^0(\mathbf{V}_{\bar{\rho}}/K_\infty)-\dim H^0(K_\infty, \mathbf{V}_{\bar{\rho}}/K_\infty).\end{split}\]
\end{proposition}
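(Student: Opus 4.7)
The plan is to read the dimension identity directly off the Poitou--Tate four-term exact sequence \eqref{PTeqn}, using that, under the hypotheses of Theorem~\ref{Conj A criterion}, every term in that sequence is finite-dimensional as an $\F$-vector space.

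First I would record the exact sequence of $\F$-vector spaces
\[
0\rightarrow H^0(K_\infty,\mathbf{V}_{\bar{\rho}})\rightarrow \bigoplus_{v\in S} K_v^0(\mathbf{V}_{\bar{\rho}}/K_\infty)\rightarrow H^2_{\op{Iw}}(K_S/K_\infty, \mathbf{V}_{\bar{\rho}^*})\rightarrow \mathcal{R}(\mathbf{V}_{\bar{\rho}}/K_\infty)\rightarrow 0
\]
from \eqref{PTeqn}, noting that all maps are $\F$-linear since the coefficient module $\mathbf{V}_{\bar{\rho}}$ (and its Tate dual) is an $\F$-vector space.

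Next I would check that each term is finite-dimensional over $\F$. The equivalence of (1) and (2) in Theorem~\ref{Conj A criterion} shows $H^2_{\op{Iw}}(K_S/K_\infty, \mathbf{V}_{\bar{\rho}^*})$ is finite, hence finite-dimensional over $\F$; by the exactness above, the quotient $\mathcal{R}(\mathbf{V}_{\bar{\rho}}/K_\infty)$ is then also finite-dimensional. For the local terms, Assumption~\ref{asszpextn} ensures that each $v\in S$ has only finitely many primes $w$ above it in $K_\infty$, and each $H^0(K_{\infty,w},\mathbf{V}_{\bar{\rho}})$ is a subspace of the finite-dimensional $\F$-vector space $\mathbf{V}_{\bar{\rho}}$, so $K_v^0(\mathbf{V}_{\bar{\rho}}/K_\infty)$ is finite-dimensional; likewise $H^0(K_\infty,\mathbf{V}_{\bar{\rho}})\subseteq \mathbf{V}_{\bar{\rho}}$ is finite-dimensional.

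Finally, I would invoke the alternating-sum formula for $\F$-dimensions along a four-term exact sequence: if $0\to A\to B\to C\to D\to 0$ is exact, then $\dim A-\dim B+\dim C-\dim D=0$. Solving for $\dim C=\dim H^2_{\op{Iw}}(K_S/K_\infty, \mathbf{V}_{\bar{\rho}^*})$ yields exactly the claimed identity. No serious obstacle is expected; the entire content of the statement is the combination of Poitou--Tate duality (already packaged in \eqref{PTeqn}) with the finiteness consequences of Theorem~\ref{Conj A criterion}.
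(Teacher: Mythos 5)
Your proposal is correct and matches the paper's argument, which simply reads the dimension identity off the exactness of the four-term Poitou--Tate sequence \eqref{PTeqn}; your added finiteness checks (via Theorem \ref{Conj A criterion} and Assumption \ref{asszpextn}) are exactly the implicit content of that one-line proof.
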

\begin{proof}The result follows from the exactness of \eqref{PTeqn}.
\end{proof}
\begin{remark}\label{remark on rho=1} Consider the special case in which $\bar{\rho}$ is the trivial $1$-dimensional representation. Denote by $\mathcal{F}_S$ the maximal pro-$p$ abelian extension of $K_\infty$ in which all primes of $K_\infty$ are unramified and the primes $w\in S(K_\infty)$ are completely split. Set $\mathcal{Y}_S$ to denote the Galois group $\op{Gal}(\mathcal{F}_S/K_\infty)$ and note that the $\mu$-invariant of $\mathcal{Y}_S$ vanishes if and only if $\mathcal{Y}_S$ is finitely generated as a $\Z_p$-module. Moreover, if $\mu(\mathcal{Y}_S)=0$, then 
\[\lambda(\mathcal{Y}_S)=\op{rank}_{\Z_p} \mathcal{Y}_S\leq \dim (\mathcal{Y}_S\otimes \F_p)=\dim \mathcal{R}(\F_p/K_\infty).\]
Moreover we have equality if and only if $\mathcal{Y}_S$ has no nontrivial finite submodules. If $K_\infty=K_{\op{cyc}}$ is the cyclotomic $\Z_p$-extension of $K$, then the above result states that 
 \[\dim H^2_{\op{Iw}}(K_S/K_\infty, \mu_p)=\dim \mathcal{R}(\F_p/K_\infty)+|S|-1.\]
 \end{remark}

 \section{Galois deformations and unobstructedness}
In this section, we explore the interplay between the deformation theory of Galois representations considered over a $\Z_p$-extension and the structure of fine Selmer groups introduced in the previous section. Let $\mathbb{F}_q$ be a finite field of characteristic $p$, where $q = p^k$ for some integer $k \geq 1$, and let $\mathcal{O} := \operatorname{W}(\mathbb{F}_q)$ denote the ring of Witt vectors over $\mathbb{F}_q$. The ring $\mathcal{O}$ is a complete discrete valuation ring with uniformizer $p$ and residue field $\mathbb{F}_q$.

Let $G$ be a profinite group, and let $N$ be a closed normal subgroup of $G$ such that the quotient group $\Gamma := G/N$ is isomorphic to $\mathbb{Z}_p$, the additive group of $p$-adic integers. For each integer $m \geq 0$, define the finite quotient $\Gamma_m := \Gamma / \Gamma^{p^m} \cong \mathbb{Z}/p^m\mathbb{Z}$. Let $G_m \subseteq G$ denote the preimage of $\Gamma^{p^m}$ under the quotient map $G \twoheadrightarrow \Gamma$, so that $G/G_m \cong \Gamma_m$. In particular, we set $G_0 := G$.

\par Let $V$ be a finite-dimensional $\mathbb{F}_q$-vector space of dimension $d$, and let
$$
\bar{r} : G \to \operatorname{GL}(V) \cong \operatorname{GL}_d(\mathbb{F}_q)
$$
\noindent be a representation. For each $m \geq 0$, we write $\bar{r}_m := \bar{r}|_{G_m}$ for the restriction of $\bar{r}$ to the open subgroup $G_m \subseteq G$, and similarly define $\bar{r}_\infty := \bar{r}|_N$ to be the restriction of $\bar{r}$ to $N$. Let $\mathcal{C}_{\mathcal{O}}$ denote the category whose objects are Artinian local $\mathcal{O}$-algebras $(R, \mathfrak{m}_R)$ with residue field $R/\mathfrak{m}_R \cong \mathbb{F}_q$, and whose morphisms are local homomorphisms of $\mathcal{O}$-algebras. For each such $R$, there is a unique $\mathcal{O}$-algebra surjection $R \twoheadrightarrow \mathbb{F}_q$ compatible with the given structure. We also consider the larger category $\operatorname{CL}_{\mathcal{O}}$ of complete local $\mathcal{O}$-algebras $(R, \mathfrak{m}_R)$ with residue field $\mathbb{F}_q$, where morphisms are continuous local $\mathcal{O}$-algebra homomorphisms. Let $\operatorname{CNL}_{\mathcal{O}}$ be the subcategory of $R\in \op{CL}_{\cO}$ which are Noetherian.

\par We now define what it means to lift or deform the residual representation $\bar{r}$.

\begin{definition}
Let $(R, \mathfrak{m}_R) \in \op{CL}_{\mathcal{O}}$. A \emph{lift} of $\bar{r}$ to $R$ is a continuous representation
$r_R : G \to \operatorname{GL}_d(R)
$ such that the reduction $\bar{r}_R := r_R \mod \mathfrak{m}_R$ equals $\bar{r}$. Two lifts $r$ and $r'$ are said to be \emph{strictly equivalent} if there exists an element $g \in \ker\left(\operatorname{GL}_d(R) \to \operatorname{GL}_d(\mathbb{F}_q)\right)$ such that $r' = g r g^{-1}$. An \emph{$R$-deformation} of $\bar{r}$ is a strict equivalence class of lifts to $\op{GL}_d(R)$.
\end{definition}

\noindent Let $\operatorname{Def}_{\bar{r}}(R)$ be the set of deformations of $\bar{r}$ to $R$. Varying $R$ over $\mathcal{C}_{\mathcal{O}}$ defines a functor $\operatorname{Def}_{\bar{r}} : \mathcal{C}_{\mathcal{O}} \to \operatorname{Sets}$, which assigns to each Artinian local $\mathcal{O}$-algebra $R$ the set of deformations of $\bar{r}$ to $R$. Given a morphism $\varphi: R_1 \to R_2$ in $\mathcal{C}_{\mathcal{O}}$, and a deformation $r_1 : G \to \operatorname{GL}_d(R_1)$ of $\bar{r}$, we obtain a deformation \[r_2 := \op{GL}_d(\varphi) \circ r_1:G\rightarrow \op{GL}_d(R_2),\] where $\op{GL}_d(\varphi): \op{GL}_d(R_1)\rightarrow \op{GL}_d(R_2)$ is obtained by applying $\varphi$ entry-wise to the matrix entries of $r_1$. The condition that $\varphi$ is a local $\mathcal{O}$-algebra homomorphism ensures that this construction respects strict equivalence and compatibility with the residual representation. Likewise there are deformation functors associated to $\bar{r}_m$ and $\bar{r}_\infty$, denoted $\op{Def}_m$ and $\op{Def}_\infty$ respectively. 

In many cases, the deformation functor $\operatorname{Def}_{m}$ is representable or pro-representable by a complete Noetherian local ring, in the following sense.

\begin{definition}
The deformation functor $\operatorname{Def}_m$ is \emph{pro-representable} if there exists $\mathcal{R}_m\in \op{CL}_{\cO}$ and a universal deformation \[r_m^{\op{univ}} : G_m \to \operatorname{GL}_d(\mathcal{R}_m)\]
\noindent such that for every $R \in \mathcal{C}_{\mathcal{O}}$, the natural map$$
\operatorname{Hom}_{\operatorname{CNL}_{\mathcal{O}}}(\mathcal{R}_m, R) \longrightarrow \operatorname{Def}_m(R)
$$
\noindent given by composition with $\varrho_m$ is a bijection. Thus, deformations of $\bar{r}_m$ to $R$ correspond precisely to local $\mathcal{O}$-algebra homomorphisms from $\mathcal{R}_m$ to $R$.
\end{definition}
\par Suppose that $\operatorname{Def}_m$ is pro-representable for all $m$, the system of rings $\{\mathcal{R}_m\}_{m \geq 0}$ admits natural transition maps $\mathcal{R}_m \to \mathcal{R}_n$ for $m \geq n$. We may then form the inverse limit $\mathcal{R}_\infty := \varprojlim_n \mathcal{R}_n$. We note  that even if each algebra in the inverse system is Noetherian, the inverse limit $\mathcal{R}_\infty$ need not be.

\begin{proposition}
Assume that each of the deformation functors $\op{Def}_m$ is pro-representable by $\mathcal{R}_m$. Then $\operatorname{Def}_\infty$ is pro-represented by the inverse limit ring $
\mathcal{R}_\infty := \varprojlim_m \mathcal{R}_m
$.
\end{proposition}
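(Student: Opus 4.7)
The plan is to verify directly the universal property of $\mathcal{R}_\infty$ as a pro-representing object of $\op{Def}_\infty$, by combining the pro-representability of each $\op{Def}_m$ with (i) a direct-limit presentation of $\op{Def}_\infty(R)$ at Artinian coefficients and (ii) the fact that continuous maps from an inverse limit of complete local rings into a discrete ring factor through a finite level. Concretely, for each $R \in \cC_\cO$, I want to produce a natural bijection
\[
\op{Hom}_{\op{CL}_\cO}(\mathcal{R}_\infty, R) \xrightarrow{\sim} \op{Def}_\infty(R),
\]
that is compatible with the universal deformations.

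First, I would show that for $R \in \cC_\cO$ the restriction maps $\op{Def}_m(R) \to \op{Def}_\infty(R)$ (restricting deformations from $G_m$ to $N$), together with the restriction transitions $\op{Def}_m(R) \to \op{Def}_{m'}(R)$ for $m \leq m'$, assemble into a bijection $\varinjlim_m \op{Def}_m(R) \xrightarrow{\sim} \op{Def}_\infty(R)$. Since $R$ is Artinian local with residue field $\F_q$, it is a finite ring and $\op{GL}_d(R)$ is a finite group; consequently, every continuous lift $r_\infty : N \to \op{GL}_d(R)$ of $\bar{r}_\infty$ has finite image and open kernel. Using the conjugation action of $G$ on $N$ together with the globally defined residual representation $\bar{r}$, one shows that $r_\infty$ extends, up to strict equivalence, to a continuous lift of $\bar{r}_m$ on some $G_m$; the corresponding argument at a higher level of the tower handles the injectivity of the direct-limit map.

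Next, pro-representability of each $\op{Def}_m$ by $\mathcal{R}_m$ rewrites this as
\[
\op{Def}_\infty(R) \;=\; \varinjlim_m \op{Hom}_{\op{CL}_\cO}(\mathcal{R}_m, R).
\]
To identify the right-hand side with $\op{Hom}_{\op{CL}_\cO}(\mathcal{R}_\infty, R)$, I would argue that any continuous local $\cO$-algebra homomorphism $\phi : \mathcal{R}_\infty \to R$ has open kernel in the inverse-limit topology and therefore contains a basic open neighborhood of the form $\pi_m^{-1}(\mathfrak{m}_{\mathcal{R}_m}^k)$, where $\pi_m : \mathcal{R}_\infty \to \mathcal{R}_m$ is the canonical projection. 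Surjectivity of $\pi_m$ yields a factorization $\phi = \psi \circ \pi_m$ for some $\psi : \mathcal{R}_m \to R$, producing an element of $\varinjlim_m \op{Hom}(\mathcal{R}_m, R)$; compatibility with the transitions $\mathcal{R}_m \to \mathcal{R}_n$ for $m \geq n$ makes the construction well-defined, and the reverse map is just composition with the projections. The universal deformation $r_\infty^{\op{univ}} : N \to \op{GL}_d(\mathcal{R}_\infty)$ is then obtained by assembling the restrictions $r_m^{\op{univ}}|_N$ through the universal property of $\varprojlim_m \mathcal{R}_m$.

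The main obstacle will be the surjectivity of $\varinjlim_m \op{Def}_m(R) \to \op{Def}_\infty(R)$: one must verify that every continuous deformation of $\bar{r}_\infty$ to an Artinian $R$ is (up to strict equivalence) the restriction to $N$ of a deformation of $\bar{r}_m$ at a finite level. Once this is in place, the rest of the argument is formal and reduces to the standard topological description of continuous local homomorphisms out of an inverse limit of complete local Noetherian rings.
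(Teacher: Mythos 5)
Your proposal is correct and follows essentially the same route as the paper: identify $\op{Def}_\infty$ with $\varinjlim_m \op{Def}_m$ by showing every deformation of $\bar{r}_\infty$ to a finite Artinian $R$ has open kernel and hence extends to some $G_m$ via the (eventually trivial) conjugation action of $\Gamma^{p^m}$, then pass to pro-representing objects. The step you flag as the ``main obstacle'' is exactly the step the paper carries out, by writing $G_n \cong N \rtimes \Gamma^{p^n}$ with $\Gamma^{p^n}$ acting trivially on $N/\ker(\rho)$ and extending $\rho$ trivially on the $\Gamma^{p^n}$-factor; your treatment of the formal $\op{Hom}$-side identification is in fact more explicit than the paper's.
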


\begin{proof}
Consider the exact sequence of profinite groups
\[1 \longrightarrow N \longrightarrow G \longrightarrow \Gamma \longrightarrow 1.\]
\noindent Choose a topological generator $\gamma$ of $\Gamma$, and fix a continuous section (i.e., a splitting of the above exact sequence) $
s: \Gamma \to G,
$ sending $\gamma \mapsto \tilde{\gamma} \in G$. This allows us to view $G$ as a semidirect product $
G \cong N \rtimes \Gamma,
$ with $\Gamma$ acting on $N$ by conjugation. That is, the conjugation action of $\gamma \in \Gamma$ on $N$ is given by $g \mapsto \tilde{\gamma} g \tilde{\gamma}^{-1}$.
\par Let $A$ be an object of $\mathcal{C}_\mathcal{O}$, and let $
\rho: N \to \operatorname{GL}_d(A)
$ be a deformation of $\bar{r}_\infty$. Note that $N_1:=\ker(\rho)$ is an open normal subgroup of $N$, and hence of finite index. There exists an integer $n$ such that the subgroup $\Gamma^{p^n}$ acts trivially on $N/N_1$ by conjugation. Let $G_n$ denote the inverse image in $G$ of $\Gamma^{p^n} \subset \Gamma$, so that $G_n = \langle N, \tilde{\gamma}^{p^n} \rangle$.

Let $H_1 := \Gamma^{p^n} \subset \Gamma$ be the finite index subgroup such that $H_1$ acts trivially on the image of $\rho$, i.e., such that for every $\sigma \in H_1$ and $g \in N$, we have $
\rho(\sigma g \sigma^{-1}) = \rho(g)
$. This condition ensures that $\rho$ extends in a natural way to a representation of the semidirect product $G_n = N \rtimes H_1$, where we define $
\rho(g \cdot \sigma) := \rho(g)
$ for $g \in N$, $\sigma \in H_1$. Thus, $\rho$ extends to a deformation of $\bar{r}_n$ to $A$, i.e., to an element of $\operatorname{Def}_n(A)$. In other words, every deformation of $\bar{r}_\infty$ to $A$ arises as the restriction to $N$ of a deformation of $\bar{r}_n$ for sufficiently large $n$. This implies that there is a natural identification
\[\operatorname{Def}_\infty = \varinjlim_n \operatorname{Def}_n,\]
\noindent where the colimit is taken over the directed system of functors $\operatorname{Def}_n$ as $n \to \infty$. The transition maps in this system are induced by the inclusions $G_n \subseteq G_m$ for $m \geq n$, and hence by the corresponding maps of representing rings $\mathcal{R}_m \to \mathcal{R}_n$.

\par Since each $\operatorname{Def}_n$ is pro-represented by $\mathcal{R}_n$, and limits of functors correspond to inverse limits of pro-representing objects, it follows that the colimit functor $\operatorname{Def}_\infty$ is pro-represented by the inverse limit $
\mathcal{R}_\infty := \varprojlim_n \mathcal{R}_n$. This completes the proof.
\end{proof}

\par For $R\in \op{CL}_{\cO}$ and a continuous representation $r:G\to\op{GL}_d(R)$. The adjoint representation $\operatorname{Ad}r$ is the $R[G]$-module of $d\times d$ matrices with $G$-action $\sigma\cdot M:=r(\sigma) M r(\sigma)^{-1}$. Because scalar matrices are fixed by conjugation, the trace map $\operatorname{tr}:\operatorname{Ad}r\to R$ is $G$-equivariant and splits $\operatorname{Ad}r\cong\operatorname{Ad}^0 r\oplus R \cdot\mathbf{I}_R$, where $\operatorname{Ad}^0 r=\ker\operatorname{tr}$ are the trace zero endomorphisms and $R\cdot\mathbf{I}_R$ is the trivial $1$-dimensional representation spanned by the identity matrix $\mathbf{I}_R$. One has that
$$
H^i(G,\operatorname{Ad}r)\cong H^i(G,\operatorname{Ad}^0 r)\oplus H^i(G,R).
$$
\noindent Consider the ring of dual numbers $D:=\F_q[\varepsilon]=\F_q[x]/(x^2)$, with $\varepsilon^2=0$. An infinitesimal lift of $\bar{r}$ is a lift $r:G\to\op{GL}_d(\F_q[\varepsilon])$ whose reduction modulo $\varepsilon$ is $\bar{r}$. Writing such a lift in the form
$$
r(\sigma)=\bigl(1+\varepsilon\,c(\sigma)\bigr)\,\bar{r}(\sigma)
$$
\noindent for $c(\sigma)\in\op{Ad}\bar{r}$, the condition that $r$ is a homomorphism is equivalent to the cocycle condition
\[c(\sigma\tau)=c(\sigma)+\sigma\cdot c(\tau).\]
This sets up a bijection between infinitesimal lifts of $\bar{r}$ and cocycles $Z^1(G, , \op{Ad}\bar{r})$. This in turn gives a bijection: \[\op{Def}_{\bar{r}}(D)\leftrightarrow H^1(G, \op{Ad}\bar{r}).\]Moreover, deformations with determinant equal to $\op{det}\bar{r}$ are in bijection with $H^1(G, \op{Ad}^0\bar{r})$. Let $R\in \mathcal{C}_{\cO}$ and $R':=R/J$ be a quotient of $R$. The quotient map $R\rightarrow R'$ is said to a small extension if the ideal $J$ is generated by a single element $t$ such that $t\cdot \mathfrak{m}_R=0$. For instance, the quotient map $\cO/p^{m+1}\rightarrow \cO/p^{m}$ is a small extension. Given a small extension $\pi: R\rightarrow R'$, the fibers of the induced map
\[\pi^*: \op{Def}_{\bar{r}}(R)\rightarrow \op{Def}_{\bar{r}}(R')\]are an $H^1(G, \op{Ad}\bar{r})$ pseudo-torsor. In other words, let $r\in \op{Def}_{\bar{r}}(R')$ and suppose that the fiber $(\pi^*)^{-1}(r)$ is nonempty and $r_1, r_2\in (\pi^*)^{-1}(r)$. Then there is a unique cohomology class $c\in H^1(G, \op{Ad}\bar{r})$ such that $r_2=(1+t c) r_1$. Moreover $\op{det}r_1=\op{det}r_2$ if and only if $c\in H^1(G, \op{Ad}^0\bar{r})$. 
\par The cohomological picture also explains obstruction theory: infinitesimal obstructions to lifting a deformation over a small extension lie in $H^2(G,\operatorname{Ad}\bar{r})$, and when one works with a determinant-fixed deformation problem the obstruction classes land in $H^2(G,\operatorname{Ad}^0 \bar{r})$. Given $r\in \op{Def}_{\bar{r}}(R')$ let $\widetilde{r}:G \rightarrow \op{GL}_d(R)$ be a set theoretic lift of $r$ (not necessarily a group homomorphism). Let $\mathcal{O}(r)$ be the class in $H^2(G, \op{Ad}\bar{r})$ associated with the cocycle
\[(\sigma_1, \sigma_2)\mapsto \widetilde{r}(\sigma_1\sigma_2)\widetilde{r}(\sigma_2)^{-1} \widetilde{r}(\sigma_1)^{-1}.\] This class is trivial if and only a lift $\widetilde{r}$ of $r$ exists which is a group homomorphism. The vanishing of $H^2(G,\operatorname{Ad}\bar{r})$ is the unobstructedness condition ensuring a smooth (power-series) structure for the corresponding universal deformation ring (provided it exists).

\begin{definition}The deformation functor $\op{Def}_n$ (resp. $\op{Def}_\infty$) is \emph{unobstructed} if $H^2(G_n, \op{Ad}^0\bar{r}_n)=0$ (resp. $H^2(H, \op{Ad}^0\bar{r}_\infty)=0$). 
\end{definition}

\noindent We note that the above definition makes sense even when the deformation functors $\op{Def}_n$ are not known to be pro-representable. 
\begin{proposition}
    Suppose that $\bar{r}_n$ is unobstructed for some $n\geq 0$, then it follows that $\bar{r}_\infty$ is also unobstructed. 
\end{proposition}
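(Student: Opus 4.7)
The plan is to apply the Hochschild--Serre spectral sequence to the short exact sequence of profinite groups
$$1 \to N \to G_n \to \Gamma^{p^n} \to 1$$
with coefficients in $\op{Ad}^0\bar{r}_n$, whose restriction to $N$ is $\op{Ad}^0\bar{r}_\infty$. Explicitly, I would set up
$$E_2^{i,j} = H^i\bigl(\Gamma^{p^n}, H^j(N, \op{Ad}^0\bar{r}_\infty)\bigr) \Longrightarrow H^{i+j}(G_n, \op{Ad}^0\bar{r}_n).$$
Since $\Gamma^{p^n} \cong \Z_p$ has $p$-cohomological dimension one and $\op{Ad}^0\bar{r}_\infty$ is a finite $\F_q$-module (hence $p$-primary), every column $E_2^{i,j}$ with $i \ge 2$ vanishes. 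All $d_r$-differentials are then zero, the spectral sequence degenerates at $E_2$, and the resulting two-step filtration on $H^2(G_n, \op{Ad}^0\bar{r}_n)$ produces the short exact sequence
$$0 \to H^1\bigl(\Gamma^{p^n}, H^1(N, \op{Ad}^0\bar{r}_\infty)\bigr) \to H^2(G_n, \op{Ad}^0\bar{r}_n) \to H^2(N, \op{Ad}^0\bar{r}_\infty)^{\Gamma^{p^n}} \to 0.$$

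With this in hand, the hypothesis that $\bar{r}_n$ is unobstructed forces the middle term to vanish, whence $H^2(N, \op{Ad}^0\bar{r}_\infty)^{\Gamma^{p^n}} = 0$. The second step is to upgrade this to the vanishing of $H^2(N, \op{Ad}^0\bar{r}_\infty)$ itself. For this I would invoke the standard fact that any nonzero discrete $p$-primary module carrying a continuous action of a pro-$p$ group has nontrivial fixed points: since $\Gamma^{p^n} \cong \Z_p$ is pro-$p$ and the cohomology in question is $p$-primary, vanishing of the invariants propagates to vanishing of the full group, giving $H^2(N, \op{Ad}^0\bar{r}_\infty) = 0$. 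This is exactly the assertion that $\bar{r}_\infty$ is unobstructed, and the two-step maneuver mirrors the one already carried out in the proof of Corollary \ref{obviouscor}.

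No genuine obstacle is anticipated. The only technical input is the degeneration of the Hochschild--Serre spectral sequence, which rests entirely on the classical cohomological dimension bound for $\Z_p$ on $p$-primary modules; once that observation is in place, the rest of the argument is formal.
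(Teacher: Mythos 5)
Your proof is correct and follows essentially the same route as the paper, whose own proof simply defers to the argument of Corollary \ref{obviouscor}: the vanishing of $H^2(G_n,\op{Ad}^0\bar{r}_n)$ forces the vanishing of $H^2(N,\op{Ad}^0\bar{r}_\infty)^{\Gamma^{p^n}}$ because the quotient $\Gamma^{p^n}\cong \Z_p$ has $p$-cohomological dimension one, and then the pro-$p$ fixed-point lemma upgrades this to $H^2(N,\op{Ad}^0\bar{r}_\infty)=0$. Your explicit Hochschild--Serre degeneration is just a spelled-out version of the same two steps, so no further comment is needed.
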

\begin{proof}
    The result follows from the proof of Corollary \ref{obviouscor}.
\end{proof}

\par Now let $K$ be a number field and $K_\infty$ be a $\Z_p$-extension of $K$ in which all primes $v\in S_p(K)$ are totally ramified. Let $S$ be a finite set of primes containing $S_p(K)$ and assume that all primes $v\in S$ are finitely decomposed in $K_\infty$. Let $r:\op{G}_{K, S}\rightarrow \op{GL}_d(\cO)$ be a continuous Galois representation and $\rho:=(\op{Ad} r)(1)$. Observe that $\rho^*=\op{Ad}r$. We shall now set $G:=\op{Gal}(K_S/K)$ and $N:=\op{Gal}(K_S/K_\infty)$ and $\Gamma=G/N$. 

\begin{proposition}\label{unobstructedness for def rings}
 Suppose that the weak Leopoldt conjecture holds for $\rho=\op{Ad}r(1)$. Then the following are equivalent:
    \begin{enumerate}
        \item $\bar{r}_{\infty}$ is unobstructed, 
        \item the fine Selmer group $\mathcal{R}_{p^\infty}(A(\rho)/K_\infty)$ is cotorsion over $\Lambda$ and $\mu_\rho=0$.
    \end{enumerate}
\end{proposition}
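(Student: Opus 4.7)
The plan is to apply Theorem~\ref{secondthm} to $\rho = \op{Ad}r(1)$ and match the resulting $H^2$-vanishing criterion with the unobstructedness of $\bar r_\infty$. The work therefore splits naturally into two steps: computing $V_{\bar\rho^*}$ explicitly, and then reconciling the resulting cohomology group with the obstruction space that defines unobstructedness.

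The first step is an explicit identification. Writing $\rho = \op{Ad}r \otimes \chi_{\op{cyc}}$, one computes $\rho^* = \rho^\vee(1) = (\op{Ad}r)^\vee$. Using the perfect $\op{G}_{K,S}$-equivariant trace pairing $(A,B)\mapsto \op{tr}(AB)$ on $\op{End}(\mathbf{T}_r)$, one obtains a canonical isomorphism $(\op{Ad}r)^\vee \cong \op{Ad}r$, and reducing modulo $p$ gives $V_{\bar\rho^*} \cong \op{Ad}\bar r$---exactly the observation recorded just before the statement of the proposition. Under the weak Leopoldt hypothesis, Theorem~\ref{secondthm} therefore identifies condition (2) with the vanishing $H^2(K_S/K_\infty, \op{Ad}\bar r) = 0$.

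The second step is to match this vanishing with the unobstructedness of $\bar r_\infty$, which by the definition given earlier in this section amounts to $H^2(\op{Gal}(K_S/K_\infty), \op{Ad}^0\bar r) = 0$. When $p \nmid d$, the trace map induces a Galois-equivariant splitting $\op{Ad}\bar r \cong \op{Ad}^0\bar r \oplus \F_q\cdot \mathbf{I}$, so that $H^2(K_S/K_\infty, \op{Ad}\bar r)$ decomposes as $H^2(K_S/K_\infty, \op{Ad}^0\bar r) \oplus H^2(K_S/K_\infty, \F_q)$. The two formulations of unobstructedness then coincide once the scalar-trace summand is controlled. I expect this bookkeeping to be the main, though mild, obstacle, since it tethers the equivalence to the $\mu=0$ statement for the $S$-ramified class group tower (cf.\ Remark~\ref{remark on rho=1}); all other ingredients are formal consequences of Theorem~\ref{secondthm} together with the self-duality of $\op{Ad}$.
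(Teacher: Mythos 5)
Your route is the same as the paper's: apply Theorem~\ref{secondthm} to $\rho=\op{Ad}r(1)$, use $\rho^*=\op{Ad}r$, and identify condition (2) with the vanishing of $H^2(K_S/K_\infty,\op{Ad}\bar r)$. The paper's proof is exactly these two sentences and then asserts that this vanishing \emph{is} condition (1).

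The ``bookkeeping obstacle'' you flag at the end is, however, a genuine issue rather than a mild one, and it is worth being precise about where it bites. The formal definition of unobstructedness in this section is $H^2(N,\op{Ad}^0\bar r_\infty)=0$, with trace-zero coefficients, whereas Theorem~\ref{secondthm} delivers the vanishing of $H^2(K_S/K_\infty,\op{Ad}\bar r)$ for the full adjoint. Assuming $p\nmid d$ so that $\op{Ad}\bar r\cong\op{Ad}^0\bar r\oplus\F_q$ splits Galois-equivariantly, the implication $(2)\Rightarrow(1)$ is immediate since $\op{Ad}^0\bar r$ is a direct summand. The converse $(1)\Rightarrow(2)$ is \emph{not} formal: it additionally requires $H^2(K_S/K_\infty,\F_q)=0$, i.e.\ the analogous statement for the trivial representation, which by Theorem~\ref{secondthm} and Remark~\ref{remark on rho=1} amounts to weak Leopoldt plus $\mu=0$ for the $S$-split class group tower over $K_\infty$ --- a nontrivial conjecture in general. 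The paper's own proof elides this entirely; it is only consistent if one reads ``unobstructed'' in the sense of the informal discussion earlier in that section (``the vanishing of $H^2(G,\op{Ad}\bar r)$ is the unobstructedness condition''), i.e.\ with full adjoint coefficients, in which case the equivalence is immediate and your first two steps already constitute a complete proof. So: your identification of the gap is correct, your proposed trace splitting is the right tool for one direction, but you should either adopt the full-adjoint reading of unobstructedness (matching the paper's implicit convention) or add the hypothesis $H^2(K_S/K_\infty,\F_q)=0$ to recover the other direction.
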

\begin{proof}
    As a consequence of Theorem \ref{secondthm}, condition (2) is equivalent to the vanishing of $H^2(K_S/K_\infty, \mathbf{V}_{\bar{\rho}^*})$. Since $\rho^*=\op{Ad}r$, (2) is equivalent to (1).
\end{proof}

\section{Ordinary deformations over the $\Z_p$-extension}
\par In this section, we refine certain results in \cite{BCM1} concerning the unobstructedness of ordinary deformation rings in the cyclotomic limit. The main theorem in \emph{loc.\ cit.} establishes unobstructedness under two separate hypotheses: first, that the deformation ring in the cyclotomic limit is Noetherian, and second, that the $\mu$-invariant of the relevant Selmer group vanishes. We prove that the Noetherian property is equivalent to the vanishing of the $\mu$-invariant of the Greenberg Selmer group of the adjoint representation. In fact, the tangent space of the deformation ring over the cyclotomic $\Z_p$-extension can be naturally identified with the residual Selmer group of the adjoint representation, which is finite if and only if the $\mu$-invariant vanishes. This result fits into the broader theme of the article, since it elucidates the relationship between Galois deformation rings over $\Z_p$-extensions and the Iwasawa theory of Selmer groups.

\par Throughout this section, $K$ will be a totally real number field and $p$ will denote an odd prime number which is unramified in $K$. Let $v\in S_p(K)$ be a prime of $K$. Denote by $K_{\op{cyc}}/K$ the cyclotomic $\Z_p$-extension of $K$ and let $K_n\subset K_{\op{cyc}}$ be its $n$-th layer. Note that $v$ is totally ramified in $K_{\op{cyc}}$. We fix a finite set of primes $S$ containing the primes of $K$ which lie above $p$. Let $\F_q$ be a finite field of characteristic $p$ and let $\cO$ be the valuation ring of a $p$-adic field with residue field isomorphic to $\F_q$. Let $\op{W}(\F_q)$ denote the ring of Witt vectors with residue field $\F_q$. Fix a uniformizer $\varpi$ of $\cO$. We let $r: \op{G}_{K, S}\rightarrow \op{GL}_d(\cO)$ be a continuous representation and let $\bar{r}:=r\pmod{(\varpi)}$ denote its residual representation. Assume that $\det r$ is the $p$-adic cyclotomic character $\chi$. 
\begin{definition}
    Let $r$ be a deformation of $\bar{r}$ and $v\in S_p(K)$. Then $\varrho$ is said to be ordinary of weight $2$ at $v$ if the restriction of $r$ to $\op{G}_{K_{v}}$ is isomorphic to $\mtx{\chi \alpha}{\ast}{0}{\alpha^{-1}}$ where $\alpha$ is a finite order unramified character. We say that $r$ is ordinary of weight $2$ if it is ordinary of weight $2$ at each prime $v\in S_p(K)$. 
\end{definition}
\noindent Throughout we make the following assumptions:
\begin{enumerate}
    \item $r$ (and thus $\bar{r}$) is ordinary of weight $2$,
    \item $\bar{r}_{|\op{G}_{K(\mu_p)}}$ is irreducible, 
    \item the restriction of $\bar{r}$ to each prime $v\in S_p(K)$ is absolutely indecomposable.
\end{enumerate}
\noindent Let $R_n$ be the deformation ring parameterizing ordinary liftings of $\bar{r}_{|\op{G}_{K_n}}$ over $\op{W}(\F_q)$.

\begin{proposition}
    Under the conditions above, $R_n$ belongs to $\op{CNL}_{\op{W}(\F_q)}$ and the natural map $R_{n+1}\rightarrow R_n$ is surjective for all $n$.
\end{proposition}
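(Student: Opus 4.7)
The plan is to treat the two assertions separately, building on standard deformation-theoretic arguments of Mazur and Ramakrishna, and using the absolute irreducibility of $\bar{r}|_{\op{G}_{K(\mu_p)}}$ to propagate Schur's lemma along the tower. For Noetherianness of $R_n$ I would argue in two steps: first pro-representability, then finiteness of the Zariski cotangent space. Pro-representability of the ordinary deformation functor for $\bar{r}|_{\op{G}_{K_n,S}}$ reduces to two inputs: (i) the endomorphism condition $\op{End}_{\op{G}_{K_n,S}}(\bar{r})=\F_q$, and (ii) relative representability of the ordinary condition at each $v\mid p$. For (i), since $[K_n:K]=p^n$ is coprime to $[K(\mu_p):K]$, which divides $p-1$, the fields $K_n$ and $K(\mu_p)$ are linearly disjoint over $K$, so $K_n K(\mu_p)/K(\mu_p)$ is Galois of $p$-power degree. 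By Clifford's theorem, $\bar{r}|_{\op{G}_{K_n K(\mu_p)}}$ decomposes as a direct sum of irreducibles permuted transitively by this $p$-group; the number of summands must divide both $d=2$ (which the ordinary-of-weight-$2$ condition forces) and $p^n$, and as $p$ is odd this forces a single summand. A fortiori $\bar{r}|_{\op{G}_{K_n}}$ is absolutely irreducible and (i) follows from Schur's lemma. For (ii), the hypothesis that $\bar{r}|_{\op{G}_{K_v}}$ is absolutely indecomposable uniquely determines the ordinary filtration, making the local ordinary functor relatively representable. Noetherianness then follows because the tangent space of $R_n$ embeds into the finite-dimensional $\F_q$-vector space $H^1(\op{G}_{K_n,S},\op{Ad}^0\bar{r})$.

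For surjectivity of $R_{n+1}\to R_n$ I would invoke the criterion that a morphism in $\op{CNL}_{\op{W}(\F_q)}$ is surjective if and only if the induced map on tangent spaces
\[\op{Def}_n^{\op{ord}}(\F_q[\varepsilon])\longrightarrow \op{Def}_{n+1}^{\op{ord}}(\F_q[\varepsilon])\]
is injective. Both ordinary tangent spaces sit inside the respective global $H^1(\cdot,\op{Ad}^0\bar{r})$, and the ordinary local condition at each $v\mid p$ is preserved on passing from $K_n$ to $K_{n+1}$, since $v$ is totally ramified in $K_\infty$ so the decomposition group only restricts to an open subgroup. It therefore suffices to show that
\[H^1(\op{G}_{K_n,S},\op{Ad}^0\bar{r})\longrightarrow H^1(\op{G}_{K_{n+1},S},\op{Ad}^0\bar{r})\]
is injective. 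By inflation-restriction, its kernel is $H^1\bigl(\op{Gal}(K_{n+1}/K_n),(\op{Ad}^0\bar{r})^{\op{G}_{K_{n+1}}}\bigr)$, and absolute irreducibility of $\bar{r}|_{\op{G}_{K_{n+1}}}$ forces the coefficient module to vanish by Schur's lemma.

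The principal technical obstacle I foresee is the careful verification that the local ordinary conditions assemble into a relatively representable global deformation problem in the sense of Ramakrishna, and that the preceding tangent-space calculation genuinely descends to the ordinary subspace with the correct Selmer-type local conditions imposed. A secondary but essentially routine matter is the inheritance of absolute indecomposability of $\bar{r}|_{\op{G}_{K_v}}$ by each layer, which is immediate since total ramification of $v$ in $K_\infty$ keeps the decomposition group full. The Clifford theory argument is clean in the rank-two setting at hand but would require substantive modification in higher rank.
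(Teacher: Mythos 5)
Your argument is correct in outline, but it is worth knowing that the paper does not prove this proposition at all: it simply cites \cite[Theorem 1.1 and Lemma 1.2]{BCM1}, where exactly the two assertions (each $R_n$ lies in $\op{CNL}_{\op{W}(\F_q)}$, and $R_{n+1}\to R_n$ is surjective) are established. What you have written is essentially a reconstruction of the Burungale--Clozel arguments: pro-representability via $\op{End}_{\op{G}_{K_n,S}}(\bar r)=\F_q$ together with relative representability of the ordinary condition (for which absolute indecomposability at $v\mid p$ is precisely the needed input), Noetherianness from finiteness of $H^1(\op{G}_{K_n,S},\op{Ad}^0\bar r)$, and surjectivity from injectivity of the tangent-space map, which reduces by inflation--restriction to the vanishing of $(\op{Ad}^0\bar r)^{\op{G}_{K_{n+1}}}$; since $p$ is odd, the scalar matrices meet the trace-zero matrices trivially, so Schur's lemma does close that step. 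The direction of your tangent-space criterion is also the right one.

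Two points in your write-up are imprecise and would need repair in a self-contained proof. First, in the Clifford-theory step, what divides $[K_nK(\mu_p):K(\mu_p)]=p^n$ is the number of \emph{distinct} isomorphism classes of irreducible constituents (the orbit length), not the number of summands; the case $\bar r|_{\op{G}_{K_nK(\mu_p)}}\simeq \chi\oplus\chi$ is not excluded by your divisibility count and must be ruled out separately, e.g.\ by noting that it would force the projective image of $\op{G}_{K(\mu_p)}$ to be a $p$-group in $\op{PGL}_2(\overline{\F}_p)$, which fixes a line, contradicting irreducibility over $K(\mu_p)$. Second, your justification for the inheritance of local absolute indecomposability is wrong as stated: total ramification makes the decomposition group full in $\op{Gal}(K_n/K)$, but $\op{G}_{K_{n,w}}$ is still a proper open subgroup of $\op{G}_{K_v}$, so the extension class could a priori die on restriction. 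It does not, but the correct reason is that the kernel of $H^1(\op{G}_{K_v},\F_q(\chi\alpha^2))\to H^1(\op{G}_{K_{n,w}},\F_q(\chi\alpha^2))$ vanishes because $\chi\alpha^2$ remains nontrivial on $\op{G}_{K_{n,w}}$ (here one uses that $p$ is unramified in $K$, so $\mu_p\not\subset K_v$ and $\bar\chi$ is ramified while $\alpha$ is unramified). With these two fixes your proof is a complete substitute for the citation.
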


\begin{proof}
    The result follows from \cite[Theorem 1.1 and Lemma 1.2]{BCM1}.
\end{proof}

\begin{proposition}
     The inverse limit $R_\infty:=\varprojlim_n R_n$ represents the functor of ordinary deformations of $\bar{r}_{|\op{G}_{F_{\op{cyc}}}}$.
\end{proposition}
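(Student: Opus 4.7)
The plan is to show that the functor $\op{Def}_\infty^{\op{ord}}$ of ordinary deformations of $\bar{r}|_{\op{G}_{K_{\op{cyc}}}}$ coincides with the colimit $\varinjlim_n \op{Def}_n^{\op{ord}}$, where $\op{Def}_n^{\op{ord}}$ is the corresponding functor at level $n$ pro-represented by $R_n$. Once this identification is in place, the standard equality $\op{Hom}_{\op{CL}_{\op{W}(\F_q)}}(\varprojlim_n R_n, A) = \varinjlim_n \op{Hom}_{\op{CNL}_{\op{W}(\F_q)}}(R_n, A)$ for Artinian $A$, combined with the pro-representability of each $\op{Def}_n^{\op{ord}}$ by $R_n$, yields that $R_\infty = \varprojlim_n R_n$ pro-represents $\op{Def}_\infty^{\op{ord}}$.

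For the identification, given an ordinary deformation $\rho: \op{Gal}(K_S/K_{\op{cyc}}) \to \op{GL}_d(A)$ with $A$ Artinian, the image of $\rho$ is finite, so following the abstract pro-representability argument given earlier in the paper, we can find $n$ large enough so that $\op{Gal}(K_{\op{cyc}}/K_n)$ acts trivially on $\op{im}\rho$ by conjugation and so that a chosen lift of $\op{Gal}(K_{\op{cyc}}/K_n)$ to $\op{G}_{K, S}$ lies in the kernel of $\bar{r}$. This produces an extension $r: \op{Gal}(K_S/K_n) \to \op{GL}_d(A)$ deforming $\bar{r}|_{\op{G}_{K_n}}$. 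To verify that $r$ is ordinary, fix $v' \in S_p(K_n)$ above some $v \in S_p(K)$; total ramification of $v$ in $K_{\op{cyc}}/K$ yields a unique prime $w$ of $K_{\op{cyc}}$ above $v'$, with $K_{\op{cyc}, w}/K_{n, v'}$ totally ramified. The ordinarity of $\rho|_{\op{G}_{K_{\op{cyc}, w}}}$ singles out a rank-one $A$-submodule $L$ of $A^d$ on which $\op{G}_{K_{\op{cyc}, w}}$ acts through a lift of $\bar{\chi}\bar{\alpha}_v$; the absolute indecomposability of $\bar{r}|_{\op{G}_{K_v}}$ together with the distinctness of the residual diagonal characters $\bar{\chi}\bar{\alpha}_v$ and $\bar{\alpha}_v^{-1}$ (which holds since $\chi$ has infinite order while $\alpha_v$ is of finite order) forces $L$ to be the unique such sub-line. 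Normality of $\op{G}_{K_{\op{cyc}, w}}$ in $\op{G}_{K_{n, v'}}$ then ensures $L$ is preserved by $r|_{\op{G}_{K_{n, v'}}}$, yielding the required upper-triangular form, and unramifiedness of the extended diagonal character $\alpha_{v'}$ follows from the fact that $K_{\op{cyc}, w}/K_{n, v'}$ is totally ramified, so the inertia of $K_{n, v'}$ sits inside that of $K_{\op{cyc}, w}$. Conversely, the restriction of any ordinary deformation of $\bar{r}|_{\op{G}_{K_n}}$ is visibly ordinary.

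The principal obstacle is this verification of ordinarity for the extension $r$, since the abstract proposition only guarantees an extension as a deformation, without a priori preserving local conditions. This is resolved by the uniqueness of the ordinary filtration, which ultimately rests on the absolute indecomposability and residual character distinctness hypotheses imposed at the start of the section. Once this is settled, the pro-representability conclusion follows formally from the abstract proposition applied to $\{\op{Def}_n^{\op{ord}}\}_n$ and $\{R_n\}_n$.
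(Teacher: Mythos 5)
The paper itself offers no argument here: it simply cites \cite[Corollary 1.5]{BCM1}. Your proposal reconstructs what is essentially the BCM1 argument (extend the deformation to a finite layer via the abstract proposition, then use uniqueness of the ordinary line plus normality of $\op{G}_{K_{\op{cyc},w}}$ in $\op{G}_{K_{n,v'}}$ to propagate the local condition), and the overall strategy is the right one. However, there are concrete gaps in the local verification.

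The most serious is the unramifiedness of the extended quotient character. You assert that since $K_{\op{cyc},w}/K_{n,v'}$ is totally ramified, ``the inertia of $K_{n,v'}$ sits inside that of $K_{\op{cyc},w}$.'' The containment goes the other way: $I_w = I_{v'}\cap \op{G}_{K_{\op{cyc},w}} \subseteq I_{v'}$, and total ramification says precisely that $I_{v'}/I_w \cong \op{Gal}(K_{\op{cyc},w}/K_{n,v'})\cong \Z_p$ is \emph{nontrivial}. Triviality of the quotient character on $I_w$ therefore does not give triviality on $I_{v'}$; a priori the extended character can be ramified through this $\Z_p$-quotient. One must add an argument — e.g.\ since $A$ is a finite ring, the character has finite image on $I_{v'}/I_w\cong\Z_p$, so after replacing $n$ by a sufficiently deeper layer it becomes trivial on inertia. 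Two secondary points: (i) your justification for $\bar{\chi}\bar{\alpha}_v\neq\bar{\alpha}_v^{-1}$ on $\op{G}_{K_{\op{cyc},w}}$ (``$\chi$ has infinite order while $\alpha_v$ has finite order'') concerns the characteristic-zero characters and proves nothing about their reductions restricted to a subgroup; the correct reason is that $\bar{\chi}$ is nontrivial on $I_w$ (using that $p$ is unramified in $K$, so $\mu_p\not\subset K_{\op{cyc},w}$) while $\bar{\alpha}_v$ is unramified. (ii) The hypothesis is absolute indecomposability of $\bar{r}|_{\op{G}_{K_v}}$, and indecomposability is not in general inherited by restriction to a subgroup; to get uniqueness of the line over $K_{\op{cyc},w}$ you need the restriction map $H^1(\op{G}_{K_v},\F_q(\bar{\chi}\bar{\alpha}_v^2))\to H^1(\op{G}_{K_{\op{cyc},w}},\F_q(\bar{\chi}\bar{\alpha}_v^2))$ to be injective, which again rests on the nontriviality of $\bar{\chi}\bar{\alpha}_v^2$ on $\op{G}_{K_{\op{cyc},w}}$. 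These points are all fixable, but as written the proof of ordinarity of the extension does not go through.
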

\begin{proof}
    See \cite[Corollary 1.5]{BCM1}.
\end{proof}
\par Let $\rho:=\op{Ad}r$ be the adjoint representation associated to $r$. We point out that the notation here is different from that in the previous section, where $\rho$ was taken to be Tate twist of $\op{Ad}r$. For $v\in S_p(K)$, since $\rho$ is $v$-ordinary, there is a short exact sequence of $\op{G}_{K_{v}}$-modules:
\[0\rightarrow \mathbf{A}(\rho)^+\rightarrow \mathbf{A}(\rho)\rightarrow \mathbf{A}(\rho)^-\rightarrow 0\] such that the inertia group at $v$ acts on the semi-simplification of $\mathbf{A}(\rho)^+$ (resp. $\mathbf{A}(\rho)^-$) via positive (resp. non-positive) powers of the cyclotomic character. Then the ordinary Selmer group $\op{Sel}_{\op{Gr}}(\mathbf{A}(\rho)/K_{\op{cyc}})$ consists of cohomology classes $f\in H^1(K_S/K_{\op{cyc}}, \mathbf{A}(\rho))$ which are unramified at all primes $v\nmid p$ and ordinary at the primes $v|p$ of $\op{K}_{\op{cyc}}$. In greater detail, given a prime $v\in \Omega_K$ such that $v\nmid p$, let 
\[\cH_v(\mathbf{A}(\rho)/K_{\op{cyc}}):=\bigoplus_{w|v} H^1_{\op{nr}}(K_{\op{cyc}, w}, \mathbf{A}(\rho))\] where $w$ ranges over the primes of $K_{\op{cyc}}$ which lie above $v$. On the other hand, for a prime $w|p$ of $K_{\op{cyc}}$, let $H^1_{\op{ord}}(K_{\op{cyc}, w}, \mathbf{A}(\rho))$ consist of the cohomology classes which give unramified classes on $\mathbf{A}(\rho)^-$. For $v|p$, set 
\[\cH_v(\mathbf{A}(\rho)/K_{\op{cyc}}):=H^1_{\op{ord}}(K_{\op{cyc}, w_v}, \mathbf{A}(\rho))\] where $w_v$ is the unique prime of $K_{\op{cyc}}$ which lies above $v$. 
\begin{definition}
    With respect to notation above, the Greenberg Selmer group associated to $\mathbf{A}(\rho)$ is defined as follows:
    \[\op{Sel}_{\op{Gr}}(\mathbf{A}(\rho)/K_{\op{cyc}}):=\op{ker}\left\{H^1(K_S/K_{\op{cyc}}, \mathbf{A}(\rho))\longrightarrow \bigoplus_{v\in S} \cH_v(\mathbf{A}(\rho)/K_{\op{cyc}})\right\}.\]
\end{definition}
\noindent Likewise, we may define a Greenberg Selmer group associated to the residual representation:
\[\op{Sel}_{\op{Gr}}(\mathbf{V}_{\bar{\rho}}/K_{\op{cyc}}):=\op{ker}\left\{H^1(K_S/K_{\op{cyc}}, \mathbf{V}_{\bar{\rho}})\longrightarrow \bigoplus_{v\in S} \cH_v(\mathbf{V}_{\bar{\rho}}/K_{\op{cyc}})\right\},\]
where
\[\cH_v(\mathbf{V}_{\bar{\rho}}/K_{\op{cyc}}):=\begin{cases}
    \bigoplus_{w|v} H^1_{\op{nr}}(K_{\op{cyc}, w}, \mathbf{V}_{\bar{\rho}}) \quad &\text{ if } v\nmid p,\\
    H^1_{\op{ord}}(K_{\op{cyc}, w_v}, \mathbf{V}_{\bar{\rho}})\quad &\text{ if } v| p.
\end{cases}\]
\noindent Identifying $\mathbf{V}_{\bar{\rho}}$ with $\mathbf{A}(\rho)[\varpi]$, the Kummer sequence 
\[0\rightarrow \mathbf{V}_{\bar{\rho}}\rightarrow \mathbf{A}(\rho)\xrightarrow{\times \varpi}\mathbf{A}(\rho)\rightarrow 0 \]
induces a natural map 
\[\alpha: \op{Sel}_{\op{Gr}}(\mathbf{V}_{\bar{\rho}}/K_{\op{cyc}})\longrightarrow \op{Sel}_{\op{Gr}}(\mathbf{A}(\rho)/K_{\op{cyc}})[\varpi].\] 
\begin{lemma}\label{alpha lemma}
    The map $\alpha$ is an injection with finite cokernel.
\end{lemma}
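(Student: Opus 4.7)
My plan is to apply the snake lemma to the commutative diagram obtained by feeding the Kummer short exact sequence
\[0\longrightarrow \mathbf{V}_{\bar{\rho}}\longrightarrow \mathbf{A}(\rho)\xrightarrow{\varpi}\mathbf{A}(\rho)\longrightarrow 0\]
into the defining sequences of the two Greenberg Selmer groups. Writing $\mathcal{J}_v(M)$ for the quotient of local cohomology at $v$ by the Greenberg subspace $\cH_v(M/K_{\op{cyc}})$, one obtains
\[\begin{tikzcd}[column sep = small, row sep = large]
0\arrow{r} & \op{Sel}_{\op{Gr}}(\mathbf{V}_{\bar{\rho}}/K_{\op{cyc}}) \arrow{r} \arrow{d}{\alpha} & H^1(K_S/K_{\op{cyc}},\mathbf{V}_{\bar{\rho}}) \arrow{r} \arrow{d}{\beta} & \bigoplus_{v\in S}\mathcal{J}_v(\mathbf{V}_{\bar{\rho}}) \arrow{d}{\gamma} \\
0\arrow{r} & \op{Sel}_{\op{Gr}}(\mathbf{A}(\rho)/K_{\op{cyc}})[\varpi] \arrow{r} & H^1(K_S/K_{\op{cyc}},\mathbf{A}(\rho))[\varpi] \arrow{r} & \bigoplus_{v\in S}\mathcal{J}_v(\mathbf{A}(\rho))[\varpi]
\end{tikzcd}\]
with exact rows. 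The lemma then follows from controlling $\ker\beta$ and $\ker\gamma_v$.

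For the injectivity of $\alpha$, the snake lemma embeds $\ker\alpha$ into $\ker\beta$, and the Kummer long exact sequence identifies $\ker\beta$ with $H^0(K_{\op{cyc}},\mathbf{A}(\rho))/\varpi$. Since $\bar{r}|_{\op{G}_{K(\mu_p)}}$ is assumed absolutely irreducible and $K(\mu_p)/K$ (prime-to-$p$) is linearly disjoint from $K_{\op{cyc}}/K$ (pro-$p$), the residual representation remains absolutely irreducible on restriction to $\op{G}_{K_{\op{cyc}}}$, and hence so does the lift $r$ over $\cK$. Consequently, the $\op{G}_{K_{\op{cyc}}}$-invariants of $\mathbf{A}(\rho)=\op{End}_{\cO}(\mathbf{T}_r)\otimes_\cO (\cK/\cO)$ reduce to scalar endomorphisms and form a copy of $\cK/\cO$, which is $\varpi$-divisible. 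Hence $\ker\beta=0$, and $\alpha$ is injective.

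For the finiteness of $\coker\alpha$, the snake lemma reduces the claim to bounding $\ker\gamma$. At each $v\in S$ with $v\nmid p$, Assumption~\ref{asszpextn} forces only finitely many primes $w\mid v$ in $K_{\op{cyc}}$, with each local extension $K_{\op{cyc},w}/K_v$ unramified; the residue field $\kappa_w$ then has absolute Galois group of prime-to-$p$ profinite order, so the $p$-primary unramified $H^1$ vanishes for both $\mathbf{V}_{\bar{\rho}}$ and $\mathbf{A}(\rho)$, making $\gamma_v$ trivial at such places. At each $v\mid p$, with $w_v$ the unique prime above $v$, the local Kummer sequence applied to the unramified quotient $\mathbf{A}(\rho)^-$ bounds $\ker\gamma_v$ by $H^0(K_{\op{cyc},w_v},\mathbf{A}(\rho)^-)/\varpi$, which is finite as a quotient of a cofinitely generated $\cO$-module by a uniformizer. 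The main obstacle is the injectivity step, which depends on the upgrade of absolute irreducibility from $\bar{r}|_{\op{G}_{K(\mu_p)}}$ to $r|_{\op{G}_{K_{\op{cyc}}}}$ over $\cK$; once this is in hand, the remaining arguments are standard cohomological bookkeeping.
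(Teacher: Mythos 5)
Your proposal follows the same route as the paper's proof: the comparison diagram induced by the Kummer sequence, with $\ker\alpha\hookrightarrow\ker\beta$ and $\coker\alpha\hookrightarrow\ker\gamma$. The only substantive difference is that the paper disposes of $\beta$ by citing \cite[Proposition 2.8]{GV} (it is an isomorphism), whereas you reprove the injectivity of $\beta$ from the divisibility of $H^0(K_{\op{cyc}},\mathbf{A}(\rho))$. Your conclusion there is correct, but the justification via ``linear disjointness of $K(\mu_p)$ and $K_{\op{cyc}}$'' is not the operative mechanism: linear disjointness only identifies $\op{Gal}(K_{\op{cyc}}(\mu_p)/K_{\op{cyc}})$ with $\op{Gal}(K(\mu_p)/K)$ and says nothing about the image of $\op{G}_{K_{\op{cyc}}}$ under $\bar{r}$. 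What one actually uses is that $K_{\op{cyc}}(\mu_p)/K(\mu_p)$ is pro-$p$ and $\bar{r}$ is two-dimensional with $p$ odd, so Clifford theory rules out both degenerations of $\bar{r}|_{\op{G}_{K_{\op{cyc}}(\mu_p)}}$ (scalar action, or two distinct characters swapped by an index-two subgroup, which a pro-$p$ quotient with $p$ odd cannot furnish); absolute irreducibility then passes to the larger group $\op{G}_{K_{\op{cyc}}}$, and Schur's lemma gives $H^0(K_{\op{cyc}},\mathbf{A}(\rho))\cong \cK/\cO$, which is $\varpi$-divisible. Two smaller points: at $v\nmid p$ the vanishing of the unramified subgroups makes $\gamma_v$ the full local Kummer map, whose kernel is $H^0(K_{\op{cyc},w},\mathbf{A}(\rho))/\varpi$ --- finite but not necessarily zero, so ``$\gamma_v$ trivial'' overstates it (finiteness is all that is needed); and the inclusion $\coker\alpha\hookrightarrow\ker\gamma$ uses the surjectivity of $\beta$, which is automatic from the long exact cohomology sequence but should be recorded.
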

\begin{proof}
    The map $\alpha$ fits into a commutative diagram:
    \begin{equation}\label{fdiagram}
\begin{tikzcd}[column sep = small, row sep = large]
0\arrow{r} & \op{Sel}_{\op{Gr}}(\mathbf{V}_{\bar{\rho}}/K_{\op{cyc}}) \arrow{r}\arrow{d}{\alpha} & H^1(K_S/K_{\op{cyc}}, \mathbf{V}_{\bar{\rho}})\arrow{r} \arrow{d}{\beta} & \bigoplus_{v\in S} \cH_v(\mathbf{V}_{\bar{\rho}}/K_{\op{cyc}}) \arrow{d}{\gamma}  \\
0\arrow{r} & \op{Sel}_{\op{Gr}}(\mathbf{A}(\rho)/K_{\op{cyc}})[\varpi]\arrow{r} & H^1(K_S/K_{\op{cyc}}, \mathbf{A}(\rho))[\varpi]\arrow{r}  &\bigoplus_{v\in S} \cH_v(\mathbf{A}(\rho)/K_{\op{cyc}})[\varpi],
\end{tikzcd}
\end{equation}
where the map $\gamma$ is a direct sum of maps 
\[\gamma_v: \cH_v(\mathbf{V}_{\bar{\rho}}/K_{\op{cyc}})\rightarrow \cH_v(\mathbf{A}(\rho)/K_{\op{cyc}})[\varpi].\]
The map $\beta$ is an isomorphism (see \cite[Proposition 2.8]{GV}). Since $\beta$ is injective, so is $\alpha$.
It is easy to see that each of the maps $\gamma_v$ has finite kernel. The cokernel of $\alpha$ is contained in the kernel of $\gamma$, which is finite.
\end{proof}

\begin{proposition}\label{mainpropsection5}
    With respect to notation above, the following conditions are equivalent:
    \begin{enumerate}
        \item The ordinary deformation ring $R_\infty$ is Noetherian.
        \item The residual Selmer group $\op{Sel}_{\op{Gr}}(\mathbf{V}_{\bar{\rho}}/K_{\op{cyc}})$ is finite. 
        \item The ordinary Selmer group $\op{Sel}_{\op{Gr}}(\mathbf{A}(\rho)/K_{\op{cyc}})$ is a cotorsion $\Lambda$-module and its $\mu$ invariant vanishes.
    \end{enumerate}
\end{proposition}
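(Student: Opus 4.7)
The plan is to establish the equivalences $(1)\Leftrightarrow(2)$ and $(2)\Leftrightarrow(3)$ separately; together they yield the proposition.

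The equivalence $(2)\Leftrightarrow(3)$ should follow from Lemma~\ref{alpha lemma} together with the structure theory of $\Lambda$-modules recalled in Section~3. Set $\mathcal{M}:=\op{Sel}_{\op{Gr}}(\mathbf{A}(\rho)/K_{\op{cyc}})$. One first observes (a standard fact which can be derived from Poitou--Tate duality and the finite decomposition of primes of $S$ in $K_{\op{cyc}}$) that $\mathcal{M}$ is cofinitely generated over $\Lambda$. Pontryagin duality then gives that $\mathcal{M}[\varpi]$ is finite if and only if $\mathcal{M}^\vee/\varpi\mathcal{M}^\vee$ is finite, which by the usual structure theorem is in turn equivalent to $\mathcal{M}^\vee$ being torsion over $\Lambda$ with vanishing $\mu$-invariant, i.e.\ condition (3). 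Since Lemma~\ref{alpha lemma} produces an injection $\alpha$ with finite cokernel from $\op{Sel}_{\op{Gr}}(\mathbf{V}_{\bar{\rho}}/K_{\op{cyc}})$ into $\mathcal{M}[\varpi]$, the finiteness of one group is equivalent to the finiteness of the other, which yields $(2)\Leftrightarrow(3)$.

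For $(1)\Leftrightarrow(2)$, the idea is to identify the Zariski tangent space of $R_\infty$ with $\op{Sel}_{\op{Gr}}(\mathbf{V}_{\bar{\rho}}/K_{\op{cyc}})$. Since $R_\infty\in\op{CL}_{\op{W}(\F_q)}$ is a complete local ring with residue field $\F_q$, the Cohen structure theorem gives that $R_\infty$ is Noetherian if and only if its cotangent space $\mathfrak{m}_{R_\infty}/\mathfrak{m}_{R_\infty}^2$ is finite-dimensional over $\F_q$. By the pro-representability of $R_\infty$, the $\F_q$-linear dual of this cotangent space is canonically in bijection with the set $\op{Def}_\infty(\F_q[\varepsilon])$ of ordinary deformations of $\bar{r}_{|\op{G}_{K_{\op{cyc}}}}$ to the dual numbers. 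Expanding any such lift in the form $(1+\varepsilon\, c(\sigma))\bar{r}(\sigma)$ and quotienting by coboundaries identifies this set with the subspace of $H^1(K_S/K_{\op{cyc}},\mathbf{V}_{\bar{\rho}})$ cut out, at each $v\mid p$, by the ordinarity condition on $c$; using the hypothesis that $\bar{r}_{|\op{G}_{K_v}}$ is absolutely indecomposable to pin down the ordinary filtration on any lift, this local condition is precisely $\cH_v(\mathbf{V}_{\bar{\rho}}/K_{\op{cyc}})=H^1_{\op{ord}}(K_{\op{cyc},w_v},\mathbf{V}_{\bar{\rho}})$. The tangent space is therefore $\op{Sel}_{\op{Gr}}(\mathbf{V}_{\bar{\rho}}/K_{\op{cyc}})$, giving $(1)\Leftrightarrow(2)$.

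The main technical step, and the place where care is required, is the last identification: that the ordinary lifting condition on infinitesimal deformations corresponds exactly to the Greenberg local subspace $H^1_{\op{ord}}$. This amounts to a local computation at each $v\mid p$ and relies crucially on the absolute indecomposability assumption to rigidify the filtration on lifts. Once this identification is in place, both equivalences are essentially formal consequences of representability and of Lemma~\ref{alpha lemma}.
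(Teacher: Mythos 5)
Your proposal is correct and follows essentially the same route as the paper: the equivalence $(1)\Leftrightarrow(2)$ via identifying $\op{Sel}_{\op{Gr}}(\mathbf{V}_{\bar{\rho}}/K_{\op{cyc}})$ with the mod-$p$ tangent space of $R_\infty$ (which the paper simply cites from the proof of Theorem~5.3 of \cite{BCM1}, while you sketch the dual-numbers computation), and $(2)\Leftrightarrow(3)$ via Lemma~\ref{alpha lemma} together with the structure theory of $\Lambda$-modules.
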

\begin{proof}
    The equivalence of (1) and (2) follows since $\op{Sel}_{\op{Gr}}(\mathbf{V}_{\bar{\rho}}/K_{\op{cyc}})$ can be identified with the mod-$p$ tangent space of $R_\infty$ (see the proof of \cite[Theorem 5.3]{BCM1}). From Lemma \ref{alpha lemma}, the condition (2) is equivalent to the finiteness of $\op{Sel}_{\op{Gr}}(\mathbf{A}(\rho)/K_{\op{cyc}})[\varpi]$. This is in turn equivalent to condition (3).
\end{proof}

\par Let us now recall the main result of \cite{BCM1}, which asserts that $R_\infty$ is formally smooth provided it is Noetherian and some further conditions are satisfied.

\begin{theorem}[Burungale--Clozel]\label{BC1thm}
    Assume that $\rho$ is automorphic, $\bar{\rho}$ is adequate and $\bar{\rho}_{|K(\mu_p)}$ in the sense of \cite[Definition 3.1.1]{PAllen}. Further assume that the $\mu$-invariants of $H^1(K_S/K_{\op{cyc}}, A(\rho))_{\op{tors}}$ and $H^1(K_S/K_{\op{cyc}}, A(\rho^*))_{\op{tors}}$ both vanish. Then if $R_\infty$ is Noetherian, then it is formally smooth, i.e.,  
    \[R_\infty\simeq \op{W}(\F_q)\llbracket X_1, \dots, X_s\rrbracket.\]
\end{theorem}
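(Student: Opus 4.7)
The plan is to establish formal smoothness of $R_\infty$ over $\op{W}(\F_q)$, which, combined with the Noetherian hypothesis, identifies it with a power series ring. By Proposition \ref{mainpropsection5}, the Noetherian assumption is already equivalent to finiteness of the mod-$p$ tangent space $\op{Sel}_{\op{Gr}}(\mathbf{V}_{\bar{\rho}}/K_{\op{cyc}})$, which bounds the number of generators $s$. To upgrade this to smoothness, I would verify that the obstructions to lifting ordinary deformations across small extensions vanish; this is equivalent to the vanishing of an ordinary variant of $H^2(K_S/K_{\op{cyc}}, \op{Ad}^0\bar{r})$, the trace part splitting off cleanly because $\det r=\chi$ is fixed.

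To control this obstruction group, I would apply global Poitou--Tate duality to rewrite the ordinary $H^2$ as the Pontryagin dual of a residual dual Selmer group of $\op{Ad}^0\bar{r}(1)$, noting that under local Tate duality the ordinary quotient condition at $v\mid p$ flips to a dual ordinary sub condition. A Kummer diagram analogous to the one in the proof of Lemma \ref{alpha lemma} then compares this residual dual Selmer to the $\varpi$-torsion of the integral Greenberg Selmer of $\mathbf{A}(\rho^*)$. The assumed vanishing of the $\mu$-invariants of $H^1(K_S/K_{\op{cyc}},\mathbf{A}(\rho))_{\op{tors}}$ and $H^1(K_S/K_{\op{cyc}},\mathbf{A}(\rho^*))_{\op{tors}}$ controls precisely these $\varpi$-torsion contributions and delivers the requisite finiteness; adequateness of $\bar{\rho}|_{K(\mu_p)}$ then kills residual $\op{G}_{K_{\op{cyc}}}$-invariants and upgrades finiteness to outright vanishing.

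The automorphy hypothesis is needed to access the $R=\mathbb{T}$ results of Allen (\cite{PAllen}, Definition 3.1.1 and the attendant theorems), which identify the relevant Greenberg Selmer group with a Hecke-theoretic object, thereby converting the $\mu$-invariant assumptions into vanishing of the dual Selmer. The main obstacle is the delicate bookkeeping of the ordinary local conditions under Poitou--Tate duality: ordinary cohomology at primes above $p$ is not self-dual under local Tate duality, so matching the ordinary condition imposed on $\mathbf{A}(\rho)$ against the dual condition on $\mathbf{A}(\rho^*)$ requires care, and integrating this with the adequateness and automorphy input to produce a clean vanishing statement for the dual residual Selmer is where the bulk of the technical work lies. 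Once that vanishing is in hand, formal smoothness of $R_\infty$ follows by a standard deformation-theoretic argument (obstructed lifts live in the vanishing $H^2$), and the stated identification with $\op{W}(\F_q)\llbracket X_1,\dots,X_s\rrbracket$ is immediate.
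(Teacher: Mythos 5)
The paper does not prove this statement at all: it is recalled verbatim as the main theorem of Burungale--Clozel \cite{BCM1} (the text introducing it reads ``Let us now recall the main result of \cite{BCM1}''), and no proof environment follows it. So there is no internal argument to compare your proposal against; what can be assessed is whether your sketch is a sound reconstruction of the cited result.

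Your outline has the right global shape (tangent space controlled by the Greenberg Selmer group, obstruction space controlled by a dual Selmer group via Poitou--Tate, automorphy feeding in through Allen's ordinary $R=\mathbb{T}$ theorems), but it has a genuine gap at the decisive step. You pass from \emph{finiteness} of the residual dual Selmer group to its \emph{vanishing} by invoking adequacy to ``kill residual $\op{G}_{K_{\op{cyc}}}$-invariants.'' That inference does not work: adequacy (and irreducibility of $\bar{\rho}|_{K(\mu_p)}$) controls $H^0$ and is a hypothesis for the Taylor--Wiles--Kisin patching input, but a finite nonzero obstruction group is perfectly compatible with those hypotheses, and finiteness of a dual Selmer group never by itself forces $H^2=0$. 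In \cite{BCM1} formal smoothness is not obtained by directly proving the vanishing of an ordinary $H^2$ over $K_{\op{cyc}}$; rather, once $R_\infty$ is Noetherian one writes $R_\infty$ as a quotient of $\op{W}(\F_q)\llbracket X_1,\dots,X_s\rrbracket$ with $s$ the dimension of the mod-$p$ tangent space, and the automorphy input (finiteness of each $R_n$ over the relevant Hecke/Iwasawa algebra) together with the two $\mu$-invariant hypotheses --- which guarantee the needed torsion-freeness/flatness of the cohomology over the tower --- yields a lower bound on the Krull dimension of $R_\infty$ that matches $s+1$, forcing the ideal of relations to vanish. Without that dimension-count mechanism (or an equivalent substitute), your argument does not close; as written, the claimed ``standard deformation-theoretic argument'' at the end rests on a vanishing statement you have not established.
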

\noindent As a consequence of Proposition \ref{mainpropsection5}, one obtains the following result.
\begin{corollary}\label{cor5.8}
    Let $\rho$ be as in Theorem \ref{BC1thm} and let $r:=\op{Ad}\rho$. Then the following are equivalent:
    \begin{enumerate}
        \item The ordinary Selmer group $\op{Sel}_{\op{Gr}}(\mathbf{A}(\rho)/K_{\op{cyc}})$ is a cotorsion $\Lambda$-module and its $\mu$ invariant vanishes.
        \item $R_\infty$ is a power series ring $\op{W}(\F_q)\llbracket X_1, \dots, X_s\rrbracket$.
    \end{enumerate}
\end{corollary}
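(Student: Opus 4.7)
The plan is to obtain the corollary by combining Proposition~\ref{mainpropsection5} with Theorem~\ref{BC1thm}, since both ingredients are already in place and the corollary is essentially a repackaging of their conclusions.

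First I would handle the easy direction, namely $(2) \Rightarrow (1)$. If $R_\infty$ is isomorphic to the formal power series ring $\op{W}(\F_q)\llbracket X_1, \dots, X_s\rrbracket$ in finitely many variables over the Noetherian local ring $\op{W}(\F_q)$, then $R_\infty$ is itself Noetherian. By the equivalence of conditions (1) and (3) in Proposition~\ref{mainpropsection5}, the Noetherianness of $R_\infty$ is equivalent to the statement that $\op{Sel}_{\op{Gr}}(\mathbf{A}(\rho)/K_{\op{cyc}})$ is $\Lambda$-cotorsion with vanishing $\mu$-invariant, which is precisely condition (1) of the corollary.

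For the converse $(1) \Rightarrow (2)$, I would again invoke Proposition~\ref{mainpropsection5}: the assumption that $\op{Sel}_{\op{Gr}}(\mathbf{A}(\rho)/K_{\op{cyc}})$ is $\Lambda$-cotorsion with vanishing $\mu$-invariant is equivalent to $R_\infty$ being Noetherian. At this point, the hypotheses on $\rho$ inherited from Theorem~\ref{BC1thm} (automorphy, adequacy of $\bar{\rho}$ and $\bar{\rho}_{|K(\mu_p)}$, and the vanishing of the $\mu$-invariants of $H^1(K_S/K_{\op{cyc}}, A(\rho))_{\op{tors}}$ and $H^1(K_S/K_{\op{cyc}}, A(\rho^*))_{\op{tors}}$) are in force, and the Burungale--Clozel formal smoothness theorem applies. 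It yields an isomorphism $R_\infty \simeq \op{W}(\F_q)\llbracket X_1, \dots, X_s\rrbracket$ for some finite $s$, which is condition (2).

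There is no real obstacle: the corollary is a clean two-step implication, with Proposition~\ref{mainpropsection5} bridging the Selmer-theoretic condition and Noetherianness of $R_\infty$, and Theorem~\ref{BC1thm} upgrading Noetherianness to formal smoothness under the stated automorphic hypotheses. The only point that requires mild care is to verify that the integer $s$ appearing in the conclusion of Theorem~\ref{BC1thm} is indeed finite (so that condition (2) is literally a power series ring in finitely many variables); this is built into the Burungale--Clozel conclusion and does not require additional argument.
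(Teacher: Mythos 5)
Your argument is correct and matches the paper's own proof exactly: both directions combine Proposition~\ref{mainpropsection5} (equating condition (1) with Noetherianness of $R_\infty$) with Theorem~\ref{BC1thm} (upgrading Noetherianness to formal smoothness), and the converse uses that a power series ring in finitely many variables over $\op{W}(\F_q)$ is Noetherian. No gaps.
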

\begin{proof}
    That the condition (1) is equivalent to the condition that $R_\infty$ is Noetherian is the assertion of Proposition \ref{mainpropsection5}. The condition (2) then follows from Theorem \ref{BC1thm}. This shows that (1) implies (2). The condition (2) implies that $R_\infty$ is in particular, Noetherian and thus by Proposition \ref{mainpropsection5}, condition (1) follows from (2). We have thus shown that (1) and (2) are equivalent.
\end{proof}

\noindent The unobstructedness of the deformation ring follows from Theorem \ref{BC1thm} once the Noetherian property is established. On the other hand, Proposition \ref{unobstructedness for def rings} from the previous section is a condition for unobstructedness which does not guarantee that the deformation ring $R_\infty$ is Noetherian.

\section{Bockstein maps and Massey products}
\par In the next section, we establish the main theorems of the paper, demonstrating that the vanishing of the $\mu$-invariant of the fine Selmer group is equivalent to a criterion formulated in terms of higher Massey products. Our strategy is to translate the Iwasawa-theoretic problem into a question about the structure of certain Galois cohomology groups, where Massey products naturally arise. This is made possible by the work of Lam, Liu, Sharifi, Wake, and Wang \cite{Lametal}, who analyzed the fine structure of Iwasawa cohomology groups and clarified their connections with Bockstein maps and higher Massey products. In this section, we set up relevant notation (consistent with \emph{loc. cit.}) and recall some important results in this context.
\subsection{Bockstein-maps}
\par Let $G$ be a profinite group with cohomological dimension $d$ and $H$ be a pro-$p$ quotient of $G$ by a closed normal subgroup $N$. Assume throughout that $G$ has finite $p$-cohomological dimension. Define the completed group algebra
$$
\Omega := \F_p\llbracket H \rrbracket = \varprojlim_{U} \F_p[H/U],
$$
\noindent where the inverse limit ranges over all open normal subgroups $U$ of finite index in $H$. Let $V$ be an $\F_p\llbracket G \rrbracket$-module that is finite dimensional as an $\F_p$ vector space. Let $\alpha: \Omega \to \F_p$ be the augmentation map sending each group-like element $g \in H$ to $1 \in \F_p$, and let $I := \ker(\alpha)$ denote the \emph{augmentation ideal} of $\Omega$. For each positive integer $n$, assume that both $\Omega/I^n$ and $I^n/I^{n+1}$ are flat as $R$-modules.

\par The short exact sequence of $\F_p \llbracket G \rrbracket$-modules
\[
0 \rightarrow V \otimes_{\F_p} I^{n} / I^{n+1} \rightarrow V \otimes_{\F_p} \Omega / I^{n+1} \rightarrow V\otimes_{\F_p} \Omega / I^{n} \rightarrow 0 \]
\noindent induces the connecting homomorphism
\[H^{d-1}\left(G, V\otimes_{\F_p} \Omega / I^{n}\right) \longrightarrow H^{d}\left(G, V\otimes_{\F_p} I^{n} / I^{n+1}\right)\] on passing to cohomology. Since $G$ acts trivially on $I^{n} / I^{n+1}$, there is a natural isomorphism
\[
H^{d}(G, T) \otimes_{\F_p} I^{n} / I^{n+1} \xrightarrow{\sim} H^{d}\left(G, T \otimes_{\F_p} I^{n} / I^{n+1}\right) .\]
\begin{definition}Define
\[\Psi^{(n)}: H^{d-1}\left(G, T \otimes_{\F_p} \Omega / I^{n}\right) \rightarrow H^{d}(G, T) \otimes_{\F_p} I^{n} / I^{n+1}\]
\noindent to be the resulting composite map, and refer to it as the \emph{generalized Bockstein map}.
\end{definition}
\noindent For $i>0$, define the $i$-th \emph{Iwasawa cohomology group} as follows:
\[H^i_{\op{Iw}}(N, T):=\varprojlim_U H^i(U, T)\]where $U$ ranges over open normal subgroups of $G$ which contain $N$. 
\begin{theorem}[Lam,\,Liu,\,Sharifi,\,Wake\,and\,Wang]
    For each $n \geq 1$, there is a canonical isomorphism
\begin{equation}\label{LLSWW eqn}
\frac{I^{n} H_{\op{Iw}}^{d}(N, T)}{I^{n+1} H_{\op{Iw}}^{d}(N, T)} \cong \frac{H^{d}(G, T) \otimes_{\F_p} I^{n} / I^{n+1}}{\operatorname{im} \Psi^{(n)}}
\end{equation}
of $R$-modules, where $d$ is the $p$-cohomological dimension of $G$.
\end{theorem}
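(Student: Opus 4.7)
The plan is to translate the statement into cohomology of $G$ with coefficients in $T\otimes_{\F_p}\Omega$ and exploit the filtration by powers of the augmentation ideal, using the cohomological dimension hypothesis to terminate the relevant long exact sequences in degree $d$. The first step is Shapiro's lemma for Iwasawa cohomology, giving
\[
H^d_{\op{Iw}}(N,T)\;\cong\;H^d(G,\,T\otimes_{\F_p}\Omega),
\]
with $G$ acting diagonally through its quotient to $H$ on $\Omega$. This follows by combining the finite-level isomorphisms $H^d(U_G,T)\cong H^d(G,T\otimes_{\F_p}\F_p[H/U])$ with passage to the inverse limit over open normal subgroups $U\subset H$, using that top-degree cohomology commutes with Mittag-Leffler inverse limits of finite modules.

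Next I would feed the short exact sequence
\[
0\to T\otimes_{\F_p} I^m\to T\otimes_{\F_p}\Omega\to T\otimes_{\F_p}\Omega/I^m\to 0
\]
into cohomology and use $H^{d+1}(G,\cdot)=0$ to obtain a surjection whose kernel, after identifying it with $I^m\cdot H^d(G,T\otimes\Omega)$, yields a canonical isomorphism
\[
H^d_{\op{Iw}}(N,T)/I^m H^d_{\op{Iw}}(N,T)\;\xrightarrow{\sim}\;H^d(G,T\otimes_{\F_p}\Omega/I^m),
\]
natural in $m$. Then the long exact sequence attached to
\[
0\to T\otimes_{\F_p}I^n/I^{n+1}\to T\otimes_{\F_p}\Omega/I^{n+1}\to T\otimes_{\F_p}\Omega/I^n\to 0,
\]
truncated using cohomological dimension, reads
\[
H^{d-1}(G,T\otimes\Omega/I^n)\xrightarrow{\Psi^{(n)}}H^d(G,T)\otimes_{\F_p} I^n/I^{n+1}\to H^d(G,T\otimes\Omega/I^{n+1})\to H^d(G,T\otimes\Omega/I^n)\to 0,
\]
where the first map is exactly the generalized Bockstein $\Psi^{(n)}$ by definition, and the identification $H^d(G,T\otimes I^n/I^{n+1})\cong H^d(G,T)\otimes I^n/I^{n+1}$ uses the triviality of the $G$-action on $I^n/I^{n+1}$ together with its flatness. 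The kernel of the right-hand surjection is therefore $(H^d(G,T)\otimes I^n/I^{n+1})/\mathrm{im}\,\Psi^{(n)}$, while under the previous identifications this same kernel equals $I^n H^d_{\op{Iw}}(N,T)/I^{n+1}H^d_{\op{Iw}}(N,T)$, yielding the desired isomorphism.

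The main obstacle is the identification $\ker[H^d(G,T\otimes\Omega)\twoheadrightarrow H^d(G,T\otimes\Omega/I^m)]=I^m H^d(G,T\otimes\Omega)$ underlying the second step. The inclusion $I^mH^d\subseteq\ker$ follows formally from the factorization of multiplication by $a\in I^m$ on $T\otimes\Omega$ through $T\otimes\Omega a\subseteq T\otimes I^m$, so that the corresponding map on $H^d$ factors through $H^d(G,T\otimes I^m)$. The reverse inclusion is more subtle and is where the flatness hypotheses on $\Omega/I^n$ and $I^n/I^{n+1}$ enter: one filters $T\otimes I^m$ by successive $T\otimes I^{m+k}$, inductively writes elements of $H^d(G,T\otimes I^{m+k}/I^{m+k+1})\cong H^d(G,T)\otimes I^{m+k}/I^{m+k+1}$ as images of the $\Omega$-action on $H^d(G,T\otimes\Omega)$, and passes to the limit using the completeness of $\Omega$. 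Once this identification is in place, the rest of the argument is formal diagram chasing.
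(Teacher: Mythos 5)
The paper offers no argument of its own here---it simply cites \cite[Theorem 2.2.4]{Lametal}---so there is no internal proof to compare against; your proposal is a correct reconstruction of essentially the argument given in that reference: Shapiro's lemma identifies $H^d_{\op{Iw}}(N,T)$ with $H^d(G,T\otimes_{\F_p}\Omega)$, the vanishing of $H^{d+1}$ yields the control isomorphism $H^d_{\op{Iw}}(N,T)/I^m H^d_{\op{Iw}}(N,T)\cong H^d(G,T\otimes_{\F_p}\Omega/I^m)$, and the long exact sequence of $0\to T\otimes I^n/I^{n+1}\to T\otimes\Omega/I^{n+1}\to T\otimes\Omega/I^n\to 0$ identifies the graded piece with $\operatorname{coker}\Psi^{(n)}$. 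The one place I would tighten your write-up is the inclusion $\ker\bigl[H^d(G,T\otimes\Omega)\to H^d(G,T\otimes\Omega/I^m)\bigr]\subseteq I^m H^d(G,T\otimes\Omega)$: rather than the filtration-and-completeness argument you sketch, it is cleaner to observe that $H^d(G,-)$ is right exact (again because $\operatorname{cd}_p(G)=d$), so choosing generators $a_1,\dots,a_k$ of the ideal $I^m$ gives a $G$-equivariant surjection $(T\otimes\Omega)^{\oplus k}\twoheadrightarrow T\otimes I^m$ and hence a surjection $H^d(G,T\otimes\Omega)^{\oplus k}\twoheadrightarrow H^d(G,T\otimes I^m)$ whose composite with the map to $H^d(G,T\otimes\Omega)$ is $(c_i)\mapsto\sum_i a_i c_i$; the image of $H^d(G,T\otimes I^m)$ is therefore exactly $I^m H^d(G,T\otimes\Omega)$, with no limiting process needed.
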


\begin{proof}
    See \cite[Theorem 2.2.4]{Lametal}.
\end{proof}

\begin{remark}
    In our applications, $K$ will be a number field and $S
    \subset \Omega_K$ is a finite set of primes containing all primes of $K$ which lie above $p$. Let $K_\infty/K$ be a $\Z_p$-extension such that:
    \begin{itemize}
        \item all primes $v\in S_p(K)$ are totally ramified in $K_\infty$,
    \item all primes $v\in S$ are finitely decomposed in $K_\infty$. \end{itemize}Setting 
    \[G:=\op{Gal}(K_S/K)\text{ and }N:=\op{Gal}(K_S/K_\infty),\] we find that $H=\Gamma=\op{Gal}(K_\infty/K)$ and $\Omega=\Lambda/(p)=\F_p\llbracket T\rrbracket $. Note that $I$ is the maximal ideal $(T)$. Let $\rho:\op{G}_{K,S}\rightarrow \op{GL}_d(\cO)$ be a continuous Galois representation and $T:=\mathbf{V}_{\bar{\rho}^*}$. Then \eqref{LLSWW eqn} specializes to 
    \[\frac{I^{n} H_{\op{Iw}}^{2}(K_S/K_\infty, \mathbf{V}_{\bar{\rho}^*})}{I^{n+1} H_{\op{Iw}}^{2}(K_S/K_\infty, \mathbf{V}_{\bar{\rho}^*})} \cong \frac{H^{2}(K_S/K, \mathbf{V}_{\bar{\rho}^*}) \otimes_{\F_p} I^{n} / I^{n+1}}{\operatorname{im} \Psi^{(n)}}.\] In particular, $H_{\op{Iw}}^{2}(K_S/K_\infty, \mathbf{V}_{\bar{\rho}^*})$ is finite if and only if there exists $n>0$ such that the image of $\Psi^{(n)}$ spans $H^{2}(K_S/K, \mathbf{V}_{\bar{\rho}^*}) \otimes_{\F_p} I^{n} / I^{n+1}$.
\end{remark}

\par Coming back to the general situation, assume for simplicity that \( H \cong \mathbb{Z}_p^r \). We fix an isomorphism
\begin{equation}\label{Hab iso}
H \cong \bigoplus_{i=1}^{r} A_i,
\end{equation}
where each \( A_i = \mathbb{Z}_p \) for \( i = 1, \ldots, r \). For each \( i \), let \( \chi_i: G \to A_i \) denote the composition of the canonical projection \( G \twoheadrightarrow H \) with the projection onto the \( i \)-th summand \( H \twoheadrightarrow A_i \). The image of \( \Psi^{(1)} \) may be described in terms of cup products involving these homomorphisms, as we now explain.
\par Choose generators $h_1, \ldots, h_r \in H$ such that the image of $h_i$ in $A_i$ under the isomorphism \eqref{Hab iso} is $1 \in A_i$. Then, setting $x_i = [h_i] - 1 \in I$, where $I$ is the augmentation ideal in $\Omega = \F_p\llbracket H \rrbracket$, we obtain an isomorphism $I/I^2 \cong H/pH$ sending the image of $x_i$ to $h_i \otimes 1$.

\begin{proposition}
    With respect to notation above, for any 1-cocycle $f: G \rightarrow T$, we have
$$
\Psi^{(1)}([f]) = \sum_{i=1}^{r} (\chi_i \cup f)\, x_i \in H^2(G, T) \otimes_R I/I^2.
$$
\end{proposition}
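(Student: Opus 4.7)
The plan is to unwind the definition of the generalized Bockstein map via an explicit set-theoretic lift and then match the resulting two-cocycle with the cup products on the right-hand side. First I would pick the tautological lift $\tilde f : G \to T \otimes_{\F_p} \Omega / I^2$ defined by $\tilde f(g) := f(g) \otimes 1$. The key point is that although $\tilde f$ reduces to $f$ modulo $I$, it is not a cocycle, because the $G$-action on $\Omega/I^2$ is non-trivial: an element $g \in G$ acts on $\omega \in \Omega/I^2$ by left multiplication by $[\bar g]$, where $\bar g$ denotes the image of $g$ in $H$.

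Next I would compute the coboundary $d\tilde f(g_1,g_2) = g_1 \tilde f(g_2) - \tilde f(g_1 g_2) + \tilde f(g_1)$ directly. Using the cocycle identity $f(g_1 g_2) = f(g_1) + g_1 f(g_2)$ together with the description of the action on $\Omega/I^2$, everything collapses to
\[
d\tilde f(g_1,g_2) \;=\; \bigl(g_1 f(g_2)\bigr) \otimes \bigl([\bar g_1] - 1\bigr) \;\in\; T \otimes_{\F_p} I/I^2.
\]
By construction of $\Psi^{(1)}$ as a connecting homomorphism, this two-cocycle, viewed in $T \otimes I/I^2$, represents $\Psi^{(1)}([f])$ in $H^2(G,T) \otimes_{\F_p} I/I^2$ under the canonical identification used just before the statement.

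To conclude, I would identify $[\bar g] - 1 \pmod{I^2}$ in terms of the generators $x_i = [h_i] - 1$. A one-line calculation shows that $[h h'] - 1 \equiv ([h]-1) + ([h']-1) \pmod{I^2}$, so the assignment $h \mapsto [h] - 1$ induces a homomorphism $H \to I/I^2$ whose mod-$p$ reduction is the isomorphism $H/pH \xrightarrow{\sim} I/I^2$ sending $h_i \mapsto x_i$. Writing $\bar g$ in terms of the chosen generators via \eqref{Hab iso}, one obtains
\[
[\bar g] - 1 \;\equiv\; \sum_{i=1}^{r} \chi_i(g)\, x_i \pmod{I^2}.
\]
Substituting this into the formula for $d\tilde f$ gives $d\tilde f(g_1,g_2) = \sum_i \chi_i(g_1)\bigl(g_1 f(g_2)\bigr) \otimes x_i$. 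Finally, I would recognize the expression $(g_1,g_2) \mapsto \chi_i(g_1) \cdot g_1 f(g_2)$ as a representative of the cup product $\chi_i \cup f$ (using the standard convention for cup products with the trivial coefficient module $\F_p$ acting on $T$), which delivers the claimed identity. There is no real obstacle in the argument; the only subtle point is keeping the $G$-action on $\Omega/I^2$ straight, which is precisely what makes the coboundary of the tautological lift land in $T \otimes I/I^2$ in the first place.
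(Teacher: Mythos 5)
Your proposal is correct. The paper itself offers no argument here---it simply cites \cite[Proposition 2.3.3]{Lametal}---whereas you give the standard self-contained verification: lift $f$ tautologically to $\tilde f(g)=f(g)\otimes 1$ in $T\otimes_{\F_p}\Omega/I^2$, observe that the twisted $G$-action on $\Omega/I^2$ makes the coboundary equal to $g_1f(g_2)\otimes([\bar g_1]-1)$, and then use that $h\mapsto [h]-1 \bmod I^2$ is a (continuous) homomorphism $H\to I/I^2$ sending $h_i\mapsto x_i$, so that $[\bar g]-1\equiv\sum_i\chi_i(g)\,x_i$. The identification of $(g_1,g_2)\mapsto \chi_i(g_1)\cdot g_1f(g_2)$ with a representative of $\chi_i\cup f$ is the usual cup-product formula on inhomogeneous cochains, so the computation closes. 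The only point worth flagging is that the overall sign of the connecting homomorphism depends on one's convention for the snake map; with the convention $\Psi^{(1)}([f])=[d\tilde f]$ your formula matches the statement exactly, and any other convention changes both sides only by a global sign.
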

\begin{proof}
    This result is a special case of \cite[Proposition 2.3.3]{Lametal}.
\end{proof}
\subsection{Massey products}\label{section 6.2 Massey}
\par For $n \geq 2$, the image of $\Psi^{(n)}$ is described in terms of $(n+1)$-fold \emph{Massey products}, which may be regarded as higher-order generalizations of the cup product in cohomology. These products were first introduced by Massey \cite{Massey} to show that two topological spaces can possess isomorphic cohomology rings without being homotopy equivalent, thereby revealing that the ring structure alone may not capture all homotopical information. In the setting of Galois cohomology, Massey products similarly detect relations among cohomology classes that are invisible to the ordinary cup product, and they encode higher-order extension data.

\par Let \( r \geq 2 \), $R$ be a profinite commutative ring, $G$ a profinite group and let \( T_1, \dots, T_r \) be \( R\llbracket G\rrbracket \)-modules. Given cohomology classes \( \chi_i \in H^1(G, T_i) \) for \( i = 1, \dots, r \), the associated Massey product is an element
\[
(\chi_1, \chi_2, \dots, \chi_r )_\rho \in H^2(G, T_1 \otimes_R T_2 \otimes_R \cdots \otimes_R T_r),
\]
\noindent with respect to a choice of a \emph{defining system} \( \rho \). The construction of this product depends on choices made in specifying \( \rho \), but the resulting ambiguity is controlled in a well-understood manner.

\par We now recall the structure underlying the algebraic framework used to define these products, following the notation and conventions of \cite{Lametal}. Let \( n \geq 1 \) be an integer. In our applications we specialize to the case \( R = \mathbb{F}_p \). An \( n \)-dimensional \emph{upper-triangular generalized matrix algebra} over \( R \), or \( R \)-\emph{UGMA}, is an \( R \)-algebra \( \mathcal{A} \) determined by the following data. For each pair \( 1 \leq i \leq j \leq n \), let \( A_{i,j} \) be a finitely generated \( R \)-module, with \( A_{i,i} = R \) for all \( i \). For each triple \( 1 \leq i \leq j \leq k \leq n \), there are composition maps
\[
\varphi_{i,j,k} : A_{i,j} \otimes_R A_{j,k} \to A_{i,k},
\]
\noindent which are \( R \)-bilinear and agree with scalar multiplication when \( i = j \) or \( j = k \). These maps are required to satisfy an associativity condition: for all \( 1 \leq i < j < k < l \leq n \), the two natural ways of composing
\[
A_{i,j} \otimes_R A_{j,k} \otimes_R A_{k,l} \to A_{i,l}
\]
\noindent must yield the same result. The underlying \( R \)-module of \( \mathcal{A} \) is
\[
\mathcal{A} = \bigoplus_{1 \leq i \leq j \leq n} A_{i,j},
\]
\noindent and multiplication in \( \mathcal{A} \) is defined via the composition maps. For elements \( a = (a_{i,j}) \) and \( b = (b_{i,j}) \) in \( \mathcal{A} \), the \((i,j)\)-entry of the product \( ab \) is given by
\[
(ab)_{i,j} = \sum_{k = i}^j \varphi_{i,k,j}(a_{i,k} \otimes b_{k,j}).
\]

\begin{definition}
Let \( R \) be a profinite ring and \( G \) a profinite group. A \emph{profinite} \( (R, G)\text{-UGMA} \) is a profinite \( R \)-UGMA \( \mathcal{A} \) equipped with a continuous action of \( G \) on each \( A_{i,j} \), such that:
\begin{itemize}
    \item the action on \( A_{i,i} = R \) is trivial for all \( i \), and
    \item each composition map \( \varphi_{i,j,k} \) is a homomorphism of \( R\llbracket G \rrbracket \)-modules, where \( A_{i,j} \otimes_R A_{j,k} \) carries the diagonal \( G \)-action.
\end{itemize}
\end{definition}

\par We set $\mathcal{U}=\mathcal{U}(\mathcal{A})$ to consist of matrices $a=(a_{i,j})\in \mathcal{A}$ with $a_{i,i}=1$ for $i=1, \dots, n$. Let $\mathcal{U}^{\prime}=\mathcal{U}^{\prime}(\mathcal{A})$ denote the quotient of $\mathcal{U}$ by its central subgroup $\mathcal{Z}=\mathcal{Z}(\mathcal{A})$ of unipotent central elements, that is, those $a \in \mathcal{U}$ with $a_{i, j}=0$ for all $(i, j) \neq(1, n)$. 

\par Let $T_{1}, \ldots, T_{n}$ be compact $R \llbracket G \rrbracket$ modules that are $R$-finitely generated for simplicity, and let $\chi_{i}: G \rightarrow T_{i}$ be continuous 1-cocycles for $1 \leq i \leq n$.

\begin{definition}
    A defining system for the Massey product of $\chi_1, \ldots, \chi_n$ consists of a continuous 1-cocycle $\rho: G \to \mathcal{U}'$, where $\mathcal{U}'$ is the group of units in a profinite $(n+1)$-dimensional upper-triangular $(R, G)$-UGMA $\mathcal{A}$, such that $A_{i,i+1} = T_i$ for $1 \leq i \leq n$, and the composition of $\rho$ with the projection to $A_{i,i+1}$ equals $\chi_i$.

Given such a defining system $\rho$, there exists a unique lift $\tilde{\rho}: G \to \mathcal{U}$ to the full unit group of $\mathcal{A}$, such that the $(1, n+1)$-entry of $\tilde{\rho}(g)$ is zero for all $g \in G$. For each $i < j$, define $\rho_{i,j}: G \to A_{i,j}$ to be the map sending $g$ to the $(i,j)$-entry of $\tilde{\rho}(g)$.

\end{definition}

Given a defining system $\rho$, the $n$-fold Massey product $\left(\chi_{1}, \ldots, \chi_{n}\right)_{\rho} \in H^{2}\left(G, A_{1, n+1}\right)$ is the class of the 2-cocycle
$$
(g, h) \mapsto \sum_{i=2}^{n} \varphi_{1, i, n+1}\left(\rho_{1, i}(g) \otimes g \rho_{i, n+1}(h)\right)
$$
that sends $(g, h)$ to the $(1, n+1)$-entry of $\tilde{\rho}(g) \cdot g \tilde{\rho}(h)$. The Massey product $\left(\chi_{1}, \ldots, \chi_{n}\right)_{\rho}$ vanishes if and only if $\rho$ may be lifted to a cocycle $\widetilde{\rho}:G\rightarrow \mathcal{U}$.

\par We shall in our applications work with certain defining systems that can be associated with a finitely generated $R$-module $M$. Let $m$ be a positive integer less than $n$. We define an $n$-dimensional $R$-UGMA $\mathcal{A}_{n}(M, m)$ as follows. Set
$$
A_{i, j}=\left\{\begin{array}{lc}
M & \text { if } i \leq m<j \\
R & \text { otherwise }
\end{array}\right.
$$
and take the maps $\varphi_{i, j, k}$ to be the $R$-module structure maps. This makes sense since, given $i \leq j \leq k$, at least one of $A_{i, j}$ and $A_{j, k}$ must be $R$, as $m$ cannot satisfy both $m<j$ and $j \leq m$.

Let us write $\mathcal{U}_{n}(M, m)$ for $\mathcal{U}\left(\mathcal{A}_{n}(M, m)\right)$ and $\mathcal{U}_{n}^{\prime}(M, m)$ for $\mathcal{U}^{\prime}\left(\mathcal{A}_{n}(M, m)\right)$. To make this easier to visualize, note that we can write $\mathcal{U}_{n}(M, m)$ in 'block matrix' form as
$$
\mathcal{U}_{n}(M, m)=\left(\begin{array}{cc}
\mathrm{U}_{m}(R) & \mathrm{M}_{m, n-m}(M) \\
0 & \mathrm{U}_{n-m}(R)
\end{array}\right)
$$
where $\mathrm{U}_{k}(R) \leqslant \mathrm{GL}_{k}(R)$ denotes the group of upper-triangular unipotent matrices and $\mathrm{M}_{k, l}(M)$ denotes the additive group of $k$-by- $l$ matrices with entries in $M$ for positive integers $k$ and $l$. 

\par Let $T$ be an $R\llbracket G\rrbracket$-module that is finitely generated as an $R$-module. Let $n \geq 2$ be an integer, and let $a, b \geq 0$ be such that $a + b = n$. For any profinite $R\llbracket G\rrbracket$-module $M$, denote by $Z^i(G, M)$ the $R$-module of continuous $i$-cocycles $f: G \to M$. Let $\alpha = (\alpha_1, \dots, \alpha_a) \in Z^1(G, R)^a$ and $\beta = (\beta_1, \dots, \beta_b) \in Z^1(G, R)^b$.

\begin{definition}
An \emph{$(a,b)$-partial defining system} for $(n+1)$-fold Massey products of the form $(\alpha, \cdot, \beta)$ consists of a pair of continuous homomorphisms
\[
\phi: G \to U_{a+1}(R) \quad \text{and} \quad \theta: G \to U_{b+1}(R),
\]
such that the entries along the first superdiagonal of $\phi$ and $\theta$ are given by $\phi_{i, i+1} = \alpha_i$ and $\theta_{i, i+1} = \beta_i$, respectively.

Given a continuous $1$-cocycle $\lambda: G \to T$, a \emph{defining system} for the $(n+1)$-fold Massey product $(\alpha, \lambda, \beta)$ relative to the partial defining system $(\phi, \theta)$ is a continuous $1$-cocycle
\[
\tau: G \to \mathcal{U}_{n+2}'(T, a+1)
\]
of the form
\[
\tau = \begin{pmatrix}
\phi & \kappa \\
0 & \theta
\end{pmatrix},
\]
where $\kappa: G \to M_{a+1, b+1}'(T)$ is a continuous map satisfying $\kappa_{a+1,1} = \lambda$.
\end{definition}
\par Proper defining systems can be interpreted as cocycles, which we now describe. Let $\mathfrak{U}_{n+2}(R)$ consisting of strictly upper-triangular $(n+2) \times (n+2)$ matrices with entries in $R$. The group $\mathrm{U}_{n+2}(R)$ of upper triangular $(n+2)\times (n+2)$ unipotent matrices acts continuously on $\mathfrak{U}_{n+2}(R)$ via conjugation. Let $\mathfrak{U}_{a,b}(R)$ denote the $R\llbracket \mathrm{U}_{n+2}(R) \rrbracket$-submodule of $\mathfrak{U}_{n+2}(R)$ given by
    \[
    \mathfrak{U}_{a,b}(R) = \begin{pmatrix}
    0 & M_{a+1, b+1}(R) \\
    0 & 0
    \end{pmatrix}.
    \]
\noindent Equip $\mathfrak{U}_{a,b}(R)$ with a $G$-module structure via the continuous homomorphism
\[
G \to \mathrm{U}_{n+2}(R), \quad g \mapsto \begin{pmatrix}
\phi(g) & 0 \\
0 & \theta(g)
\end{pmatrix}.
\]
\noindent As an $R$-module,
    \[
    \mathfrak{U}_{\phi, \theta}(T) = \mathfrak{U}_{a,b}(R) \otimes_R T.
    \]
    \noindent with $G$ acting via
    \[
    g \star x = \phi(g) \cdot (g x) \cdot \theta(g)^{-1},
    \]
    where $x \in M_{a+1, b+1}(T)$, $g x$ denotes the $G$-action on entries of $x$, and $\cdot$ is matrix multiplication. Note that $\mathfrak{U}_{\phi, \theta}(T)$ contains a copy of $T$ as an $R \llbracket G \rrbracket$-submodule, embedded as the $(1, b+1)$. We set $\mathfrak{U}_{\phi, \theta}^{\prime}(T) := \mathfrak{U}_{\phi, \theta}(T) \big/ T$.

\begin{lemma}\label{lemma def system cocycles}
  Let $(\phi, \theta)$ be a partial defining system for the Massey products $(\alpha, \cdot, \beta)$. Then the map that sends a continuous $1$-cocycle $\kappa': G \rightarrow \mathfrak{U}_{\phi, \theta}^{\prime}(T)$ to the map $\tau: G \rightarrow \mathcal{U}_{n+2}^{\prime}(T, a)$ given by $
\tau = \begin{pmatrix}
\phi & \kappa' \theta \\
0 & \theta
\end{pmatrix}
$ is a bijection between $Z^{1}(G, \mathfrak{U}_{\phi, \theta}^{\prime}(T))$ and the set of proper defining systems in $T$ relative to $(\phi, \theta)$.

\end{lemma}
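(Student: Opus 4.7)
The proof is essentially a routine but bookkeeping-heavy verification that the matrix-level cocycle condition for $\tau$ unpacks precisely into the cocycle condition for $\kappa'$ with respect to the twisted $G$-action $g\star x = \phi(g)\cdot(gx)\cdot\theta(g)^{-1}$. The plan is organized into three steps: well-definedness, injectivity, and surjectivity.

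First, I would show the map is well-defined. Given a continuous $1$-cocycle $\kappa': G\to \mathfrak{U}_{\phi,\theta}'(T)$, form $\tau(g)=\begin{pmatrix}\phi(g) & \kappa'(g)\theta(g)\\ 0 & \theta(g)\end{pmatrix}$, where $\kappa'(g)\theta(g)$ is matrix multiplication of a representative of $\kappa'(g)$ (well-defined in $M_{a+1,b+1}'(T)$ since $T$ is quotiented out appropriately). The block-multiplication identity to check is
\[
\tau(gh)\;=\;\tau(g)\cdot g\tau(h),
\]
where $G$ acts entrywise (trivially on the diagonal $R$-blocks $\phi,\theta$, and through the given $G$-action on entries of $\kappa'$). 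Using that $\phi$ and $\theta$ are already homomorphisms, the diagonal blocks match automatically, and the upper-right block boils down to
\[
\kappa'(gh)\theta(gh)\;=\;\phi(g)\,(g\kappa'(h))\,\theta(h)\;+\;\kappa'(g)\,\theta(g)\theta(h).
\]
Right-multiplying by $\theta(gh)^{-1}=\theta(h)^{-1}\theta(g)^{-1}$ and canceling, this is exactly the condition
\[
\kappa'(gh)\;=\;\kappa'(g)\;+\;g\star\kappa'(h),
\]
i.e., the cocycle identity for $\kappa'$ in $\mathfrak{U}_{\phi,\theta}'(T)$. This verifies that $\tau$ is a $1$-cocycle, hence a proper defining system relative to $(\phi,\theta)$.

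Injectivity is immediate: since the lower-right block $\theta(g)$ of $\tau(g)$ is a unipotent $R$-matrix, hence invertible, one recovers $\kappa'(g)$ from $\tau(g)$ as the product of the upper-right block with $\theta(g)^{-1}$. For surjectivity, given an arbitrary proper defining system $\tau=\begin{pmatrix}\phi & \kappa\\ 0 & \theta\end{pmatrix}$ in the sense of the preceding definition, set $\kappa'(g):=\kappa(g)\,\theta(g)^{-1}$, viewed as an element of $M_{a+1,b+1}'(T)=\mathfrak{U}_{\phi,\theta}'(T)$. Running the computation above in reverse, the cocycle identity satisfied by $\tau$ in $\mathcal{U}_{n+2}'(T,a+1)$ translates to the twisted cocycle identity for $\kappa'$, and $\tau$ is recovered from $\kappa'$ by the formula in the statement. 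Continuity of $\kappa'$ follows from continuity of $\kappa$ and of $g\mapsto \theta(g)^{-1}$.

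The only part requiring any subtlety is keeping track of which quotients one is working in: the central $T$ in $\mathfrak{U}_{\phi,\theta}(T)$ corresponds under the correspondence to the central unipotent subgroup $\mathcal{Z}$ of $\mathcal{U}_{n+2}(T,a+1)$ modded out in forming $\mathcal{U}_{n+2}'(T,a+1)$, so that the $(1,n+2)$-entry is well-defined only modulo $T$ on both sides. Once this identification is made, everything is formal, and the bijection of the lemma is established. I do not expect any genuine obstacle; the computation is a direct unwinding of the block matrix multiplication together with the definition of the twisted $G$-action.
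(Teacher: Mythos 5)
Your verification is correct: the block computation does reduce the nonabelian cocycle identity $\tau(gh)=\tau(g)\cdot g\tau(h)$ to the twisted cocycle identity $\kappa'(gh)=\kappa'(g)+g\star\kappa'(h)$, and the inverse map $\kappa'(g)=\kappa(g)\theta(g)^{-1}$ together with the observation that the quotient by the central copy of $T$ in the $(1,n+2)$-position matches the quotient by $\mathcal{Z}$ makes the correspondence a bijection. The paper itself offers no argument here --- it simply cites Lemma 3.3.3 of Lam--Liu--Sharifi--Wake--Wang --- so your self-contained unwinding is exactly the proof that the citation defers to; the one point worth flagging is that the paper's statement writes $\mathcal{U}'_{n+2}(T,a)$ where its own definition of a proper defining system uses $\mathcal{U}'_{n+2}(T,a+1)$, and your computation correctly follows the latter.
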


\begin{proof}
    This result is \cite[Lemma 3.3.3]{Lametal}.
\end{proof}
\noindent Let $e \in \mathfrak{U}_{a,b}(R)$ denote the matrix whose $(a+1,1)$-entry is $1$ and all other entries are zero. There exists a continuous $R \llbracket G \rrbracket$-module homomorphism
$$
p_{\phi,\theta} : \Omega / I^{n+1} \longrightarrow \mathfrak{U}_{a,b}(R)
$$
\noindent defined on the cosets of images of group elements by
$$
p_{\phi,\theta}([h]) = \phi(h) \, e \, \theta(h)^{-1}.
$$
\noindent Moreover, the image of $I^{n}$ is contained in the submodule of matrices whose entries vanish outside the $(1,b+1)$-position. The following result shows that the image of $\Psi^{(n)}$ is closely related to certain Massey products. However, due to the dependence on the map $p_{\phi, \theta}$, this relationship does not, in general, fully determine the image of $\Psi^{(n)}$.

\begin{theorem}
  Suppose $\phi: G \to \mathrm{U}_{a+1}(R)$ and $\theta: G \to \mathrm{U}_{b+1}(R)$ restrict to the cocycles $\alpha \in Z^1(G, R)^a$ and $\beta \in Z^1(G, R)^b$, respectively, as described above. Let $f \in Z^1\left(G, T \otimes_{\mathbb{F}_p} \Omega / I^n\right)$, and let $\rho$ be the proper defining system relative to $(\phi, \theta)$ associated with $p_{\phi, \theta} \circ f$, as in Lemma \ref{lemma def system cocycles}. Then we have the identity
$$
p_{\phi, \theta}\left(\Psi^{(n)}([f])\right) = \left( \alpha, \left(p_{\phi, \theta} \circ f\right)_{a+1,1}, \beta \right)_\rho
$$
\noindent in $H^2(G, T)$. 
\end{theorem}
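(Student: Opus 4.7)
The plan is to exhibit both sides of the claimed identity as connecting homomorphisms in parallel short exact sequences, and then to invoke functoriality. By construction, $\Psi^{(n)}$ is the connecting homomorphism associated with
\[0 \to T \otimes_{\F_p} I^n/I^{n+1} \to T \otimes_{\F_p} \Omega/I^{n+1} \to T \otimes_{\F_p} \Omega/I^n \to 0,\]
composed with the canonical identification $H^d(G, T \otimes_{\F_p} I^n/I^{n+1}) \cong H^d(G, T) \otimes_{\F_p} I^n/I^{n+1}$, while the Massey product on the right-hand side is, after unwinding Lemma \ref{lemma def system cocycles}, the connecting homomorphism $\partial$ of
\[0 \to T \to \mathfrak{U}_{\phi, \theta}(T) \to \mathfrak{U}'_{\phi, \theta}(T) \to 0.\]
The first key step will be to observe that the $R\llbracket G \rrbracket$-module homomorphism $p_{\phi, \theta}$, together with the hypothesis preceding the theorem that its restriction to $I^n$ lands in the $(1,b+1)$-entry of $\mathfrak{U}_{a,b}(R)$, promotes to a morphism from the first short exact sequence to the second, with a well-defined left vertical map $T \otimes_{\F_p} I^n/I^{n+1} \to T$. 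Naturality of the connecting homomorphism then reduces the theorem to the identity
\[\partial([\kappa']) = \bigl( \alpha,\, (p_{\phi, \theta} \circ f)_{a+1, 1},\, \beta \bigr)_\rho \qquad \text{in } H^2(G, T),\]
where $\kappa'$ denotes the image of $(1 \otimes p_{\phi, \theta}) \circ f$ in $\mathfrak{U}'_{\phi, \theta}(T)$ and $\rho$ is the proper defining system it determines via Lemma \ref{lemma def system cocycles}.

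The second step will be a direct cocycle verification. I will lift $\kappa'$ to a cochain $\tilde\kappa : G \to \mathfrak{U}_{\phi, \theta}(T) = M_{a+1, b+1}(T)$, so that $d\tilde\kappa$ takes values in the kernel $T$ embedded at the $(1, b+1)$-entry and represents $\partial([\kappa'])$. Setting $L(g) := \tilde\kappa(g) \theta(g)$ and choosing $\tilde\kappa$ so that the $(1, b+1)$-entry of $L(g)$ vanishes for every $g$, the block matrix $\tilde\tau(g)$ with $\phi(g)$ in the upper-left, $L(g)$ in the upper-right, $0$ in the lower-left, and $\theta(g)$ in the lower-right will be the unique lift of $\rho$ to $\mathcal{U}_{n+2}(T, a+1)$ with vanishing $(1, n+2)$-entry; hence the Massey product $(\alpha, \lambda, \beta)_\rho$ is represented by the $(1, n+2)$-entry of $\tilde\tau(g) \cdot g \tilde\tau(h)$. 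A block-matrix computation then identifies the upper-right block of the failure $\tilde\tau(g) \cdot g\tilde\tau(h) \cdot \tilde\tau(gh)^{-1}$ with $d\tilde\kappa(g, h)$, using the relation $\theta(gh) = \theta(g)\theta(h)$. The matching of the middle cocycle with $(p_{\phi, \theta} \circ f)_{a+1, 1}$ is immediate from the structure of the UGMA $\mathcal{A}_{n+2}(T, a+1)$: the only $T$-valued slot along the superdiagonal of $\rho$ that bridges the $\phi$-block and the $\theta$-block is $A_{a+1, a+2} = T$, realized as the $(a+1, 1)$-entry of the upper-right $M_{a+1, b+1}(T)$-block.

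The main obstacle will be the block-matrix bookkeeping in this second step: one must carefully separate the conjugation $G$-action on $\mathfrak{U}_{\phi, \theta}(T)$ from the entrywise $G$-action on the $T$-coefficients, and use the identity $\theta(h)^{-1}\theta(g)^{-1} = \theta(gh)^{-1}$ to cancel the cross-terms that would otherwise distinguish the Massey 2-cocycle from the coboundary $d\tilde\kappa$.
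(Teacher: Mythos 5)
Your proposal is correct, and it is essentially a reconstruction of the argument in the source the paper cites: the paper's own ``proof'' is just the reference to \cite[Theorem 4.1.2]{Lametal}, where the identity is established exactly as you describe, by promoting $p_{\phi,\theta}$ to a morphism from the sequence $0 \to T\otimes_{\F_p} I^n/I^{n+1} \to T\otimes_{\F_p}\Omega/I^{n+1} \to T\otimes_{\F_p}\Omega/I^n \to 0$ to the sequence $0 \to T \to \mathfrak{U}_{\phi,\theta}(T)\to\mathfrak{U}'_{\phi,\theta}(T)\to 0$ (using that $p_{\phi,\theta}(I^n)$ lies in the $(1,b+1)$-slot), applying naturality of the connecting map, and then checking by the block computation that the connecting homomorphism of the second sequence applied to $[\kappa']$ is the Massey product relative to the defining system of Lemma \ref{lemma def system cocycles}. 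Your cocycle verification is sound: with $L(g)=\tilde\kappa(g)\theta(g)$ one gets $\phi(g)\,gL(h)+L(g)\theta(h)-L(gh)=\bigl(d\tilde\kappa(g,h)\bigr)\theta(gh)$ for the $\star$-action coboundary, and right multiplication by the unipotent $\theta(gh)$ fixes the $(1,b+1)$-entry, so the two $2$-cocycles agree on the nose.
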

\begin{proof}
    See \cite[Theorem 4.1.2]{Lametal}.
\end{proof}
\par We now assume that $G$ has $p$ cohomological dimension $2$ and that $H\simeq \Z_p$. Let $\kappa: G\twoheadrightarrow \Z_p$ be a surjective map which factors through the quotient map $G\twoheadrightarrow H$. In the special case where $\alpha = (\kappa, \kappa, \dots, \kappa)$, partial defining systems can be constructed explicitly using \emph{unipotent binomial matrices}, which we now recall. Let $u_{n}$ denote the $(n+1)$-dimensional nilpotent upper triangular matrix
$$
u_{n}=\left(\begin{array}{ccccc}
0 & 1 & 0 & \cdots & 0 \\
& 0 & 1 & \ddots & \vdots \\
& & 0 & \ddots & 0 \\
& & & \ddots & 1 \\
& & & & 0
\end{array}\right) .
$$
\noindent Let
$
\left[\begin{array}{c}\cdot \\ n\end{array}\right] : \mathbb{Z}_p \to \mathrm{U}_{n+1}(\mathbb{Z}_p)
$ denote the unique continuous group homomorphism such that $
\left[\begin{array}{c} 1 \\ n \end{array}\right] = 1 + u_n
$. Then for any $m \in \mathbb{Z}$, we have
$$
\left[\begin{array}{c} m \\ n \end{array}\right] = (1 + u_n)^m = \sum_{k=0}^{n} \binom{m}{k} u_n^k,
$$
\noindent which equals
$$
\begin{pmatrix}
1 & m & \binom{m}{2} & \cdots & \binom{m}{n} \\
& 1 & m & \ddots & \vdots \\
& & 1 & \ddots & \binom{m}{2} \\
& & & \ddots & m \\
& & & & 1
\end{pmatrix}.
$$
\noindent Let $(\phi, \theta)$ be the $(n, 0)$-proper defining system with $\phi=\left[\begin{array}{c}\kappa \\ n\end{array}\right].$ We have $\alpha=(\kappa, \ldots, \kappa) \in Z^{1}(G, R)^{n}$, which we denote by $\kappa^{(n)}$. We denote $\mathfrak{U}_{\phi, \theta}(T)$ by $\mathfrak{U}_{\left[\begin{array}{l}\kappa \\ n\end{array}\right]}(T)$. the proper defining system $\tau_{x^{n}}$ relative to $\left[\begin{array}{c}\kappa \\ n\end{array}\right]$ that is attached to $p_{n} \circ f$, where
$$
f=\sum_{k=0}^{n-1} \lambda_{k} x^{k} \in Z^{1}\left(G, T \otimes_{\F_p} \Omega / I^{n}\right),
$$
satisfies $\left(\tau_{x^{n}}\right)_{n+1-k, n+2}=\lambda_{k}$ for $0 \leq k \leq n-1$. In particular, the element $\lambda=\lambda_{0}=\left(\tau_{x^{n}}\right)_{n+1, n+2}$ is the image of $f$ under the map
$$
Z^{1}\left(G, T \otimes_{\F_p} \Omega / I^{n}\right) \rightarrow Z^{1}(G, T)
$$
induced by the augmentation $\Omega / I^{n} \mapsto \Omega / I=R$.

\begin{theorem}
    For $f \in Z^{1}\left(G, T \otimes_{\F_p} \Omega / I^{n}\right)$, we have
$$
\Psi^{(n)}([f])=\left(\kappa^{(n)}, \lambda\right)_{\tau_{x^{n}}} \cdot x^{n}
$$
where $\tau_{x^{n}}$ is the proper defining system relative to $\left[\begin{array}{c}\kappa \\ n\end{array}\right]$ attached to $f$, and $\lambda$ is the image of $f$ in $Z^{1}(G, T)$.
\end{theorem}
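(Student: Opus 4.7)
The plan is to specialize the preceding theorem to the $(n,0)$-partial defining system $(\phi, \theta)$ with $\phi = \left[\begin{array}{c}\kappa \\ n\end{array}\right]$ and $\theta$ trivial, and then to unwind the map $p_{\phi,\theta}$ on the augmentation filtration $I^\bullet$ explicitly. With $a = n$ and $b = 0$, the superdiagonal entries of $\phi$ are all equal to $\kappa$, matching $\alpha = \kappa^{(n)} \in Z^{1}(G, \F_p)^n$, while $\theta : G \to \mathrm{U}_1(\F_p) = \{1\}$ is vacuous, so the hypotheses of the preceding theorem are satisfied. The ambient algebra $\mathfrak{U}_{n,0}(\F_p)$ consists of matrices supported on a single $(n+1)\times 1$ block in the upper right of an $(n+2)\times(n+2)$ matrix, with the distinguished element $e$ at position $(n+1, n+2)$ and the distinguished copy of $T$ inside $\mathfrak{U}_{\phi,\theta}(T)$ sitting at position $(1, n+2)$.

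The core computation is the evaluation of $p_{\phi,\theta}$ on the powers $x^k$. Choose a topological generator $\gamma$ of $\Gamma$ with $\kappa(\gamma) = 1$, so that $\phi(\gamma) = I + u_n$. Since $p_{\phi,\theta}$ is $\Omega$-linear and $p_{\phi,\theta}(1) = e$, the element $x = [\gamma] - 1$ acts on $\mathfrak{U}_{n,0}(\F_p)$ by left multiplication by $\phi(\gamma) - I = u_n$. A direct iteration using $u_n \cdot E_{i, n+2} = E_{i-1, n+2}$ (with the convention $E_{0, n+2} := 0$) yields
\[
p_{\phi, \theta}(x^k) \;=\; u_n^k \cdot E_{n+1, n+2} \;=\; E_{n+1-k, n+2}
\]
for $0 \leq k \leq n$. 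In particular, $p_{\phi,\theta}$ identifies $I^n/I^{n+1} = \F_p \cdot x^n$ with $\F_p \cdot E_{1, n+2}$ via $x^n \mapsto E_{1, n+2}$, which is precisely the line corresponding to the distinguished copy of $T$ inside $\mathfrak{U}_{\phi,\theta}(T)$.

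Writing $f = \sum_{k=0}^{n-1} \lambda_k x^k$ with $\lambda_k \in Z^{1}(G, T)$, the formula above shows that the $(i, 1)$-block-entry of $p_{\phi, \theta} \circ f$ equals $\lambda_{n+1-i}$; in particular, its $(n+1, 1)$-entry equals $\lambda_0$, which is exactly the image $\lambda$ of $f$ under the augmentation $\Omega/I^n \to \F_p$. The preceding theorem then produces the equality
\[
p_{\phi, \theta}\!\left(\Psi^{(n)}([f])\right) \;=\; \left(\kappa^{(n)}, \lambda\right)_{\tau_{x^n}} \quad \text{in } H^{2}(G, T),
\]
and the isomorphism $x^n \mapsto E_{1, n+2}$ on $I^n/I^{n+1}$ lifts this to the desired identity $\Psi^{(n)}([f]) = \left(\kappa^{(n)}, \lambda\right)_{\tau_{x^n}} \cdot x^n$ in $H^{2}(G, T) \otimes_{\F_p} I^n/I^{n+1}$. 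The main (mild) obstacle is the matrix iteration producing $p_{\phi,\theta}(x^k) = E_{n+1-k, n+2}$; once that is in hand, the conclusion is a direct transcription of the preceding theorem.
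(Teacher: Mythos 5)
Your argument is correct and is exactly the intended derivation: the paper itself merely cites \cite[Theorem 4.3.1]{Lametal}, and that reference proves the statement precisely by specializing the preceding theorem to the $(n,0)$-partial defining system $\bigl(\left[\begin{smallmatrix}\kappa\\ n\end{smallmatrix}\right],\,1\bigr)$ and computing $p_{\phi,\theta}(x^k)=u_n^k\,e=E_{n+1-k,\,n+2}$, so that $p_{\phi,\theta}$ restricts to an isomorphism on $I^n/I^{n+1}$ identifying $x^n$ with the distinguished copy of $T$ and sending the $(n+1,1)$-entry of $p_{\phi,\theta}\circ f$ to $\lambda_0=\lambda$. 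Your matrix iteration and the identification of $\tau_{x^n}$ with the proper defining system attached to $p_{\phi,\theta}\circ f$ via Lemma \ref{lemma def system cocycles} are both accurate, so nothing is missing.
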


\begin{proof}
    This result is \cite[Theorem 4.3.1]{Lametal}.
\end{proof}

\begin{corollary}\label{filtration corollary}
    Suppose that $G$ is p-cohomologically finite of $p$-cohomological dimension $2$ and that $H\simeq \Z_p$. Let $P_{n}(H)$ denote the subgroup of $H^{2}(G, T) \otimes_{\F_p} I^{n} / I^{n+1}$ generated by all $\left(\kappa^{(n)}, \lambda\right)_{\tau} \cdot x^{n}$ for proper defining systems $\tau$ relative to $\left[\begin{array}{c}\kappa \\ n\end{array}\right]$ and $\lambda=\tau_{n+1, n+2}$. We have a canonical isomorphism of $R$-modules
$$
\frac{I^{n} H_{\mathrm{Iw}}^{2}(N, T)}{I^{n+1} H_{\mathrm{Iw}}^{2}(N, T)} \cong \frac{H^{2}(G, T) \otimes_{\F_p} I^{n} / I^{n+1}}{P_{n}(H)}.
$$
\end{corollary}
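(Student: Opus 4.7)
The plan is to deduce the corollary by combining the generalized Bockstein isomorphism \eqref{LLSWW eqn} (applied with $d=2$) with the Massey-product identity for $\Psi^{(n)}([f])$ established in the theorem immediately preceding. The Bockstein isomorphism already furnishes a canonical identification
\[
\frac{I^n H^2_{\op{Iw}}(N,T)}{I^{n+1} H^2_{\op{Iw}}(N,T)} \;\cong\; \frac{H^2(G,T) \otimes_{\F_p} I^n/I^{n+1}}{\op{im}\, \Psi^{(n)}},
\]
so the entire task reduces to verifying the equality of subgroups $\op{im}\, \Psi^{(n)} = P_n(H)$ inside $H^2(G,T) \otimes_{\F_p} I^n/I^{n+1}$.

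One inclusion is essentially immediate. For each cocycle $f \in Z^1(G, T \otimes_{\F_p} \Omega/I^n)$, the preceding theorem gives $\Psi^{(n)}([f]) = (\kappa^{(n)}, \lambda)_{\tau_{x^n}} \cdot x^n$, where $\tau_{x^n}$ is a proper defining system relative to $\left[\begin{array}{c}\kappa \\ n\end{array}\right]$ and $\lambda = (\tau_{x^n})_{n+1,n+2}$ is the image of $f$ under the augmentation $\Omega/I^n \twoheadrightarrow R$. Consequently every element of $\op{im}\, \Psi^{(n)}$ is already one of the distinguished generators of $P_n(H)$, which gives $\op{im}\, \Psi^{(n)} \subseteq P_n(H)$.

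The substantive content lies in the reverse inclusion. My plan is to show that the assignment $f \mapsto \tau_{x^n}$ is surjective onto the set of proper defining systems relative to $\left[\begin{array}{c}\kappa \\ n\end{array}\right]$; once this is in hand, every generator $(\kappa^{(n)}, \lambda)_\tau \cdot x^n$ of $P_n(H)$ arises as $\Psi^{(n)}([f])$ for a suitable $f$, yielding $P_n(H) \subseteq \op{im}\, \Psi^{(n)}$. To establish the surjectivity, I would combine Lemma \ref{lemma def system cocycles}, which identifies proper defining systems relative to $(\phi,\theta) = \left(\left[\begin{array}{c}\kappa \\ n\end{array}\right],\, 1\right)$ with cocycles in $Z^1(G, \mathfrak{U}'_{\phi,\theta}(T))$, with a direct check that in this special case $p_{\phi,\theta}$ induces an isomorphism of $R \llbracket G \rrbracket$-modules $T \otimes_{\F_p} \Omega/I^n \xrightarrow{\sim} \mathfrak{U}'_{\phi,\theta}(T)$. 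Then any cocycle valued in $\mathfrak{U}'_{\phi,\theta}(T)$ is the image $p_{\phi,\theta} \circ f$ of a unique $f$, and the previous theorem applied to that $f$ reproduces the desired Massey product.

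The main obstacle I expect is verifying this last isomorphism. Concretely, one must unpack the formula $p_{\phi,\theta}([h]) = \phi(h) \, e$ (using $\theta = 1$) and track how the binomial entries of $\left[\begin{array}{c}\kappa \\ n\end{array}\right]$ translate each graded piece $I^k/I^{k+1}$ into the corresponding row of $\mathfrak{U}'_{\phi,\theta}(T)$, in a $G$-equivariant manner. Once this explicit dictionary is confirmed, the two inclusions combine with the Bockstein isomorphism to yield the claimed canonical identification.
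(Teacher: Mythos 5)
Your argument is correct and is essentially the intended one: the paper's proof is simply a citation to \cite[Corollary 4.3.3]{Lametal}, and your reconstruction (reduce via the Bockstein isomorphism to showing $\operatorname{im}\Psi^{(n)}=P_n(H)$, with one inclusion from the preceding theorem and the other from the bijection of Lemma \ref{lemma def system cocycles} combined with the identification $T\otimes_{\F_p}\Omega/I^n\xrightarrow{\sim}\mathfrak{U}'_{\phi,\theta}(T)$ induced by $p_{\phi,\theta}$) is exactly how that corollary is deduced in \emph{loc.\ cit.} The "main obstacle" you flag is genuine but routine: with $\theta=1$ and $\phi=\left[\begin{smallmatrix}\kappa\\ n\end{smallmatrix}\right]$, both sides are $(n{+}1)$-dimensional before quotienting by the $(1,1)$-copy of $T$, and the explicit coefficient dictionary $(\tau_{x^n})_{n+1-k,n+2}=\lambda_k$ recorded just before the theorem confirms the isomorphism.
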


\begin{proof}
    See \cite[Corollary 4.3.3]{Lametal}.
\end{proof}

\section{Main results}
\par In this section, we establish the main results of this paper. The following Theorem establishes a criterion for the vanishing of the $\mu$-invariant of the fine Selmer group stated in purely in terms of Massey products. This result has striking consequences to establishing the constancy of the $\mu$-invariant in Greenberg neighbourhoods of a $\Z_p$-extensions of a given one, which we will subsequently explore.
\begin{theorem}\label{main thm of paper}
Let $K$ be a number field and $K_\infty/K$ be a $\Z_p$-extension and $S$ be a finite set of primes containing the primes of $K$ which lie above $p$. Assume that all primes $v|p$ of $K$ are totally ramified in $K_\infty$ and all primes in $S$ are finitely decomposed in $K_\infty$. Let $\cK$ be a finite extension of $\Q_p$ with valuation ring $\cO$ and let $\rho:\op{G}_{K,S}\rightarrow \op{GL}_d(\cO)$ be a continuous Galois representation. Then the following are equivalent. 
\begin{enumerate}
    \item\label{main thm of paper c1} The fine Selmer group $\mathcal{R}_{p^\infty}(\mathbf{A}(\rho)/K_\infty)$ is cotorsion with vanishing $\mu$-invariant. 
    \item\label{main thm of paper c2} There exists $n>0$ such that the Massey products $(\kappa^{(n)}, \lambda)_\tau$ span $H^2(K_S/K, \mathbf{V}_{\bar{\rho}^*})$, where $\tau$ ranges over defining systems relative to $\left[\begin{array}{c}\kappa \\ n\end{array}\right]$ and $\lambda=\tau_{n+1, n+2}\in H^1(K_S/K, \mathbf{V}_{\bar{\rho}^*})$. 
\end{enumerate}
\end{theorem}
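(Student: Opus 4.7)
The plan is to combine Theorem \ref{Conj A criterion}, which reformulates condition (1) as the finiteness of $M := H^2_{\op{Iw}}(K_S/K_\infty, \mathbf{V}_{\bar{\rho}^*})$, with the Massey-product filtration result Corollary \ref{filtration corollary}, which identifies each successive graded piece of the $I$-adic filtration on $M$ as the cokernel of a Massey-product map. The remainder of the argument is a topological Nakayama lemma for $M$ viewed as a compact module over the local ring $\Omega = \F_p\llbracket T\rrbracket$.

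First I would invoke Theorem \ref{Conj A criterion} to replace (1) by the equivalent assertion that $M$ is finite. Then I would apply Corollary \ref{filtration corollary} with $G = \op{G}_{K,S}$, $N = \op{Gal}(K_S/K_\infty)$, $H = \Gamma = \op{Gal}(K_\infty/K) \simeq \Z_p$, and $T = \mathbf{V}_{\bar{\rho}^*}$. This requires $\op{cd}_p \op{G}_{K,S} = 2$, which holds since $p$ is odd and $S \supseteq S_p(K)$ (cf.\ \cite{NSW}). Writing $I = (T)$ for the maximal ideal of $\Omega$, the corollary furnishes a canonical isomorphism
$$
\frac{I^n M}{I^{n+1} M} \;\cong\; \frac{H^2(K_S/K, \mathbf{V}_{\bar{\rho}^*}) \otimes_{\F_p} I^n/I^{n+1}}{P_n(\Gamma)},
$$
where $P_n(\Gamma)$ is the $\F_p$-subspace spanned by the Massey products $(\kappa^{(n)},\lambda)_\tau \cdot T^n$ appearing in (2).

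The next input is that $M$ is a finitely generated $\Omega$-module. The $n = 0$ case of the displayed isomorphism presents $M/IM$ as a quotient of the finite group $H^2(K_S/K, \mathbf{V}_{\bar{\rho}^*})$, so $M/IM$ is a finite-dimensional $\F_p$-vector space; combined with the fact that $M$ is compact, the topological Nakayama lemma yields finite generation over $\Omega$. From here the two implications are symmetric. If (2) holds for some $n$, then the displayed isomorphism forces $I^n M = I^{n+1} M$, and Nakayama applied to the finitely generated submodule $I^n M$ of the local $\Omega$-module $M$ yields $I^n M = 0$. Since $\Omega/I^n$ is a finite ring, $M$ is then finite, which proves (1). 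Conversely, if $M$ is finite then the descending chain $\{I^n M\}_{n \geq 0}$ stabilizes, and a final application of Nakayama gives $I^n M = 0$ for some $n$; the displayed isomorphism now forces $P_n(\Gamma) = H^2(K_S/K, \mathbf{V}_{\bar{\rho}^*}) \otimes_{\F_p} I^n/I^{n+1}$, and since $I^n/I^{n+1}$ is one-dimensional over $\F_p$ generated by $T^n$, this is precisely (2).

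The main obstacle is the careful handling of the Nakayama step in the profinite setting: one must justify topological finite generation of $M$ over $\Omega$ from finiteness of $M/IM$ and argue correctly that an $I$-adic Nakayama argument is valid on the compact module $M$ and its submodules $I^n M$. A secondary technical point, dispatched by a citation to \cite{NSW}, is the verification of $\op{cd}_p \op{G}_{K,S} = 2$ needed to place Corollary \ref{filtration corollary} in force; everything else is a direct application of the results already marshalled in Sections 3 and 6.
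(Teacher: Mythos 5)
Your proposal is correct and follows essentially the same route as the paper: Theorem \ref{Conj A criterion} converts condition (1) into finiteness of $H^2_{\op{Iw}}(K_S/K_\infty, \mathbf{V}_{\bar{\rho}^*})$, finiteness is translated into stabilization $I^nM = I^{n+1}M$ of the augmentation filtration, and Corollary \ref{filtration corollary} identifies the graded pieces with the cokernel of the Massey-product span. The only difference is that you spell out the topological Nakayama argument (finite generation of $M$ over $\Omega$ from finiteness of $M/IM$, and the equivalence of stabilization with finiteness) which the paper leaves implicit; this is a correct and welcome elaboration rather than a different method.
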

\begin{proof}
    The $\mu$-invariant of the fine Selmer group $\mathcal{R}_{p^\infty}(\mathbf{A}(\rho)/K_\infty)$ vanishes if and only if the Iwasawa cohomology group $H^2_{\op{Iw}}(K_S/K_{\op{cyc}}, \mathbf{V}_{\bar{\rho}^*})$ is finite. Thus, (1) is equivalent to the condition that there exists some $n>0$ such that \[I^n H^2_{\op{Iw}}(K_S/K_{\op{cyc}}, \mathbf{V}_{\bar{\rho}^*})=I^{n+1} H^2_{\op{Iw}}(K_S/K_{\op{cyc}}, \mathbf{V}_{\bar{\rho}^*}).\] It then follows from Corollary \ref{filtration corollary} that (1) and (2) are equivalent.
\end{proof}

\begin{remark}\label{remark qi}
    At this point, some remarks are in order. 
    \begin{itemize}
        \item Specialize to the case when $\rho=1$ is the trivial $1$-dimensional representation, $\mathcal{R}_{p^\infty}(\mathbf{A}(\rho)/K_\infty)$ is the Pontryagin dual of $\mathcal{Y}_S$, considered in Remark \ref{remark on rho=1}. Theorem \ref{main thm of paper} then extends the $\mu$-invariant vanishing criterion in \cite{qi2025iwasawa}, where it is assumed that $H^2(K_S/K, \mu_p)$ is $1$-dimensional. 
        \item When one specializes to the case when $p$ is a regular prime, $K=\Q(\mu_p)$ and $K_\infty=\Q(\mu_{p^\infty})$ is the cyclotomic $\Z_p$-extension of $K$, the maximal pro-$p$ quotient of $\op{Gal}(K_S/K)$ is free pro-$p$. In general, this quotient is minimal generated by $r:=\dim_{\F_p} H^1(K_S/K, \F_p)$ elements modulo $s:=\dim_{\F_p} H^2(K_S/K, \F_p)$ relations. When $p$ is a regular prime, 
        \[H^2(K_S/K, \mathbf{V}_{\bar{\rho}^*})=H^2(K_S/K, \mu_p)=H^2(K_S/K, \F_p)=0\] and the condition (2) in Theorem \ref{main thm of paper} is vacuously true. 
        \item Suppose that $\rho_1$ and $\rho_2$ are two representations $\op{G}_{K,S}\rightarrow \op{GL}_d(\cO)$ such that $\bar{\rho}_1\simeq \bar{\rho}_2$. Then, condition (1) is true for $\rho_1$ if and only if condition (1) is true for $\rho_2$. Indeed, condition (2) depends only on the residual representation. This is seen more directly from Theorem \ref{Conj A criterion}.
        \end{itemize}
\end{remark} 

\par Let $K$ be a number field of degree $n=[K:\Q]=r_1+2r_2$, with $r_1$ real places and $r_2$ pairs of complex conjugate places. Fix a prime $p$. Let $\widetilde{K}$ denote the compositum of all $\Z_p$-extensions of $K$. Then $\operatorname{Gal}(\widetilde{K}/K)\simeq \Z_p^{t}$, where $ 
t=r_2+1+\delta_p(K)$, and $\delta_p(K)$ is the Leopoldt defect of $K$ at $p$. The standard bound $\delta_p(K)\le r_1+r_2-1$ yields $
r_2+1\le t\le r_1+2r_2=n
$. Write $\widetilde{\mathcal{F}}$ for the maximal abelian unramified pro-$p$ extension of $\widetilde{K}$, and set $\widetilde{\mathcal{X}}:=\operatorname{Gal}(\widetilde{\mathcal{F}}/\widetilde{K})$. The Iwasawa algebra attached to $\Gamma$ is
\[\widetilde{\Lambda}:=\Z_p\llbracket \op{Gal}(\widetilde{K}/K)\rrbracket.\]
\noindent After choosing topological generators $\sigma_1,\dots,\sigma_t$ of $\Gamma$ and writing $T_i:=\sigma_i-1$, we obtain a noncanonical identification
$
\widetilde{\Lambda}\simeq \mathbb{Z}_p\llbracket T_1,\dots,T_t\rrbracket
$. Via the natural conjugation action of $\op{Gal}(\widetilde{K}/K)$ on $\widetilde{\mathcal{F}}$, the group $\widetilde{\mathcal{X}}$ acquires a canonical structure of a $\widetilde{\Lambda}$-module. A theorem of Greenberg \cite[Theorem 1]{GreenbergAJM} shows that $\widetilde{\mathcal{X}}$ is in fact a Noetherian torsion $\widetilde{\Lambda}$-module. 
\par Let $\mathscr{E}$ be the set of all $\Z_p$-extensions of $K$. Let $K_\infty^1$ and $K_\infty^2$ be $\Z_p$-extensions of $K$. Let $K_n^i/K$ be the $n$-th layer of $K_\infty^i$. Set
\[n_0(K_\infty^1, K_\infty^2):=\op{max}\{ n\mid K_n^1=K_n^2\}\] with the convention that $n_0(K_n^1, K_n^2):=\infty$ if $K_\infty^1=K_\infty^2$. The distance between $K_\infty^1$ and $K_\infty^2$ is then defined as follows:
\[\delta(K_\infty^1, K_\infty^2):=\begin{cases}
    p^{-n_0(K_\infty^1, K_\infty^2)} & \text{ if }K_\infty^1\neq  K_\infty^2,\\
    0 & \text{ if }K_\infty^1=  K_\infty^2.
\end{cases}\]
This gives a topology on $\mathscr{E}$, it is easy to see that $\mathscr{E}$ is indeed compact and hausdorff. Let $K_\infty\in \mathscr{E}$ and let $n$ be a positive integer. Let $\mathscr{E}(K_\infty, n)$ consist of all $K_\infty'\in \mathscr{E}$ such that 
\[[K_\infty'\cap K_\infty: K]\geq p^n,\] or equivalently, 
\[\delta(K_\infty', K_\infty)\leq p^{-n}.\] Then $\{\mathscr{E}(K_\infty, n)\mid K_\infty\in \mathscr{E}, n\in \Z_{\geq 1}\}$ gives a basis for the topology on $\mathscr{E}$. We refer to a basic open set of the form $\mathscr{E}(K_\infty, n)$ is referred to as a \emph{Greenberg neighbourhood} of $K_\infty$. 
\par Let $Y$ be a Noetherian torsion $\widetilde{\Lambda}$-module. Let $K_\infty\in \mathscr{E}$ and let $\Lambda_{K_\infty}$ be the associated Iwasawa algebra. Denote by $\widetilde{\Lambda}\twoheadrightarrow \Lambda_{K_\infty}$ the natural surjection and set $Y_{K_\infty}:=Y\otimes_{\widetilde{\Lambda}} \Lambda_{K_\infty}$. Then $Y_{K_\infty}$ is Noetherian as a $\Lambda_{K_\infty}$-module but need not be torsion. Let $\mathscr{E}(Y)$ be the set of $\Z_p$-extensions $K_\infty$ for which $Y_{K_\infty}$ is a torsion $\Lambda_{K_\infty}$-module. Then, $\mathscr{E}(Y)$ is an open subset of $\mathscr{E}$ (cf. \cite[Proposition 1]{GreenbergAJM}). For $K_\infty\in \mathscr{E}(Y)$, denote by $\mu(Y_{K_\infty})$ the $\mu$-invariant of $Y_{K_\infty}$. This defines a function
\[K_\infty\rightarrow \mu(Y_{K_\infty})\] on $\mathscr{E}(Y)$, which Greenberg proved is locally bounded (see \cite[Theorem 2]{GreenbergAJM}). Moreover if the $\mu$-invariant vanishes at $K_\infty$, then it vanishes on a Greenberg neighbourhood of $K_\infty$ (cf. \cite[Theorem 3]{GreenbergAJM}). The following result shows that if two $\Z_p$-extensions $K_\infty^1$ and $K_\infty^2$ agree at the first layer itself, this can be sufficient condition to deduce that
\[\mu\left(\mathcal{R}_{p^\infty}(\mathbf{A}(\rho)/K_\infty^1)\right)=0\Leftrightarrow \mu\left(\mathcal{R}_{p^\infty}(\mathbf{A}(\rho)/K_\infty^2)\right)=0.\]

\begin{theorem}\label{greenberg neighbourhood theorem}
    Let $K$ be a number field and let $S$ be a finite set of primes of $K$ containing the primes of $K$ which lie above $p$. Let $\rho: \op{G}_{K, S}\rightarrow \op{GL}_d(\cO)$ be a continuous Galois representation. Let $K_\infty^1$ and $K_\infty^2$ be two $\Z_p$-extensions of $K$ such that:
    \begin{enumerate}
        \item the primes $v|p$ of $K$ are totally ramified in both $K_\infty^1$ and $K_\infty^2$, 
        \item all primes $v\in S$ are finitely decomposed in $K_\infty^1$ and $K_\infty^2$, 
        \item the first layers of $K_\infty^1$ and $K_\infty^2$ coincide, i.e., $K_1^1=K_1^2$.
    \end{enumerate}
    Suppose that $\mathcal{R}_{p^\infty}(\mathbf{A}(\rho)/K_\infty^1)$ is cotorsion with $\mu\left(\mathcal{R}_{p^\infty}(\mathbf{A}(\rho)/K_\infty^1)\right)=0$. Then, $\mathcal{R}_{p^\infty}(\mathbf{A}(\rho)/K_\infty^2)$ is cotorsion with $\mu\left(\mathcal{R}_{p^\infty}(\mathbf{A}(\rho)/K_\infty^2)\right)=0$.
\end{theorem}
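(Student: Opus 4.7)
The plan is to combine Theorem \ref{main thm of paper} with Theorem \ref{Conj A criterion} to reformulate condition (1) as the finiteness of $H^2_{\op{Iw}}(K_S/K_\infty^i,\mathbf{V}_{\bar{\rho}^*})$, then compare these finiteness properties for $i=1,2$ via the compositum $L_\infty:=K_\infty^1\cdot K_\infty^2$, and finally exploit the first-layer hypothesis to pin down the relevant dependence on the $\Z_p$-extension modulo the square of the maximal ideal.

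First, observe that the kernels of the mod-$p$ characters $\bar{\kappa}_i:\op{G}_{K,S}\to\F_p$ both equal $\op{G}_{K_1,S}$ by the first-layer hypothesis $K_1^1=K_1^2$. Hence $\bar{\kappa}_1$ and $\bar{\kappa}_2$ are proportional by a unit in $\F_p^\times$ and, after rescaling topological generators, one may arrange $\kappa_1\equiv\kappa_2\pmod p$. The compositum $L_\infty$ is itself a $\Z_p^2$-extension of $K$, since $\op{Gal}(L_\infty/K)$ is the fibre product of $\op{Gal}(K_\infty^i/K)\simeq\Z_p$ over the finite cyclic $p$-group $\op{Gal}(K_\infty^1\cap K_\infty^2/K)$; all primes above $p$ remain totally ramified, and all primes in $S$ remain finitely decomposed, in $L_\infty/K$.

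Set $M:=\mathbf{V}_{\bar{\rho}^*}$, $\widetilde\Omega:=\F_p\llbracket\op{Gal}(L_\infty/K)\rrbracket$, and $\Omega^i:=\F_p\llbracket\op{Gal}(K_\infty^i/K)\rrbracket$. The short exact sequence of $\F_p\llbracket\op{G}_{K,S}\rrbracket$-modules
\[
0\longrightarrow\widetilde\Omega\xrightarrow{\ \tau_i\ }\widetilde\Omega\longrightarrow\Omega^i\longrightarrow 0,
\]
where $\tau_i$ generates $\ker(\widetilde\Omega\twoheadrightarrow\Omega^i)$, combined with $p$-cohomological dimension two for $\op{G}_{K,S}$ and cohomological dimension one for $\op{Gal}(L_\infty/K_\infty^i)\simeq\Z_p$, yields
\[
H^2_{\op{Iw}}(K_S/K_\infty^i,M)\ \simeq\ Y/\tau_iY,\qquad Y:=H^2_{\op{Iw}}(K_S/L_\infty,M),
\]
a finitely generated module over $\widetilde\Omega\simeq\F_p\llbracket T_1,T_2\rrbracket$. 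Choosing generators $\sigma_1,\sigma_2$ of $\op{Gal}(L_\infty/K)$ with $T_j:=\sigma_j-1$ and writing $\kappa_i$ as a primitive vector $(a_i,b_i)\in\Z_p^2$, a direct computation gives $\tau_i=(1+T_1)^{b_i}(1+T_2)^{-a_i}-1\equiv b_iT_1-a_iT_2\pmod{\mathfrak{m}^2}$. The assumption $\bar{\kappa}_1=\bar{\kappa}_2$ says precisely $(a_1,b_1)\equiv(a_2,b_2)\pmod p$, so $\tau_1\equiv\tau_2\pmod{\mathfrak{m}^2}$. Thus the task reduces to showing that finiteness of $Y/\tau_1Y$ implies finiteness of $Y/\tau_2Y$.

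The final implication is the main obstacle: for a general finitely generated $\widetilde\Omega$-module, $\tau_1\equiv\tau_2\pmod{\mathfrak{m}^2}$ does not force $Y/\tau_1Y$ and $Y/\tau_2Y$ to share the same finiteness behaviour (one can write down explicit modules where a $T_2$-torsion quotient is infinite while the corresponding quotient along the perturbation $T_2+T_1^2$ becomes finite). Overcoming this obstruction must invoke a structural property of the cohomological module $Y=H^2_{\op{Iw}}(K_S/L_\infty,M)$---most naturally a two-variable analogue of the Ochi--Venjakob no-finite-submodule theorem used in Proposition \ref{H^2 equivalence propn}, ruling out pseudo-null $\widetilde\Omega$-submodules of $Y$ supported along the height-one primes $(\tau_1),(\tau_2)$. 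With such a structural input, the finiteness of $Y/\tau Y$ depends only on the class of $\tau$ in $\mathfrak{m}/\mathfrak{m}^2$, and the proportionality established above yields finiteness of $H^2_{\op{Iw}}(K_S/K_\infty^2,M)$. Translating this back through Theorem \ref{Conj A criterion} gives condition (1) for $\rho$ over $K_\infty^2$.
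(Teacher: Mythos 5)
Your reduction of condition (1) to the finiteness of $H^2_{\op{Iw}}(K_S/K_\infty^i,\mathbf{V}_{\bar{\rho}^*})$ via Theorem \ref{Conj A criterion} is correct, and the descent isomorphism $H^2_{\op{Iw}}(K_S/K_\infty^i,M)\simeq Y/\tau_iY$ with $Y=H^2_{\op{Iw}}(K_S/L_\infty,M)$ is fine (the vanishing of $H^3_{\op{Iw}}$ kills the other term of the descent spectral sequence). But the final step is a genuine gap, and the structural input you propose does not close it. Ruling out pseudo-null (or finite) submodules of $Y$ does \emph{not} make the finiteness of $Y/\tau Y$ depend only on the class of $\tau$ in $\mathfrak{m}/\mathfrak{m}^2$. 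Take $Y=\F_p\llbracket T_1,T_2\rrbracket/(T_2+T_1^2)$: this is a cyclic torsion module with no nonzero pseudo-null submodule (as a module over itself it is the one-dimensional domain $\F_p\llbracket T_1\rrbracket$), yet $Y/T_2Y\simeq \F_p[T_1]/(T_1^2)$ is finite while $Y/(T_2+T_1^2)Y=Y$ is infinite, even though $T_2\equiv T_2+T_1^2\pmod{\mathfrak{m}^2}$. For a torsion $Y$ with no pseudo-null submodules, finiteness of $Y/\tau Y$ is equivalent to $(\tau)$ not lying in the support of $Y$, i.e.\ to $\tau$ not dividing a characteristic power series of $Y$; this is a condition on the full height-one prime $(\tau)$, not on $\tau\bmod\mathfrak{m}^2$. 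So to finish along your route you would need to show that no irreducible factor of $\op{char}(Y)$ is congruent to $\tau_1$ modulo $\mathfrak{m}^2$ other than possibly $\tau_1$ itself, which is not a consequence of any Ochi--Venjakob-type theorem and is not established by your argument.

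The paper avoids the compositum entirely. It applies Theorem \ref{main thm of paper} (equivalently, Corollary \ref{filtration corollary}) to each $\Z_p$-extension separately: finiteness of $H^2_{\op{Iw}}(K_S/K_\infty^i,\mathbf{V}_{\bar{\rho}^*})$ holds if and only if for some $n$ the Massey products $(\kappa_i^{(n)},\lambda)_\tau$ span $H^2(K_S/K,\mathbf{V}_{\bar{\rho}^*})$. These Massey products are computed with $\F_p$-coefficients from the defining systems built out of $\kappa_i$, and the paper argues they depend only on $\bar{\kappa}_i=\kappa_i\bmod p$, i.e.\ only on the first layer; since $K_1^1=K_1^2$ the two spanning criteria are literally the same statement, and the equivalence follows. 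In other words, the comparison between the two extensions is made at the level of the graded pieces $I^nH^2_{\op{Iw}}/I^{n+1}H^2_{\op{Iw}}$ of each one-variable Iwasawa cohomology group, where the dependence on the extension is visibly through $\bar{\kappa}$ alone, rather than at the level of a two-variable characteristic ideal, where (as your own example shows) first-order agreement of the parameters is not enough.
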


\begin{proof}
    Theorem \ref{main thm of paper} applies to both $(\rho, K_\infty^1)$ and $(\rho, K_\infty^2)$. Condition \eqref{main thm of paper c2} is satisfied for $(\rho, K_\infty^1)$ if and only if it is satisfied for $(\rho, K_\infty^2)$. This is because the Massey products are taken with respect to $\mathbb{F}_p$-coefficients and thus only depend on the reduction of $\kappa$ modulo $p$. Since the first layers of the two extensions coincide, the Massey products in the formulation of \eqref{main thm of paper c2} also coincide. Thus by Theorem \ref{main thm of paper}, we find that condition \eqref{main thm of paper c1} of Theorem \ref{main thm of paper} is true for $(\rho, K_\infty^1)$ if and only if it is true for $(\rho, K_\infty^2)$.
\end{proof}
\noindent The result is significant because it provides an explicit Greenberg neighbourhood of a $\Z_p$-extension in which the $\mu=0$ condition continues to hold. This goes beyond Greenberg’s general theorem, which guarantees only the existence of some neighbourhood where the $\mu=0$ condition is satisfied, without specifying it explicitly.

\bibliographystyle{alpha}
\bibliography{references}

\newcommand{\etalchar}[1]{$^{#1}$}
\begin{thebibliography}{WWE20}

\bibitem[All16]{PAllen}
Patrick~B. Allen.
\newblock Deformations of polarized automorphic {G}alois representations and adjoint {S}elmer groups.
\newblock {\em Duke Math. J.}, 165(13):2407--2460, 2016.

\bibitem[BC23]{BCM1}
Ashay Burungale and Laurent Clozel.
\newblock Ordinary deformations are unobstructed in the cyclotomic limit.
\newblock {\em Asian J. Math.}, 27(3):405--422, 2023.

\bibitem[BCM24]{BCM2}
Ashay~A Burungale, Laurent Clozel, and Barry Mazur.
\newblock Dimension of the deformation space of ordinary representations in the cyclotomic limit.
\newblock {\em arXiv preprint arXiv:2406.02473}, 2024.

\bibitem[CS05]{CoatesSujatha}
J.~Coates and R.~Sujatha.
\newblock Fine {S}elmer groups of elliptic curves over {$p$}-adic {L}ie extensions.
\newblock {\em Math. Ann.}, 331(4):809--839, 2005.

\bibitem[DRS23]{DRS}
Shaunak~V. Deo, Anwesh Ray, and R.~Sujatha.
\newblock On the {$\mu$} equals zero conjecture for fine {S}elmer groups in {I}wasawa theory.
\newblock {\em Pure Appl. Math. Q.}, 19(2):641--680, 2023.

\bibitem[FW79]{ferrerowashington}
Bruce Ferrero and Lawrence~C. Washington.
\newblock The {I}wasawa invariant {$\mu _{p}$} vanishes for abelian number fields.
\newblock {\em Ann. of Math. (2)}, 109(2):377--395, 1979.

\bibitem[Gre73]{GreenbergAJM}
Ralph Greenberg.
\newblock The {I}wasawa invariants of {${\bf \Gamma }$}-extensions of a fixed number field.
\newblock {\em Amer. J. Math.}, 95:204--214, 1973.

\bibitem[GV00]{GV}
Ralph Greenberg and Vinayak Vatsal.
\newblock On the {I}wasawa invariants of elliptic curves.
\newblock {\em Invent. Math.}, 142(1):17--63, 2000.

\bibitem[Hid06]{Hida}
Haruzo Hida.
\newblock {\em Hilbert modular forms and {I}wasawa theory}.
\newblock Oxford Mathematical Monographs. The Clarendon Press, Oxford University Press, Oxford, 2006.

\bibitem[Iwa73]{IwasawaAnnals}
Kenkichi Iwasawa.
\newblock On {${\bf Z}_{l}$}-extensions of algebraic number fields.
\newblock {\em Ann. of Math. (2)}, 98:246--326, 1973.

\bibitem[LLS{\etalchar{+}}23]{Lametal}
Yeuk Hay~Joshua Lam, Yuan Liu, Romyar Sharifi, Preston Wake, and Jiuya Wang.
\newblock Generalized {B}ockstein maps and {M}assey products.
\newblock {\em Forum Math. Sigma}, 11:Paper No. e5, 41, 2023.

\bibitem[Mas58]{Massey}
W.~S. Massey.
\newblock Some higher order cohomology operations.
\newblock In {\em Symposium internacional de topolog\'{\i}a algebraica {I}nternational symposium on algebraic topology}, pages 145--154. Universidad Nacional Aut\'{o}noma de M\'{e}xico and UNESCO, M\'{e}xico, 1958.

\bibitem[MS03]{McCallumSharifi}
William~G. McCallum and Romyar~T. Sharifi.
\newblock A cup product in the {G}alois cohomology of number fields.
\newblock {\em Duke Math. J.}, 120(2):269--310, 2003.

\bibitem[NSW00]{NSW}
J\"{u}rgen Neukirch, Alexander Schmidt, and Kay Wingberg.
\newblock {\em Cohomology of number fields}, volume 323 of {\em Grundlehren der mathematischen Wissenschaften [Fundamental Principles of Mathematical Sciences]}.
\newblock Springer-Verlag, Berlin, 2000.

\bibitem[OV02]{ochivenjakob}
Yoshihiro Ochi and Otmar Venjakob.
\newblock On the structure of {S}elmer groups over {$p$}-adic {L}ie extensions.
\newblock {\em J. Algebraic Geom.}, 11(3):547--580, 2002.

\bibitem[PR00]{perrinriou}
B.~Perrin-Riou.
\newblock {\em {$p$}-adic {$L$}-functions and {$p$}-adic representations}, volume~3 of {\em SMF/AMS Texts and Monographs}.
\newblock American Mathematical Society, Providence, RI; Soci\'{e}t\'{e} Math\'{e}matique de France, Paris, 2000.
\newblock Translated from the 1995 French original by Leila Schneps and revised by the author.

\bibitem[Qi25]{qi2025iwasawa}
Peikai Qi.
\newblock {\em Iwasawa $\lambda$ Invariants, Massey Products, and Pseudo-Null Modules}.
\newblock PhD thesis, Michigan State University, 2025.

\bibitem[Sha07a]{sharifieisenstein}
Romyar~T. Sharifi.
\newblock Iwasawa theory and the {E}isenstein ideal.
\newblock {\em Duke Math. J.}, 137(1):63--101, 2007.

\bibitem[Sha07b]{SharifiMassey}
Romyar~T. Sharifi.
\newblock Massey products and ideal class groups.
\newblock {\em J. Reine Angew. Math.}, 603:1--33, 2007.

\bibitem[Was82]{introcycfields}
Lawrence~C. Washington.
\newblock {\em Introduction to cyclotomic fields}, volume~83 of {\em Graduate Texts in Mathematics}.
\newblock Springer-Verlag, New York, 1982.

\bibitem[WWE20]{WWE}
Preston Wake and Carl Wang-Erickson.
\newblock The rank of {M}azur's {E}isenstein ideal.
\newblock {\em Duke Math. J.}, 169(1):31--115, 2020.

\end{thebibliography}
\end{document}